\author{Paul \textsc{Poncet}}
\address{CMAP, \'{E}cole Polytechnique, Route de Saclay, 91128 Palaiseau Cedex, France} 
\email{poncet@cmap.polytechnique.fr}
\newcommand{\x}{w_{x}} 
\newcommand{\y}{w_{y}} 
\newcommand{\z}{w_{z}} 
\newcommand{\xoy}{p_{x, y}} 
\newcommand{\yoz}{p_{y, z}} 
\newcommand{\xoz}{p_{x, z}} 
\newtheorem*{theorem*}{Theorem}
\newtheorem{theorem}{Theorem}[section]
\newtheorem{corollary}[theorem]{Corollary}
\newtheorem{proposition}[theorem]{Proposition}
\newtheorem{lemma}[theorem]{Lemma}
\theoremstyle{definition}
\newtheorem{definition}[theorem]{Definition}
\newtheorem{example}[theorem]{Example}
\newtheorem{remark}[theorem]{Remark}
\begin{document}

\title{Partial metrics and \\ normed inverse semigroups}

\date{\today}

\subjclass[2020]{06A12, 
                 20M18, 
                 54E35} 

\keywords{submodular function, partial metric, inverse semigroup, Clifford semigroup, semilattice, normed group, valuation}

\begin{abstract}
Relying on the notions of submodular function and partial metric, we introduce normed inverse semigroups as a generalization of normed groups and sup-semilattices equipped with an upper valuation. 
We define the property of skew-convexity for a metric on an inverse semigroup, and prove that every norm on a Clifford semigroup gives rise to a right-subinvariant and skew-convex metric; it makes the semigroup into a Hausdorff topological inverse semigroup if the norm is cyclically permutable. 
Conversely, we show that every Clifford monoid equipped with a right-subinvariant and skew-convex metric admits a norm for which the metric topology and the norm topology coincide.  
We characterize convergence of nets and show that Cauchy completeness implies conditional monotone completeness with respect to the natural partial order of the inverse semigroup. 
\end{abstract}

\maketitle

\section{Introduction}

Informally, a norm on a set $S$ is a real-valued map on $S$ meant to bring quantitative information on top of the qualitative information already conveyed by the structure preexisting on $S$. 
In addition, one seeks to define a metric from the norm and make some topological information available as well. 

In line with the classical notion of normed vector spaces, norms have been exported to groups: normed groups have proved useful in the study of topological groups, see e.g.\ Bingham and Ostaszewski \cite{Bingham10}, Sarfraz et al.\ \cite{Sarfraz20}. 
In this case, there is an obvious correspondence between norms and \textit{right-invariant} metrics, i.e.\ metrics $d$ such that
\[
d(x - y, z - y) = d(x, z),
\]
for all $x, y, z$. 

On sup-semilattices, equivalently commutative idempotent semigroups, the difference between two elements does not exist; so, given a real-valued map $v$ playing the role of a norm, one cannot write something like $v(x - y)$ to derive a metric. 
To circumvent this hurdle, one can consider an \textit{upper valuation} on a sup-semilattice $(S, \vee)$, defined as 
a map $v : S \to \mathbb{R}_+$ satisfying the \textit{submodularity} condition, i.e.\ such that 
\[
v(x \vee y) + v(z) \leqslant v(x \vee z) + v(z \vee y),
\]
for all $x, y, z \in S$; see e.g.\ Ramana Murty and Engelbert \cite{Ramana85}, Schellekens \cite{Schellekens04}, Simovici \cite{Simovici14}. 
If $S$ is a lattice, this latter condition holds if and only if $v$ is order-preserving (aka non-decreasing) and 
\[
v(x \vee y) + v(x \wedge y) \leqslant v(x) + v(y),
\]
for all $x, y \in S$. 
Given an upper valuation on a sup-semilattice, it is easy to derive a pseudo-metric $d$ as 
\[
d(x, y) = { v(x \vee y) - \frac{v(x) + v(y)}{2} }, 
\]
which becomes a metric if $v$ is increasing. 
Moreover, $d$ is \textit{right-subinvariant} (actually, \textit{subinvariant} since $S$ is commutative), in the sense that
\[
d(x \vee y, z \vee y) \leqslant d(x, z),
\]
for all $x, y, z$, and \textit{radially-convex}, i.e.\ 
\[
d(x, z) = d(x, y) + d(y, z),
\]
for all $x \leqslant y \leqslant z$. 
Submodular maps play an important role in various mathematical areas such as combinatorial optimization (see Fujishige \cite{Fujishige05}) or information theory, where the Shannon entropy, seen as a map defined on the lattice of measurable finite partitions of a probability space, is submodular (see Nakamura \cite[p.~454]{Nakamura70}). 
See also Barth\'{e}l\'{e}my \cite{Barthelemy78}, Monjardet \cite{Monjardet81}, Orum and Joslyn \cite{Orum09} for upper valuations defined on a poset instead of a sup-semilattice. 

Since groups and sup-semilattices are both special kinds of semigroups, it is natural to look for a notion of norm that covers both group norms and sup-semilattice upper valuations. 
Recently, extensions of norms to semigroups have been considered, see e.g.\ Krishnachandran \cite{Krishnachandran15}, Dikranjan and Giordano Bruno \cite{Dikranjan19}. 
However, these authors mimic the axioms of a group norm, which notably comprise subadditivity, without including any axiom of submodularity flavor. 
Thus, their definition does not encompass upper valuations, and is of limited practical usage for sup-semilattices; especially, deriving a metric from a semigroup norm in Krishnachandran's sense is not obvious. 


In this paper, our goal is to fill this gap. 
Instead of tackling semigroups in their full generality, we focus on inverse semigroups, in the continuation of previous work undertaken by the author in \cite{Poncet11} and \cite{Poncet12a}. 
Recall that an \textit{inverse semigroup} is a semigroup $(S, +)$ such that, for all $x \in S$, there exists a unique $x^* \in S$ with $x + x^* + x = x$ and $x^* + x + x^* = x^*$. 
To reach our target and give an adequate notion of norm on inverse semigroups, we rely on the concept of partial metric and partial metric space introduced by Matthews \cite{Matthews92}. 
A partial metric $p$ is similar to a metric, except that self-distances $p(x, x)$ are not necessarily zero, yet are kept `small' with the condition $p(x, x) \leqslant p(x, y)$, for all $x, y$; also, subadditivity is replaced by a condition of submodularity that generalizes submodularity of maps on sup-semilattices. 
Indeed, given a set $S$, a map $p : S \times S \to \mathbb{R}$ is \textit{submodular} on $S$ if
\[
p(x, y) + p(z, z) \leqslant p(x, z) + p(z, y),
\]
for all $x, y, z \in S$. 
With this definition, a group norm $v$ induces a submodular map $(x, y) \mapsto v(x - y)$ (if the group is denoted additively), and a sup-semilattice upper valuation $v$ induces a submodular map $(x, y) \mapsto v(x \vee y)$. 
After Matthews, partial metric spaces have been further studied as a generalization of metric spaces, notably for extending various fixed point theorems, see e.g.\ Valero \cite{Valero05}, Samet et al.\ \cite{Samet13}, Han et al.\ \cite{Han17}, Bugajewski and Wang \cite{Bugajewski20}. 
They have also been applied to domain theory, the branch of mathematics originally developed for specifying denotational semantics in theoretical computer science; see the monograph by Gierz et al.\ \cite{Gierz03}, and see e.g.\ Waszkiewicz \cite{Waszkiewicz03} on correspondences between partial metrics and Martin's measurements on a continuous poset. 


The main result of this paper relies on the notion of skew-convex metric, which generalizes the property of radial-convexity and will be properly defined in Section~\ref{sec:clifford}. 
In a slightly simplified version, it goes as follows:

\begin{theorem*}
Let $(S, +, 0)$ be an inverse monoid such that $x + x^* = x^* + x$, for all $x \in S$ (i.e., $S$ is a Clifford monoid). 
If $\| \cdot \|$ is a norm on $S$, then the map $d$ defined on $S \times S$ by 
\[
d(x, y) = { \| x + y^* \| - \frac{\| x + x^* \| + \| y + y^* \| }{2} }, 
\]
for all $x, y \in S$, is a right-subinvariant and skew-convex metric on $S$. 
Moreover, if $\| \cdot \|$ is cyclically-permutable, in the sense that $\| x + y \| = \| y + x \|$, for all $x, y \in S$, then $S$ is a Hausdorff topological inverse semigroup with respect to the topology induced by $d$ (called the norm topology), and a monotone net $(x_n)_n$ in $S$ converges if and only if $\{ \| x_n \| \}_n$ is bounded. 
In addition, if $S$ is Cauchy complete for $d$, then $S$ is conditionally monotone-complete, and the natural partial order of the inverse semigroup $S$ is compatible with the norm topology. 
Conversely, if $d$ is a right-subinvariant, skew-convex metric on $S$, then the map $\| \cdot \|$ defined on $S$ by 
\[
\| x \| := d(x, 0) + d(x + x^*, 0),
\]
for all $x \in S$, is a norm on $S$ such that the metric topology induced by $d$ and the norm topology coincide. 
\end{theorem*}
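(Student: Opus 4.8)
The plan is to route every computation through the partial metric $p(x,y):=\|x+y^*\|$. The norm axioms recalled earlier say precisely that $p$ is a partial metric: symmetry amounts to $\|u\|=\|u^*\|$, the small-self-distance condition to $\|x+x^*\|\le\|x+y^*\|$, and submodularity to $\|\alpha+\beta^*\|+\|\gamma+\gamma^*\|\le\|\alpha+\gamma^*\|+\|\gamma+\beta^*\|$. Hence, by the established passage from a partial metric to its induced metric, $d(x,y)=p(x,y)-\tfrac12\big(p(x,x)+p(y,y)\big)$ is a metric, its nonnegativity and triangle inequality being exactly the small-self-distance and submodularity conditions. For right-subinvariance I use that $x+x^*$ is a central idempotent in a Clifford monoid, so that $d(x+w,z+w)$ depends on $w$ only through the idempotent $e:=w+w^*$; setting $a:=x+z^*$, $f:=x+x^*$, $g:=z+z^*$ (and noting $a+a^*=f+g$), the inequality $d(x+w,z+w)\le d(x,z)$ reduces to $2\|a+e\|+\|f\|+\|g\|\le 2\|a\|+\|f+e\|+\|g+e\|$. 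Applying submodularity with $(\alpha,\beta,\gamma)=(x+e,z,x)$ and with $(\alpha,\beta,\gamma)=(x,z+e,z)$ gives $\|a+e\|+\|f\|\le\|f+e\|+\|a\|$ and $\|a+e\|+\|g\|\le\|a\|+\|g+e\|$ respectively; adding these yields the claim.

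Skew-convexity (Section~\ref{sec:clifford}) I verify by the same bookkeeping: expand every $d$ through $p=\|\cdot+\cdot^*\|$, cancel the self-distance terms, and reduce the required relation to an (in)equality among norms of elements that differ only by a central idempotent, which again follows from submodularity and the identity $a+a^*=f+g$. Assuming now that $\|\cdot\|$ is cyclically-permutable, the metric becomes left-subinvariant as well: a cyclic shift gives $\|w+(x+z^*)+w^*\|=\|(x+z^*)+e\|$, so the left-hand computation collapses onto the right-hand one already treated. Joint continuity of $+$ then follows from $d(x+y,x'+y')\le d(x+y,x'+y)+d(x'+y,x'+y')\le d(x,x')+d(y,y')$, while inversion is an isometry because $\|u\|=\|u^*\|$ together with cyclic-permutability yields $d(x^*,y^*)=d(x,y)$. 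As $d$ is a metric the topology is Hausdorff, so $S$ is a Hausdorff topological inverse semigroup.

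For a monotone net $(x_t)_t$, continuity of $\|\cdot\|$ and monotonicity of the real net $t\mapsto\|x_t\|$ show that if $(x_t)_t$ converges then $\{\|x_t\|\}_t$ is bounded. Conversely, skew-convexity controls the distances $d(x_s,x_t)$ between comparable terms by the increments of the monotone real net $t\mapsto\|x_t\|$, so boundedness of $\{\|x_t\|\}_t$ forces $(x_t)_t$ to be $d$-Cauchy and hence to converge. This is exactly the mechanism behind the completeness clause: if $S$ is Cauchy complete, a conditionally bounded monotone net is Cauchy, so it converges to some $x_\infty$; and since the relation $x\le y$ is equivalent to the closed condition $x=x+x^*+y$ (both sides continuous and $S$ Hausdorff), the natural partial order is closed in $S\times S$. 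Passing to the limit then identifies $x_\infty$ as the least upper bound, proving that $S$ is conditionally monotone-complete and that the order is compatible with the norm topology.

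For the converse I set $\|x\|:=d(x,0)+d(x+x^*,0)$ and check the norm axioms directly from right-subinvariance and skew-convexity. The decisive point is that the metric $d'$ induced by this norm, namely $d'(x,y)=\|x+y^*\|-\tfrac12\big(\|x+x^*\|+\|y+y^*\|\big)$, coincides with $d$. I would establish $d'=d$ by using right-subinvariance to trade the terms $\|x+y^*\|$ for distances of the form $d(\cdot,\cdot)$ and then skew-convexity to split each distance through the appropriate idempotent-shifted intermediate element, the combination collapsing to $d(x,y)$; the group case ($x+x^*=0$) and the semilattice case ($x^*=x$) already display $d'=d$ and guide the general identity. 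Once $d'=d$, the metric topology and the norm topology are literally the same. I expect this converse identity to be the main obstacle: whereas the forward direction is driven entirely by submodularity, here skew-convexity must be used at full strength to reconstruct the metric from the scalar $\|\cdot\|$, and matching the intermediate elements of the skew-convex decomposition to the terms produced by subinvariance is the delicate step.
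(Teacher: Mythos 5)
Your converse is the fatal gap: the ``decisive point'' you rely on, namely that the metric $d'$ induced by the reconstructed norm coincides with $d$, is false, and the sup-semilattice case — far from displaying $d'=d$ as you assert — is exactly where it breaks. In a sup-semilattice $\delta x = x$, so your norm is $\|x\| = 2d(x,0)$, and radial convexity gives $d'(x,y) = \|x\vee y\| - \tfrac12(\|x\|+\|y\|) = d(x\vee y,x) + d(x\vee y,y)$, which dominates $d(x,y)$ by the triangle inequality but need not equal it. Concretely, take $S=\{0,a,b,\top\}$ with $a,b$ incomparable and $a\vee b=\top$, and set $d(0,a)=d(0,b)=d(a,\top)=d(b,\top)=1$, $d(0,\top)=2$, $d(a,b)=\tfrac32$. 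One checks directly that $d$ is a metric, right-subinvariant ($d(x\vee w,z\vee w)\leqslant d(x,z)$ holds in all cases, e.g.\ $d(\top,b)=1\leqslant d(a,0)$ and $d(a,\top)=1\leqslant d(a,b)$), and skew-convex (for semilattices this is radial convexity, and the only nontrivial chains are $0\leqslant a\leqslant\top$ and $0\leqslant b\leqslant\top$, where $2=1+1$). Yet $d'(a,b) = \|\top\| - \tfrac12(\|a\|+\|b\|) = 4-2 = 2 \neq \tfrac32 = d(a,b)$. So no amount of trading subinvariance against skew-convexity will collapse $d'$ onto $d$; only the \emph{topologies} coincide, which is what Theorem~\ref{thm:exist} actually proves. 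The paper's route is genuinely different: it replaces $d$ by the equivalent metric $d'(x,y):=d(x,y)+d(\delta x,\delta y)$ so that $v(x)=d'(x,0)$, verifies the three pseudo-norm axioms one by one (submodularity is the delicate one, requiring the auxiliary element $t:=x+z^*+z$ and skew-convexity applied along $0\leqslant_S \delta z\leqslant_S \delta t$), and then matches $d$-convergence against the norm-convergence criterion of Theorem~\ref{thm:dist1} ($v(x_n+x^*)\to v(\delta x)$ and $v(\delta x_n)\to v(\delta x)$) via the estimates $d'(\delta x,x+x_n^*) = v(x_n+x^*)-v(\delta x)$ (skew-convexity) and $d'(\delta x_n,\delta x)\leqslant 2v(x_n+x^*)-v(\delta x_n)-v(\delta x)$ (radial convexity). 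None of this is recoverable from your sketch.

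The forward direction also has a real, if smaller, gap: your argument only produces a \emph{pseudo}-metric. Nonnegativity and the triangle inequality do follow from the pseudo-norm axioms exactly as you say, but $d(x,y)=0\Rightarrow x=y$ requires the separation axiom of the norm ($\|x\|_e=0\Rightarrow x=e$ on $S_e$), which you never invoke. The paper's mechanism (Theorem~\ref{thm:pseudonorm}) is the nontrivial implication that $\leq_p$ coincides with $\leqslant_S$: from $d(x,y)=0$ one gets $w(x)=p(x,y)=w(y)$; then with $e:=y+y^*$, weak permutability (automatic for Clifford) gives $\|y+x^*+e\|=\|e\|$, the norm axiom forces $y+x^*+e=e$, hence $x\leqslant_S y$, and symmetrically $y\leqslant_S x$. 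Relatedly, the continuity of $\|\cdot\|$, which you use for ``converges $\Rightarrow$ bounded,'' is not Lipschitz-obvious ($|\,\|x\|-\|y\|\,|$ is \emph{not} controlled by $d(x,y)$ in general); the paper proves it in Corollary~\ref{coro:topo} via two submodularity estimates sandwiching $\|x_n\|$ between $\|x\|+w(x_n)-p(x,x_n)$ and $p(x_n,x^*+x)$. (Also a minor slip: $x\leqslant_S y$ is equivalent to $y=y+y^*+x$, not to $x=x+x^*+y$, which characterizes the reverse inequality — though your closedness-of-order argument survives the correction.) The parts you did carry out — right-subinvariance by two applications of submodularity using centrality of idempotents, the skew-convexity identity, joint continuity from right- plus left-subinvariance under cyclic permutability, and the Cauchy estimate $d(x_m,x_n)\leqslant |w(x_n)-w(x_m)|$ with $w=\|\delta\,\cdot\,\|$ — are sound and parallel the paper's Propositions~\ref{prop:propd} and~\ref{prop:dclifford}, Corollary~\ref{coro:topo}, and Theorem~\ref{thm:cvi}.
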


Our motivation comes from the search for a unifying framework of both classical and idempotent analysis, as explained in \cite{Poncet11}. 
Idempotent analysis is a well established theory dating back to Zimmermann \cite{Zimmermann77} and popularized by Maslov \cite{Maslov87}; see also Kolokoltsov and Maslov \cite{Kolokoltsov89a},  \cite{Kolokoltsov89b}, Litvinov \cite{Litvinov11}. 
For such a program to be meaningful, one needs to dispose of tools such as a unified notion of Bochner integral, which would integrate measurable $M$-valued functions, for some ``unified space'' $M$ encompassing both topological vector spaces and topological idempotent semimodules. 
We expect such a space to be equipped with an adequate notion of norm making it into a Cauchy complete topological space.

The paper is organized as follows. 
Section~\ref{sec:posets} is a reminder of some usual definitions and notations from poset theory. 
In Section~\ref{sec:sub}, we recall the notion of submodular map and give some first properties. 
Inspired by Topkins \cite{Topkis78}, we show how to construct new submodular maps from a given one. 
In Section~\ref{sec:interlaced}, we introduce \textit{interlaced spaces} as a generalization of Matthews' notion of partial metric space. 
We show how an interlaced space can be made into a metric space and equipped with a partial order; we characterize convergence of nets, and provide various other properties that lay the ground for Section~\ref{sec:is}. 
Section~\ref{sec:dist2} focuses on a specific additional metric that can be built on a partial metric space; the triangle inequality property, which is not obvious, is proved with care. 
In Section~\ref{sec:is}, we introduce normed inverse semigroups, apply the results of the previous sections, and prove the first part of our main result. 
In Section~\ref{sec:clifford}, we define the notion of skew-convex metric, show that skew-convexity implies radial-convexity, and prove that, in the special case of Clifford monoids, every right-subinvariant, skew-convex metric gives rise to a norm for which the metric topology and the norm topology coincide.

\section{Reminders on posets and notations}\label{sec:posets}

\subsection{Qosets and posets}

A \textit{quasiordered set} or \textit{qoset} $(P,\leqslant)$ is a set $P$ together with a reflexive and transitive binary relation $\leqslant$. 
If in addition $\leqslant$ is antisymmetric, then $(P, \leqslant)$ is a \textit{partially ordered set} or \textit{poset}. 
Let $P$ be a qoset and $A \subseteq P$. 
An \textit{upper bound} of $A$ is an element $x \in P$ such that $a \leqslant x$ for all $a \in A$. 
\textit{Lower bounds} are defined dually. 
We write $A^{\uparrow}$ (resp.\ $A^{\downarrow}$) for the set of upper bounds (resp.\ lower bounds) of $A$. 
The subset $A$ is \textit{bounded} if both $A^{\uparrow}$ and $A^{\downarrow}$ are nonempty. 
If $x \in P$, then $\{ x \}^{\uparrow}$ (resp.\ $\{ x \}^{\downarrow}$) is the \textit{principal filter} (resp.\ the \textit{principal ideal}) generated by $x$. 
A \textit{supremum} (or \textit{sup}) of $A$ is any element of $A^{\uparrow} \cap (A^{\uparrow})^{\downarrow}$; if the quasiorder is a partial order, then a sup, if it exists, is necessarily unique, and then denoted by $\bigvee A$. 
An \textit{infimum} (or \textit{inf}) of $A$ is defined dually. 
A nonempty subset $D$ of $P$ is \textit{directed} if, for all $d, d' \in D$, there exists some $d'' \in D$ such that $d \leqslant d''$ and $d' \leqslant d''$. 
A map $f : P \to P'$ between qosets $P, P'$ is \textit{order-preserving} if $f(x) \leqslant f(y)$ whenever $x \leqslant y$. 
A \textit{sup-semilattice} (resp.\ a \textit{lattice}) is a poset in which every pair $\{ x, y \}$ has a sup (resp.\ a sup and an inf). 
The sup (resp.\ the inf) of $\{ x, y \}$ is denoted by $x \vee y$ (resp.\ by $x \wedge y$) if it exists. 

\subsection{Other notations}

We write $\mathbb{R}$ (resp.\ $\mathbb{R}_+$) for the set of real numbers (resp.\ nonnegative real numbers).

\section{Submodularity}\label{sec:sub}

\subsection{Submodular maps}

Given a set $X$, let $p : X \times X \to { \mathbb{R} \cup \{ -\infty \} }$. 
The map $p$ is said to be \textit{submodular} if
\[
p(x, y) + p(z, z) \leqslant p(x, z) + p(z, y),
\]
for all $x, y, z \in X$, and \textit{symmetric} if $p(x, y) = p(y, x)$, for all $x, y \in X$. 
We customary denote by $w_p$ (or $w$ if the context is clear) the map $x \mapsto p(x, x)$. 
We say that $p$ (resp.\ $w_p$) is \textit{finite} if it never takes the value $-\infty$.

\begin{remark}\label{rk:bart}
Following Barth\'{e}l\'{e}my \cite{Barthelemy78}, we define an \textit{upper valuation} on a directed qoset $P$ as an order-preserving map $w : P \to { \mathbb{R} \cup \{ -\infty \} }$ such that $p$ is submodular, where $p$ is the symmetric map $P \times P \to { \mathbb{R} \cup \{ -\infty \} }$ defined by 
\begin{equation}\label{eq:wplus}
p(x, y) = \inf \{ w(z) : x \leqslant z, y \leqslant z \}, 
\end{equation}
for all $x, y \in P$. 
See also Monjardet \cite{Monjardet81}, Orum and Joslyn \cite{Orum09}. 
\end{remark}

\begin{example}
Let $f, g : X \to { \mathbb{R} \cup \{ -\infty \} }$. 
Then $(x, y) \mapsto f(x)$, $(x, y) \mapsto g(y)$, and $(x, y) \mapsto f(x) + g(y)$ are submodular, and $(x, y) \mapsto f(x) \vee f(y)$ is symmetric submodular.
\end{example}

\begin{example}\label{ex:topkins}
Let $n$ be a positive integer and $\alpha$ be a real number $\geqslant 1$. 
Then the map $\mathbb{R}_+^n \times \mathbb{R}_+^n \to \mathbb{R}_+, (x, y) \mapsto \sum_{i=1}^n (x_i \vee y_i)^{\alpha}$ is finite and submodular, see Topkis \cite[Theorem~3.3]{Topkis78}. 
\end{example}

\begin{lemma}\label{lem:firstprops2}
If $p$ is a submodular map on $X$, then
\[
w_p(x) + w_p(y) \leqslant p(x, y) + p(y, x),
\]
for all $x, y \in X$, so that $p$ is finite if and only if $w_p$ is finite. 
If moreover $p$ is symmetric, then 
\[
w_p(x) + w_p(y) \leqslant 2 p(x, y),
\]
for all $x, y \in X$. 
\end{lemma}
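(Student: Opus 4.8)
The plan is to derive both inequalities directly from the defining submodularity condition by a single well-chosen substitution, and to treat the finiteness statement as an immediate corollary of the first inequality.

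First I would prove the inequality $w_p(x) + w_p(y) \leqslant p(x, y) + p(y, x)$. The submodularity condition reads $p(a, b) + p(c, c) \leqslant p(a, c) + p(c, b)$ for all $a, b, c \in X$. Specializing to $a = b = x$ and $c = y$ yields $p(x, x) + p(y, y) \leqslant p(x, y) + p(y, x)$, which is exactly the desired inequality by definition of $w_p$. This is the only real computation, and it is immediate once the right substitution is spotted; I do not anticipate any genuine obstacle here, only the small puzzle of choosing the two outer arguments equal and the middle argument to be the second point.

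Next I would establish the equivalence of finiteness. The implication that $p$ finite forces $w_p$ finite is trivial, since $w_p(x) = p(x, x)$. For the converse, suppose $w_p$ is finite and fix $x, y \in X$; then $w_p(x) + w_p(y)$ is a genuine real number, and the inequality just proved gives $p(x, y) + p(y, x) \geqslant w_p(x) + w_p(y) > -\infty$. Because the codomain of $p$ is $\mathbb{R} \cup \{ -\infty \}$ and any sum involving $-\infty$ would itself equal $-\infty$, neither $p(x, y)$ nor $p(y, x)$ can be $-\infty$; hence $p$ is finite.

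Finally, for the symmetric case I would substitute $p(y, x) = p(x, y)$ into the first inequality, collapsing its right-hand side to $2 p(x, y)$ and yielding $w_p(x) + w_p(y) \leqslant 2 p(x, y)$. The whole argument is elementary; the key insight is the specialization $a = b = x$, $c = y$ in the submodularity axiom, after which the finiteness dichotomy and the symmetric refinement follow with no further work.
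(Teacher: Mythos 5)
Your proof is correct and uses exactly the same substitution as the paper (setting the two outer arguments of the submodularity inequality equal), with the finiteness and symmetry consequences spelled out in the obvious way. No issues.
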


\begin{proof}
Simply apply the submodularity condition above with $x = y$. 
\end{proof}

\begin{lemma}\label{lem:firstprops1}
If $p$ is a finite submodular map on $X$, then it is symmetric if and only if 
\[
p(x, y) + p(z, z) \leqslant p(x, z) + p(y, z),
\]
for all $x, y, z \in X$.  
\end{lemma}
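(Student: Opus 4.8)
The plan is to prove the two implications separately, each by a single substitution into the relevant inequality.

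For the forward direction, I would assume that $p$ is symmetric. Then $p(z, y) = p(y, z)$ for all $y, z \in X$, so substituting this equality into the submodularity condition $p(x, y) + p(z, z) \leqslant p(x, z) + p(z, y)$ (which holds by hypothesis) turns it verbatim into the displayed inequality $p(x, y) + p(z, z) \leqslant p(x, z) + p(y, z)$. No computation beyond this replacement is needed.

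For the converse, I would assume the displayed inequality holds for all $x, y, z \in X$. The key move is to specialize the third variable to the first: setting $z = x$ yields $p(x, y) + p(x, x) \leqslant p(x, x) + p(y, x)$. This is exactly where finiteness enters: since $p$ never takes the value $-\infty$, the self-distance $p(x, x)$ is a genuine real number and may be cancelled from both sides, producing $p(x, y) \leqslant p(y, x)$. As this holds for every pair, exchanging the roles of $x$ and $y$ gives the reverse inequality $p(y, x) \leqslant p(x, y)$, and hence $p(x, y) = p(y, x)$, i.e.\ $p$ is symmetric.

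I do not expect a genuine obstacle here; the only point requiring care is the cancellation step, which is precisely where the finiteness hypothesis is used and without which the statement would fail. It is worth noting that the converse direction relies only on the displayed inequality together with finiteness, and not on the original submodularity of $p$; the submodularity hypothesis is what supplies the inequality used in the forward direction.
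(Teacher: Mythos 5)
Your proof is correct and follows essentially the same route as the paper: the forward direction is immediate from symmetry substituted into submodularity, and the converse sets $z = x$ and cancels $p(x,x)$ to get $p(x,y) \leqslant p(y,x)$, then swaps $x$ and $y$. Your explicit remark that finiteness is what licenses the cancellation is a useful clarification the paper leaves implicit, but it is the same argument.
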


\begin{proof}
The `only if' part is clear. 
Now suppose that $p$ satisfies the inequality of the lemma; then, with $z = x$, we obtain $p(x, y) + p(x, x) \leqslant p(x, x) + p(y, x)$, hence $p(x, y) \leqslant p(y, x)$, and similarly $p(y, x) \leqslant p(x, y)$, so that $p(x, y) = p(y, x)$. 
\end{proof}

To any map $p : X \times X \to \mathbb{R} \cup \{ -\infty \}$ we associate the quasiorder $\leq_p$ on $X$ defined by $x \leq_p y$ if $p(x, x) \leqslant p(y, y)$ and $p(x, z) \leqslant p(y, z)$, for all $z \in X$. 

\begin{lemma}\label{lem:leq}
Let $p$ be a finite, submodular map on $X$. 
Consider $X$ as a qoset endowed with the quasiorder $\leq_p$. 
Then the maps $w_p : x \mapsto p(x, x)$ and $x \mapsto p(x, z)$ ($z \in X$) are order-preserving.  
Moreover, $x \leq_p y$ if and only if $w_p(x) \vee p(x, y) \leqslant w_p(y)$, and $\leq_p$ is a partial order if and only if 
\begin{equation}\label{eq:partial}
{ w_p(x) = p(x, y) = p(y, x) = w_p(y) } \Rightarrow { x = y },
\end{equation}
for all $x, y \in X$.
\end{lemma}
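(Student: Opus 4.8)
The plan is to prove Lemma~\ref{lem:leq} in four stages, handling each assertion in the order it is stated, since the later parts build on the earlier ones.

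\textbf{Order-preservation.} First I would verify that $w_p$ and the partial maps $x \mapsto p(x,z)$ are order-preserving, but this is immediate from the very definition of $\leq_p$: if $x \leq_p y$ then by definition $p(x,x) \leqslant p(y,y)$, i.e.\ $w_p(x) \leqslant w_p(y)$, and $p(x,z) \leqslant p(y,z)$ for every $z \in X$, which is exactly the assertion that $x \mapsto p(x,z)$ preserves order. So this stage requires no real work beyond unwinding notation.

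\textbf{The characterization $x \leq_p y \iff w_p(x) \vee p(x,y) \leqslant w_p(y)$.} This is the crux. The forward direction is easy: assuming $x \leq_p y$ gives $w_p(x) \leqslant w_p(y)$ directly, and specializing the inequality $p(x,z) \leqslant p(y,z)$ to $z = y$ yields $p(x,y) \leqslant p(y,y) = w_p(y)$, so $w_p(x) \vee p(x,y) \leqslant w_p(y)$. The reverse direction is where submodularity does the real work. Assuming $w_p(x) \vee p(x,y) \leqslant w_p(y)$, I need to recover the full family of inequalities $p(x,z) \leqslant p(y,z)$ for arbitrary $z$. The natural move is to apply the submodularity condition $p(x,z) + p(y,y) \leqslant p(x,y) + p(y,z)$ (taking the middle variable to be $y$); this rearranges to $p(x,z) - p(y,z) \leqslant p(x,y) - w_p(y) \leqslant 0$ using the hypothesis $p(x,y) \leqslant w_p(y)$ and the finiteness of $p$ (guaranteed by Lemma~\ref{lem:firstprops2}, which lets me subtract the finite quantity $w_p(y)$ freely). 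Together with $w_p(x) \leqslant w_p(y)$ this yields $x \leq_p y$. The main obstacle is simply choosing the right instance of submodularity; the finiteness hypothesis is what makes the subtraction legitimate, so I must invoke Lemma~\ref{lem:firstprops2} explicitly.

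\textbf{The partial-order characterization.} Since $\leq_p$ is reflexive and transitive by construction, being a partial order is equivalent to antisymmetry. So I would show that antisymmetry is equivalent to condition~\eqref{eq:partial}. For the ``$\Leftarrow$'' direction, suppose \eqref{eq:partial} holds and $x \leq_p y$ together with $y \leq_p x$; using the characterization just proved, $x \leq_p y$ gives $p(x,y) \leqslant w_p(y)$ and $w_p(x) \leqslant w_p(y)$, while $y \leq_p x$ gives the symmetric inequalities, and combining them forces $w_p(x) = w_p(y)$ as well as $p(x,y) = p(y,x) = w_p(x) = w_p(y)$ (here I use $w_p(x) \leqslant p(x,y)$, which follows from Lemma~\ref{lem:firstprops2}); then \eqref{eq:partial} yields $x = y$. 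For the ``$\Rightarrow$'' direction, if the equalities in the hypothesis of \eqref{eq:partial} hold, then the characterization immediately gives both $x \leq_p y$ and $y \leq_p x$, so antisymmetry forces $x = y$. I expect this last stage to be routine once the second characterization is in hand; the only subtlety is remembering to pull $w_p(x) \leqslant p(x,y)$ out of Lemma~\ref{lem:firstprops2} to pin down all four quantities as equal.
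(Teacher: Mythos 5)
Your proposal follows the paper's proof almost step for step: the same unwinding of definitions for order-preservation, the same instance of submodularity (middle variable $y$) for the reverse direction of the characterization, and the same reduction of the partial-order claim to antisymmetry via the characterization. One justification, however, is wrong as written. In the antisymmetry step you claim that $w_p(x) \leqslant p(x,y)$ ``follows from Lemma~\ref{lem:firstprops2}''; it does not. For a general (possibly non-symmetric) finite submodular map, that lemma only gives the summed inequality $w_p(x) + w_p(y) \leqslant p(x,y) + p(y,x)$, and the pointwise bound $w_p(x) \leqslant p(x,y)$ is false in general (take $p(x,y) = g(y)$ for a non-constant $g$, which is submodular with equality). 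The inequality you need is nevertheless true \emph{in context}: it comes from the hypothesis $y \leq_p x$ itself, namely $p(y,z) \leqslant p(x,z)$ specialized at $z = y$ gives $w_p(y) \leqslant p(x,y)$, and likewise $x \leq_p y$ at $z = x$ gives $w_p(x) \leqslant p(y,x)$ --- this is exactly what the paper does. Alternatively, you can combine the summed inequality from Lemma~\ref{lem:firstprops2} with the two upper bounds $p(x,y) \leqslant w_p(y)$ and $p(y,x) \leqslant w_p(x)$ to force all four quantities to be equal. Either repair is one line, but the citation as you wrote it would not survive refereeing. (A smaller quibble of the same kind: the finiteness of $p$ used to cancel terms is a hypothesis of the lemma itself, not a consequence of Lemma~\ref{lem:firstprops2}.)
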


\begin{proof}
It is clear from the definitions that the maps $w_p : x \mapsto p(x, x)$ and $x \mapsto p(x, z)$ ($z \in X$) are order-preserving. 

Let $x, y \in X$. 
If $x \leq_p y$, then $w_p(x) \leqslant w_p(y)$ and $p(x, y) \leqslant p(y, y) = w_p(y)$, so that 
\begin{equation}\label{eq:partial2}
w_p(x) \vee p(x, y) \leqslant w_p(y).
\end{equation}
Conversely, suppose that \eqref{eq:partial2} holds. 
Then $w_p(x) \leqslant w_p(y)$ on the one hand. 
On the other hand, if $z \in X$, then $p(x, z) + p(x, y) \leqslant p(x, z) + w_p(y) \leqslant p(x, y) + p(y, z)$, hence $p(x, z) \leqslant p(y, z)$. 
So $x \leq_p y$. 

Now assume that Condition~\eqref{eq:partial} is satisfied, 
and suppose that $x \leq_p y$ and $y \leq_p x$. 
Then $w_p(x) \leqslant w_p(y)$, $p(x, y) \leqslant w_p(y)$, $w_p(y) \leqslant w_p(x)$, $p(y, x) \leqslant w_p(x)$. 
Moreover, $x \leq_p y$ also implies $w_p(x) \leqslant p(y, x)$, and $y \leq_p x$ implies $w_p(y) \leqslant p(x, y)$. 
All in all, we obtain $w_p(x) = w_p(y) = p(x, y) = p(y, x)$. 
So $x = y$. 

Conversely, assume that $\leq_p$ is a partial order, and suppose that $w_p(x) = p(x, y) = p(y, x) = w_p(y)$. 
Then $x \leq_p y$ and $y \leq_p x$, hence $x = y$. 
\end{proof}

\begin{remark}[Remark~\ref{rk:bart} continued]\label{rk:stable}
Let $(P, \leqslant)$ be a qoset and $w : P \to { \mathbb{R} \cup \{ -\infty \} }$ be an upper valuation on $P$. 
It is easily proved that the quasiorder $\leqslant$  is contained in $\leq_p$ and can be replaced by $\leq_{p}$ in Equation~\eqref{eq:wplus} of Remark~\ref{rk:bart}, i.e.\ 
\begin{equation}\label{eq:stable}
p(x, y) = \inf \{ w(z) : x \leq_{p} z, y \leq_{p} z \},
\end{equation}
for all $x, y \in P$. 
Following Waszkiewicz \cite[Definition~3.3]{Waszkiewicz01}, we say that a symmetric submodular map $p$ is \textit{stable} if Equation~\eqref{eq:stable} holds with $w := w_p$. 
Then we have a one-to-one correspondence between (finite) upper valuations and (finite) stable symmetric submodular maps. 
Moreover, a stable symmetric submodular map $p$ satisfies 
$
w(x) \leqslant p(x, y)
$, 
for all $x, y$; this implies that $p$ is necessarily a \textit{partial pseudo-metric} (see the definition in Section~\ref{subsec:ppm}) if $w$ takes only non-negative real values. 
\end{remark}

\subsection{Constructing new submodular maps}

A first way to construct new submodular maps is the following. 

\begin{proposition}
If $p$ is a submodular map on $X$, 
then the map 
\[
q : (x, y) \mapsto { p(x, x) \vee p(x, y) }
\]
is submodular. 
Moreover, $w_q$ agrees with $w_p$, and $\leq_q$ agrees with $\leq_p$. 
\end{proposition}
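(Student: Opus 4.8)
The plan is to establish the three assertions in the order they are stated, handling the self-distance identity first, then submodularity, and finally the agreement of the two quasiorders.

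First I would record that $w_q$ agrees with $w_p$: by definition $w_q(x) = q(x,x) = w_p(x) \vee p(x,x) = w_p(x)$, since $p(x,x) = w_p(x)$. This is immediate, and it will be used in both remaining steps.

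Next, to prove that $q$ is submodular, I would write out the inequality to be shown, namely
\[
\bigl( w_p(x) \vee p(x,y) \bigr) + w_p(z) \leqslant \bigl( w_p(x) \vee p(x,z) \bigr) + \bigl( w_p(z) \vee p(z,y) \bigr),
\]
using $q(z,z) = w_p(z)$. I would then split according to which term realizes the maximum $w_p(x) \vee p(x,y)$ on the left. If it is $w_p(x)$, the left-hand side equals $w_p(x) + w_p(z)$, which is dominated by the right-hand side because each summand there is at least $w_p(x)$, respectively $w_p(z)$. If instead it is $p(x,y)$, the left-hand side equals $p(x,y) + p(z,z)$, and submodularity of $p$ gives $p(x,y) + p(z,z) \leqslant p(x,z) + p(z,y)$, which is in turn bounded by the right-hand side since $p(x,z) \leqslant w_p(x) \vee p(x,z)$ and $p(z,y) \leqslant w_p(z) \vee p(z,y)$. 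The only genuine input here is the submodularity of $p$, used once.

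Finally, for the identity $\leq_q\,=\,\leq_p$, the cleanest route is to invoke Lemma~\ref{lem:leq} for both maps. Applied to $q$, it gives $x \leq_q y$ if and only if $w_q(x) \vee q(x,y) \leqslant w_q(y)$; substituting $w_q = w_p$ and $q(x,y) = w_p(x) \vee p(x,y)$ collapses this to $w_p(x) \vee p(x,y) \leqslant w_p(y)$, which by Lemma~\ref{lem:leq} applied to $p$ is exactly $x \leq_p y$. Alternatively, one can argue directly: the defining conditions yield $\leq_p\,\subseteq\,\leq_q$ at once by monotonicity of the join, while for the reverse inclusion, taking $z = y$ shows $p(x,y) \leqslant w_p(y)$, after which submodularity of $p$ in the form $p(x,z) + w_p(y) \leqslant p(x,y) + p(y,z)$ gives $p(x,z) \leqslant p(y,z)$ for every $z$. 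I expect the main obstacle to be the bookkeeping in the submodularity step — ensuring the case split on the left-hand maximum is exhaustive and that each branch is dominated term by term — together with the care needed if $p$ is permitted to take the value $-\infty$, in which case the direct argument (or Lemma~\ref{lem:leq}, which presupposes finiteness) must be adapted.
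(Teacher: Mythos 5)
Your argument is correct: the identity $w_q = w_p$ is immediate, the two-case split on which term realizes $p(x,x)\vee p(x,y)$ cleanly yields submodularity of $q$, and either route to $\leq_q\,=\,\leq_p$ (via Lemma~\ref{lem:leq} in the finite case, or the direct cancellation argument) goes through. The paper dismisses this proof as straightforward and gives no details, so your write-up simply supplies the natural verification the author omits; your flag about adapting the last step when $p$ takes the value $-\infty$ is a reasonable extra precaution.
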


\begin{proof}
The proof is straightforward. 
\end{proof}

\begin{proposition}
If $p$ is a submodular map on $X$, 
then the maps $(x, y) \mapsto f(x) + p(x, y)$ and $(x, y) \mapsto p(x, y) + f(y)$ are submodular, 
for every map $f : X \to \mathbb{R}$. 
In particular, if $p$ is finite, then $(x, y) \mapsto p(x, y) - p(x, x)$ is submodular. 
\end{proposition}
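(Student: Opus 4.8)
The plan is to verify the submodularity inequality directly from the definition for each of the two maps, by substituting them into the condition and observing that the extra terms contributed by $f$ cancel between the two sides. The whole argument is algebraic bookkeeping, so I would aim to make the cancellations explicit.

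First I would treat the map $q : (x, y) \mapsto f(x) + p(x, y)$. To establish $q(x, y) + q(z, z) \leqslant q(x, z) + q(z, y)$, I expand both sides: the left-hand side is $f(x) + p(x, y) + f(z) + p(z, z)$, and the right-hand side is $f(x) + p(x, z) + f(z) + p(z, y)$. The terms $f(x)$ and $f(z)$ occur identically on both sides and cancel, so the desired inequality reduces exactly to the submodularity of $p$, namely $p(x, y) + p(z, z) \leqslant p(x, z) + p(z, y)$, which holds by hypothesis. I would then handle $r : (x, y) \mapsto p(x, y) + f(y)$ in the same way: expanding $r(x, y) + r(z, z)$ and $r(x, z) + r(z, y)$, the terms $f(y)$ and $f(z)$ cancel between the two sides, and once again the inequality collapses to the submodularity of $p$.

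Finally, for the ``in particular'' statement I would apply the first claim with the specific choice $f = -w_p$, that is $f(x) = -p(x, x)$, so that $(x, y) \mapsto f(x) + p(x, y)$ becomes precisely $(x, y) \mapsto p(x, y) - p(x, x)$. The only point requiring a brief remark is that $f$ must be a genuine real-valued map $X \to \mathbb{R}$; this is guaranteed exactly by the finiteness of $p$, since $w_p(x) = p(x, x)$ then never takes the value $-\infty$. With $f$ so defined, the submodularity of $(x, y) \mapsto p(x, y) - p(x, x)$ follows immediately from the first part.

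There is no genuine obstacle here: the argument is purely a matter of expanding definitions and cancelling the $f$-terms, and the single piece of care needed is to confirm that the finiteness hypothesis legitimizes $-w_p$ as an admissible choice of $f$ in the specialization.
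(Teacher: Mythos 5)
Your proof is correct and is exactly the direct verification the paper has in mind (the paper simply declares the proof straightforward): the $f$-terms cancel between the two sides of the submodularity inequality, and the finiteness of $p$ is precisely what makes $f = -w_p$ an admissible real-valued map for the last claim.
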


\begin{proof}
The proof is straightforward. 
\end{proof}

Here is a more involved result, inspired by Topkis \cite[Table~1]{Topkis78}. 

\begin{proposition}\label{prop:fgsub}
Let $p$ be a finite submodular map on $X$ such that 
\[
p(x, x) \vee p(y, y) \leqslant p(x, y),
\]
for all $x, y \in X$, and let $f : K \to { \mathbb{R} \cup \{ -\infty \} }$ be a concave order-preserving map on a convex subset $K$ of $\mathbb{R}$ with ${ K } \supseteq { p(X \times X) }$. 
Then $f \circ p$ is submodular. 
\end{proposition}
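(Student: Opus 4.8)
The plan is to reduce the submodularity of $f \circ p$ to a one-dimensional inequality about four real numbers and then exploit concavity of $f$ in its ``decreasing increments'' form together with monotonicity. Fix $x, y, z \in X$ and abbreviate $a = p(x,z)$, $b = p(z,y)$, $c = p(x,y)$, $e = p(z,z)$; all of these lie in $p(X \times X) \subseteq K$. The inequality to prove is exactly $f(c) + f(e) \leqslant f(a) + f(b)$. Two facts are available about these numbers: submodularity of $p$ gives $c + e \leqslant a + b$, and the hypothesis $p(u,u) \vee p(v,v) \leqslant p(u,v)$, applied to the pairs $(x,z)$ and $(z,y)$, gives $e \leqslant a$ and $e \leqslant b$. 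Since the target inequality and both constraints are symmetric in the two numbers $a$ and $b$, I may assume without loss of generality that $a \leqslant b$.

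Next I would split into two cases according to the position of $c$ relative to $b$. If $c \leqslant b$, the claim follows from monotonicity alone: $f(c) \leqslant f(b)$ and $f(e) \leqslant f(a)$ (the latter because $e \leqslant a$), and adding the two yields $f(c) + f(e) \leqslant f(a) + f(b)$. If instead $c > b$, set $h = c - b > 0$. From $c + e \leqslant a + b$ one gets $h \leqslant a - e$, hence $e + h \leqslant a$; since $e, a \in K$ and $K$ is convex, this places $e + h \in K$, so that $f$ may legitimately be evaluated there. This is precisely the delicate point: I must organize the argument so as to remain inside the domain $K$ and avoid writing something like $f(a + b - e)$, which need not lie in $K$.

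Concavity then enters through its decreasing-increments characterization: for $s \leqslant t$ in $K$ and $h \geqslant 0$ with $t + h \in K$, one has $f(s+h) - f(s) \geqslant f(t+h) - f(t)$. I apply this with $s = e$, $t = b$ (so $e \leqslant b$), and $h = c - b$, noting that $t + h = c \in K$ and $s + h = e + h \in K$ by the previous paragraph; this gives $f(c) - f(b) \leqslant f(e + h) - f(e)$. Finally $e + h \leqslant a$ together with monotonicity yields $f(e + h) \leqslant f(a)$, so that $f(c) - f(b) \leqslant f(a) - f(e)$, which rearranges to the desired inequality.

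It remains only to accommodate the value $-\infty$. Because $f$ is order-preserving and $e \leqslant a$, $e \leqslant b$, there are two possibilities: either $f(e)$ is finite, which forces $f(a)$ and $f(b)$ finite and reduces matters to the real-valued computation above, or $f(e) = -\infty$, in which case the left-hand side $f(c) + f(e)$ equals $-\infty$ and the inequality holds automatically. The hard part, as flagged, will not be the concavity estimate itself but the bookkeeping that keeps every evaluation of $f$ within $K$; once the case split is arranged so that $e + h$ is controlled by $a$, concavity and monotonicity close the argument routinely.
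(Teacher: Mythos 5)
Your proof is correct and follows essentially the same route as the paper: your auxiliary point $e+h = p(x,y) + p(z,z) - p(z,y)$ is exactly the paper's intermediate value $P$, your monotonicity step $f(e+h) \leqslant f(a)$ is the paper's $F_1 \geqslant 0$, and your decreasing-increments application of concavity is the paper's $F_2 \geqslant 0$ (proved there via the equivalent three-slopes inequalities). The only cosmetic differences are your (unnecessary but harmless) normalization $a \leqslant b$ and your merging of the paper's Cases 2 and 3 into a single application of the decreasing-increments lemma.
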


\begin{proof}
We write $w$ for the map $x \mapsto p(x, x)$. 
Let $x, y, z \in X$. 
We want to show the following inequality: 
\begin{equation}\label{eq:subfg}
f \circ p(x, y) + f \circ w(z) \leqslant f \circ p(x, z) + f \circ p(z, y). 
\end{equation}
If one of $f \circ p(x, y), f \circ w(z), f \circ p(x, z), f \circ p(z, y)$ is equal to $-\infty$, then \eqref{eq:subfg} holds. 
So now assume that all of these terms are finite. 
Take
\begin{align*}
F_1 &:= f \circ p(x, z) - f(P), \\
F_2 &:= f(P) - f \circ p(x, y) - f \circ w(z) + f \circ p(z, y), \\
P &:=  p(x, y) \vee p(z, y) + w(z) - p(z, y). 
\end{align*}
Then \eqref{eq:subfg} is the same as $F_1 + F_2 \geqslant 0$. 
Note that $P \geqslant w(z)$ so $f(P)$ is finite. 
Using the properties of $p$ and the fact that $f$ is order-preserving, we deduce that $F_1 \geqslant 0$. 
So let us show that $F_2 \geqslant 0$ to conclude the proof. 

\textit{Case}~1: $p(x, y) \leqslant p(z, y)$. 
Then $P = w(z)$ and 
\[
F_2 = f \circ p(z, y) - f \circ p(x, y) \geqslant 0. 
\]

\textit{Case}~2: $w(z) = p(z, y) < p(x, y)$. 
Then $P = p(x, y)$ and $F_2 = 0$. 

\textit{Case}~3: $w(z) < p(z, y) < p(x, y)$. 
Then the concavity of $f$ implies 
\[
\frac{f(p(x, y)) - f(w(z))}{p(x, y) - w(z)} \geqslant \frac{f(p(x, y)) - f(p(z, y))}{p(x, y) - p(z, y)}. 
\]
Moreover, $w(z) < P < p(x, y)$, so the concavity of $f$ also implies
\[
\frac{f(P) - f(w(z))}{P - w(z)} \geqslant \frac{f(p(x, y)) - f(w(z))}{p(x, y) - w(z)}. 
\]
Hence we have 
\[
\frac{f(P) - f(w(z))}{P - w(z)} \geqslant \frac{f(p(x, y)) - f(p(z, y))}{p(x, y) - p(z, y)}. 
\]
And since $P - w(z) = p(x, y) - p(z, y) > 0$ this shows that 
\[
f(P) - f \circ w(z) \geqslant f \circ p(x, y) - f \circ p(z, y), 
\]
i.e.\ $F_2 \geqslant 0$, and the result is proved. 
\end{proof}

\begin{example}
Let $p$ be a finite submodular map such that 
\[
0 \leqslant p(x, x) \vee p(y, y) \leqslant p(x, y),
\]
for all $x, y$. 
Then $(x, y) \mapsto -1/p(x, y)$, $(x, y) \mapsto \log p(x, y)$ and $(x, y) \mapsto \sqrt{ p(x, y) }$ are submodular, so that 
\begin{align*}
p(x, z)^{-1} + p(z, y)^{-1} &\leqslant p(x, y)^{-1} + p(z, z)^{-1}, \\
p(x, y) p(z, z) &\leqslant p(x, z) p(z, y), \\
\sqrt{ p(x, y) } + \sqrt{ p(z, z) } &\leqslant \sqrt{ p(x, z) } + \sqrt{ p(z, y) }, 
\end{align*}
for all $x, y, z$. 
\end{example}

\begin{example}\label{ex:transfo3}
Let $b > 0$ and $p$ be a finite submodular map such that
\[
-b \leqslant p(x, x) \vee p(y, y) \leqslant p(x, y),
\]
for all $x, y$. 
Then $(x, y) \mapsto \frac{p(x, y)}{b + p(x, y)}$ is submodular.
See K\"unzi and Vajner \cite{Kuenzi94}.
\end{example}

\section{Partial metrics and interlaced spaces}\label{sec:interlaced}

In this section, we introduce \textit{(pseudo-)interlaced spaces} as a generalization of Matthews' notion of partial (pseudo-)metric space, see Matthews \cite{Matthews92}, Waszkiewicz \cite{Waszkiewicz01}, Bukatin et al.\ \cite{Bukatin09}. 

\subsection{Reminders on partial (pseudo-)metrics}\label{subsec:ppm}

A \textit{partial pseudo-metric} on a set $X$ is a finite symmetric submodular map $p : X \times X \to \mathbb{R}$ such that
\[
0 \leqslant w_p(x) \leqslant p(x, y),
\]
for all $x, y \in X$. 
A pair $(X, p)$ is then called a \textit{partial pseudo-metric space}. 
A \textit{partial metric} on $X$ is a partial pseudo-metric $p$ on $X$ such that 
\[
{ w_p(x) = p(x, y) = w_p(y) } \Rightarrow { x = y }, 
\]
for all $x, y \in X$. 
In this case, $(X, p)$ is called a \textit{partial metric space}. 

\subsection{Pseudo-interlaced spaces}

We now generalize the concept of partial pseudo-metric space by introducing the following notion. 

\begin{definition}
A \textit{pseudo-interlaced space} is a triplet $(X, p, q)$, where $X$ is a set, $p$ and $q$ are maps $X \times X \to \mathbb{R}$, and the following conditions are satisfied:
\begin{itemize}
  \item $p$ is finite symmetric submodular on $X$;
  \item ${-q}$ is finite symmetric submodular on $X$;
  \item $w(x) := p(x, x) = q(x, x)$, for all $x \in X$;
  \item there is some $k > 0$ such that 
  $
  w(x) + k q(x, y) \leqslant k p(x, y) + w(y)
  $,
  for all $x, y \in X$. 
\end{itemize}
\end{definition}

\begin{example}
Let $(X, d)$ be a \textit{pseudo-metric space}, in the sense that the map $d : X \times X \to \mathbb{R}_+$ satisfies $d(x, x) = 0$ and $d(x, y) = d(y, x)$, for all $x, y \in X$, and the triangle inequality holds:
\[
d(x, z) \leqslant d(x, y) + d(y, z),
\]
for all $x, y, z \in X$. 
Then $d$ is called a \textit{pseudo-metric} on $X$, and $(X, d, 0)$ is a pseudo-interlaced space, where $0$ denotes the real-valued map $(x, y) \mapsto 0$. 
\end{example}

\begin{lemma}\label{lem:ineq}
Let $(X, p, q)$ be a pseudo-interlaced space. 
Then
\[
q(x, y) \leqslant \frac{w(x) + w(y)}{2} \leqslant p(x, y),
\]
for all $x, y \in X$. 
\end{lemma}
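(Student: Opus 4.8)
The plan is to obtain the two inequalities independently by invoking the symmetric case of Lemma~\ref{lem:firstprops2}, applied once to $p$ and once to $-q$; the fourth axiom of a pseudo-interlaced space (the one involving the constant $k$) will play no role in this particular statement.

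For the right-hand inequality, I would argue as follows. By hypothesis $p$ is finite symmetric submodular on $X$, so the second assertion of Lemma~\ref{lem:firstprops2} gives $w_p(x) + w_p(y) \leqslant 2 p(x, y)$ for all $x, y \in X$. Since $w_p(x) = p(x, x) = w(x)$ by the third axiom, this reads $w(x) + w(y) \leqslant 2 p(x, y)$, which is exactly $\frac{w(x) + w(y)}{2} \leqslant p(x, y)$.

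For the left-hand inequality, the key observation is to feed the map $-q$ into the same lemma. By hypothesis $-q$ is finite symmetric submodular, so Lemma~\ref{lem:firstprops2} yields $w_{-q}(x) + w_{-q}(y) \leqslant 2 (-q)(x, y)$. Here $w_{-q}(x) = (-q)(x, x) = -q(x, x) = -w(x)$, again using the third axiom, so the inequality becomes $-w(x) - w(y) \leqslant -2 q(x, y)$. Rearranging gives $q(x, y) \leqslant \frac{w(x) + w(y)}{2}$, as required.

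The only point that requires any care is the bookkeeping of signs when passing from $q$ to $-q$ and back—in particular remembering that $w_{-q} = -w$ on the diagonal—so I do not anticipate a genuine obstacle here. Combining the two displays establishes the chain $q(x, y) \leqslant \frac{w(x) + w(y)}{2} \leqslant p(x, y)$ for all $x, y \in X$.
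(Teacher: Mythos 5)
Your proof is correct and follows exactly the paper's approach: the paper's own proof is the one-line remark that the lemma "is a direct consequence of Lemma~\ref{lem:firstprops2} applied to $p$ and $-q$," which is precisely the argument you spell out, including the correct sign bookkeeping for $w_{-q} = -w$.
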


\begin{proof}
This is a direct consequence of Lemma~\ref{lem:firstprops2} applied to $p$ and $-q$. 
\end{proof}

\begin{proposition}\label{prop:pi}
Let $(X, p, q)$ be a pseudo-interlaced space. 
Then the map $d_{p, q} : X \times X \to \mathbb{R}$ defined by 
\[
d_{p, q}(x, y) = p(x, y) - q(x, y)
\]
is a pseudo-metric on $X$ that makes $X$ a completely regular topological space, and $p$, $q$, and $w$ continuous maps.
Moreover, if $(x_n)_n$ is a net in $X$ and $x \in X$, then $x_n \to x$ with respect to $d_{p, q}$ if and only if $p(x_n, x) \to w(x)$ and $q(x_n, x) \to w(x)$. 
\end{proposition}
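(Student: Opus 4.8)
The plan is to verify the four assertions in turn: that $d_{p,q}$ is a pseudo-metric, that the induced topology is completely regular, that $p$, $q$, and $w$ are continuous, and finally the convergence criterion. I would begin with the pseudo-metric axioms. Symmetry of $d_{p,q}$ is inherited from that of $p$ and $q$, and $d_{p,q}(x,x) = w(x) - w(x) = 0$. Non-negativity is exactly Lemma~\ref{lem:ineq}, which gives $q(x,y) \leqslant \frac{w(x)+w(y)}{2} \leqslant p(x,y)$, whence $d_{p,q}(x,y) \geqslant 0$. The only computational point is the triangle inequality: submodularity of $p$ yields $p(x,y) + w(z) \leqslant p(x,z) + p(z,y)$, while submodularity of $-q$, written out, yields $q(x,z) + q(z,y) \leqslant q(x,y) + w(z)$; subtracting the second from the first and cancelling $w(z)$ gives $d_{p,q}(x,y) \leqslant d_{p,q}(x,z) + d_{p,q}(z,y)$ at once.

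Complete regularity then comes for free, since every pseudo-metric space is a uniform space and hence completely regular; nothing beyond the first step is needed here. The substance of the argument lies in continuity, and this is where the fourth axiom of a pseudo-interlaced space (the interlacing inequality with constant $k$) is essential. The key estimate is that $w$ is Lipschitz: the inequality $w(x) + k\, q(x,y) \leqslant k\, p(x,y) + w(y)$ rearranges to $w(x) - w(y) \leqslant k\, d_{p,q}(x,y)$, and by symmetry $|w(x) - w(y)| \leqslant k\, d_{p,q}(x,y)$. I would then control the effect of changing a single argument of $p$: submodularity with $z = x'$ gives $p(x,y) - p(x',y) \leqslant p(x,x') - w(x')$, and combining the bound $p(x,x') \leqslant d_{p,q}(x,x') + \frac{w(x)+w(x')}{2}$ from Lemma~\ref{lem:ineq} with the Lipschitz estimate for $w$ shows $p(x,x') - w(x') \leqslant (1 + k/2)\, d_{p,q}(x,x')$, hence $|p(x,y) - p(x',y)| \leqslant (1 + k/2)\, d_{p,q}(x,x')$. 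The analogous computation for $q$ uses submodularity of $-q$. Joint continuity of $p$ and $q$ on $X \times X$ then follows by splitting a simultaneous change of both arguments into two single-argument changes, and continuity of $w$ is the Lipschitz bound already obtained.

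The convergence criterion follows immediately from continuity: if $x_n \to x$ then $(x_n, x) \to (x,x)$ in $X \times X$, so $p(x_n,x) \to w(x)$ and $q(x_n,x) \to w(x)$; conversely, if both limits hold then $d_{p,q}(x_n,x) = p(x_n,x) - q(x_n,x) \to w(x) - w(x) = 0$, so $x_n \to x$. I expect the main obstacle to be the continuity of $p$ and $q$: assembling the correct Lipschitz-type estimates requires combining submodularity (to isolate a single-variable change) with both Lemma~\ref{lem:ineq} and the interlacing inequality, and it is precisely here that the constant $k$ is indispensable, since without the fourth axiom $w$ need not be continuous at all.
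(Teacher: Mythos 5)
Your proposal is correct and follows essentially the same route as the paper: the pseudo-metric axioms via Lemma~\ref{lem:ineq} and the sum of the two submodularity inequalities, the Lipschitz bound $|w(x)-w(y)|\leqslant k\,d_{p,q}(x,y)$ from the interlacing axiom, and continuity of $p$ and $q$ by using submodularity to isolate a single-argument change. The only (harmless) difference is presentational: you package the single-variable estimate as an explicit Lipschitz bound $|p(x,y)-p(x',y)|\leqslant(1+k/2)\,d_{p,q}(x,x')$, whereas the paper sandwiches $p(x_n,y_n)-p(x,y)$ between two submodularity bounds and passes to the limit along the net.
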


We call the topology induced by $d_{p, q}$ the \textit{$d_{p, q}$-topology}, and $d_{p, q}$ the \textit{intrinsic pseudo-metric} on $(X, p, q)$. 

\begin{proof}
As per the previous lemma we have $d_{p, q}(x, y) \geqslant 0$, for all $x, y \in X$. 
Since $p$ and $q$ are symmetric, we also have $d_{p, q}(x, y) = d_{p, q}(y, x)$, for all $x, y \in X$. 
The triangle inequality is obtained by summing up the submodularity inequalities of $p$ and ${-q}$, and using the fact that $p(z, z) = q(z, z)$ for all $z \in X$.
So $d_{p, q}$ is indeed a pseudo-metric on $X$. 
The fact that $X$ is a completely regular topological space under the $d_{p, q}$-topology is a classical result. 


Let us show that $w$ is continuous. 
There exists some $k > 0$ such that $w(x) + k q(x, y) \leqslant k p(x, y) + w(y)$, for all $x, y \in X$. 
Using the symmetry of $p$ and $q$, this implies that 
\begin{equation}\label{eq:lip}
| w(x) - w(y) | \leqslant k d_{p, q}(x, y), 
\end{equation}
for all $x, y \in X$. 
Thus, $w$ is Lipschitz-continuous, hence continuous. 

Now we show that $p$ and $q$ are continuous. 
Let $((x_n, y_n))_n$ be a net such that $x_n \to x$ and $y_n \to y$. 
By Lemma~\ref{lem:ineq}, we have 
\[
0 \leqslant p(x_n, x) - \frac{w(x_n) + w(x)}{2} \leqslant d_{p, q}(x_n, x),
\]
so that $p(x_n, x) \to w(x)$. 
Similarly, $p(y_n, y) \to w(y)$. 
Applying the submodularity property of $p$ multiple times, we obtain
\[
p(x_n, y_n) - p(x, y) \geqslant w(x_n) - p(x_n, x) + w(y_n) - p(y_n, y),
\]
and 
\[
p(x_n, y_n) - p(x, y) \leqslant p(x_n, x) - w(x) + p(y_n, y) - w(y),
\]
so that $p(x_n, y_n) \to p(x, y)$. 
This proves that $p$ is continuous. 
The continuity of $q$ can be proved along similar lines. 

To complete the proof, let $(z_n)_n$ be a net and $z \in X$. 
Suppose that $p(z_n, z) \to w(z)$ and $q(z_n, z) \to w(z)$. 
Then $d(z_n, z) = p(z_n, z) - q(z_n, z)$ tends to $w(z) - w(z) = 0$, i.e.\ $z_n$ tends to $z$. 
\end{proof}

\begin{remark}\label{rk:eqpq}
Suppose that $p(x, y) = q(x, y)$, for some $x, y \in X$. 
Then, using Inequality~\eqref{eq:lip} in the proof of Proposition~\ref{prop:pi}, we get $w(x) = w(y)$, hence $w(x) = \frac{w(x) + w(y)}{2} = w(y)$. 
From Lemma~\ref{lem:ineq}, we obtain $w(x) = p(x, y) = q(x, y) = w(y)$. 
\end{remark}

\begin{example}\label{ex:q0q1}
Let $(X, p)$ be a partial pseudo-metric space. 
We say that a map $q : X \times X \to \mathbb{R}$ is \textit{adjoint to $(X, p)$} if ${-q}$ is a finite symmetric submodular map such that $w(x) := q(x, x) = p(x, x)$, for all $x \in X$. 
In this case, $(X, p, q)$ is a pseudo-interlaced space. 
To see why this holds, it suffices to observe that $w(x) + 2 q(x, y) \leqslant 2 p(x, y) + w(y)$, for all $x, y \in X$ (use Lemma~\ref{lem:ineq} and the fact that $w(x) \leqslant p(x, y)$, by definition of a partial pseudo-metric). 

Note that the maps 
\begin{align*}
q_0 &: (x, y) \mapsto w(x) \wedge w(y), \\
q_1 &: (x, y) \mapsto \frac{w(x) + w(y)}{2}, 
\end{align*}
are adjoint to $(X, p)$. 
This notably provides us with the following pseudo-metrics on $X$: 
\[
d_{p, 0} : (x, y) \mapsto p(x, y) - w(x) \wedge w(y)
\]
and 
\[
d_{p, 1} : (x, y) \mapsto p(x, y) - \frac{w(x) + w(y)}{2}, 
\]
that we call respectively the \textit{zeroth and first intrinsic pseudo-metrics} on $(X, p)$. 
Both $d_{p, 0}$ and $d_{p, 1}$ are \textit{radially-convex}, in the sense that 
\[
d(x, z) = d(x, y) + d(y, z),
\]
whenever $x \leq_p y \leq_p z$ and $d \in \{ d_{p, 0}, d_{p, 1} \}$. 
Indeed, let $x, y, z \in X$ with $x \leq_p y \leq_p z$. 
Then $p(x, y) = w(y)$, $p(y, z) = w(z)$, and $p(x, z) = w(z)$. 
Moreover, $d_{p, 1}(x, z) = p(x, z) - \frac{w(x) + w(z)}{2} = \frac{w(z) - w(x)}{2}$, and similarly $d_{p, 1}(x, y) = \frac{w(y) - w(x)}{2}$ and $d_{p, 1}(y, z) = \frac{w(z) - w(y)}{2}$, so that $d_{p, 1}(x, z) = d_{p, 1}(x, y) + d_{p, 1}(y, z)$. 
This shows that $d_{p, 1}$ is radially-convex, and the proof for $d_{p, 0}$ is analogous. 
\end{example}

\begin{example}
Let $L$ be a lattice and let $f : L \to \mathbb{R}$ be a \textit{valuation} on $L$, i.e.\ an order-preserving map satisfying the \textit{modularity condition}
\[
f(x \vee y) + f(x \wedge y) = f(x) + f(y),
\]
for all $x, y \in L$. 
Then $(L, p)$ is a partial pseudo-metric space with adjoint $q$, where $p$ and $q$ are the maps $p : (x, y) \mapsto f(x \vee y)$ and $q : (x, y) \mapsto f(x \wedge y)$, and the corresponding pseudo-metric $d := d_{p, q}$ is defined by 
\[
d(x, y) = f(x \vee y) - f(x \wedge y),
\]
for all $x, y \in L$. 
Moreover, $d$ is a metric if and only if $f$ is increasing, i.e.\ $x < y$ implies $f(x) < f(y)$, for all $x, y \in L$. 
\end{example}

\begin{proposition}\label{prop:eq}
Let $(X, p, q)$ be a pseudo-interlaced space. 
Assume that 
\[
w(x) + w(y) \leqslant p(x, y) + q(x, y),
\]
for all $x, y \in X$. 
Then $d_{p, q}$ is equivalent to $d_{p, 1}$, and we have
\[
d_{p, 1} \leqslant d_{p, q} \leqslant 2 d_{p, 1}, 
\]
so that the $d_{p, q}$-topology coincides with the $d_{p, 1}$-topology. 
Moreover, if $(x_n)_n$ is a net and $x \in X$, then $x_n \to x$ with respect to $d_{p, q}$ if and only if $p(x, x_n) \to w(x)$ and $w(x_n) \to w(x)$. 
\end{proposition}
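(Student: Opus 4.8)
The plan is to establish the two-sided inequality first, since it immediately yields equivalence of the two pseudo-metrics and, together with Proposition~\ref{prop:pi}, the coincidence of topologies; then to deduce the convergence characterization by applying Proposition~\ref{prop:pi} to a conveniently chosen adjoint.

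First I would observe that both claimed inequalities are merely rearrangements of facts already on hand. Writing $d_{p,q}(x,y) = p(x,y) - q(x,y)$ and $d_{p,1}(x,y) = p(x,y) - \tfrac{w(x)+w(y)}{2}$, the inequality $d_{p,1} \leqslant d_{p,q}$ becomes, after cancelling $p(x,y)$, the statement $q(x,y) \leqslant \tfrac{w(x)+w(y)}{2}$, which is exactly the left half of Lemma~\ref{lem:ineq}. Symmetrically, $d_{p,q} \leqslant 2 d_{p,1}$ unfolds to $w(x)+w(y) \leqslant p(x,y) + q(x,y)$, which is precisely the standing hypothesis of the proposition. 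Thus the display $d_{p,1} \leqslant d_{p,q} \leqslant 2 d_{p,1}$ holds with no further computation. Since both maps are genuine pseudo-metrics --- $d_{p,q}$ by Proposition~\ref{prop:pi}, and $d_{p,1} = d_{p,q_1}$ by Example~\ref{ex:q0q1}, where $q_1 : (x,y) \mapsto \tfrac{w(x)+w(y)}{2}$ --- the two-sided bound shows they are bi-Lipschitz equivalent, so the $d_{p,q}$-topology and the $d_{p,1}$-topology coincide.

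For the convergence statement I would apply Proposition~\ref{prop:pi} to the pseudo-interlaced space $(X, p, q_1)$, whose intrinsic pseudo-metric is exactly $d_{p,1}$. That proposition gives $x_n \to x$ with respect to $d_{p,1}$ if and only if $p(x_n, x) \to w(x)$ and $q_1(x_n, x) \to w(x)$. Now $q_1(x_n, x) = \tfrac{w(x_n) + w(x)}{2}$, so the condition $q_1(x_n, x) \to w(x)$ is equivalent to $w(x_n) \to w(x)$; and the symmetry of $p$ lets me replace $p(x_n, x)$ by $p(x, x_n)$. Combining this with the coincidence of topologies established above (so that $d_{p,q}$-convergence and $d_{p,1}$-convergence are one and the same), I obtain that $x_n \to x$ with respect to $d_{p,q}$ if and only if $p(x, x_n) \to w(x)$ and $w(x_n) \to w(x)$, as claimed.

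I do not anticipate a serious obstacle: the content is essentially bookkeeping, once one notices that the two inequalities encode Lemma~\ref{lem:ineq} and the hypothesis respectively. The only point demanding a little care is checking that $(X, p, q_1)$ is indeed a pseudo-interlaced space, so that Proposition~\ref{prop:pi} may legitimately be invoked for $d_{p,1}$; but this is already recorded in Example~\ref{ex:q0q1}, where $q_1$ is shown to be adjoint to $(X,p)$, so I would simply cite it rather than re-verify it.
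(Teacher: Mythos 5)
Your treatment of the two-sided inequality is exactly the paper's: $d_{p,1} \leqslant d_{p,q}$ is the left half of Lemma~\ref{lem:ineq}, and $d_{p,q} \leqslant 2 d_{p,1}$ is a rearrangement of the standing hypothesis. The one step that does not go through as written is the point you yourself single out: justifying that $(X, p, q_1)$ is a pseudo-interlaced space by citing Example~\ref{ex:q0q1}. That example is stated for a \emph{partial pseudo-metric space} $(X, p)$, and its verification of the last axiom (existence of $k>0$ with $w(x) + k\, q(x, y) \leqslant k\, p(x, y) + w(y)$) takes $k = 2$ and uses the inequality $w(x) \leqslant p(x, y)$, which is part of the definition of a partial pseudo-metric but is \emph{not} available for the first component of a general pseudo-interlaced space. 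It can genuinely fail under the hypotheses of the proposition: on $X = \{a, b\}$ with $w(a) = 0$, $w(b) = 10$, $p(a, b) = 6$ and $q = q_1$, all axioms of a pseudo-interlaced space hold (one needs $k = 10$) and so does the hypothesis $w(x) + w(y) \leqslant p(x, y) + q(x, y)$, yet $w(b) > p(a, b)$ and the $k=2$ argument of Example~\ref{ex:q0q1} breaks down. So the citation does not establish what you need.

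The gap is easily patched: the Lipschitz estimate $|w(x) - w(y)| \leqslant k\, d_{p,q}(x, y)$ from the proof of Proposition~\ref{prop:pi}, combined with the inequality $d_{p,q} \leqslant 2 d_{p,1}$ you have just established, gives $w(x) - w(y) \leqslant 2k\, d_{p,1}(x, y) = 2k \bigl( p(x, y) - q_1(x, y) \bigr)$, which is the last axiom for $(X, p, q_1)$ with constant $2k$; the remaining axioms are immediate since $-q_1$ is of the form $f(x) + g(y)$. Alternatively --- and this is the route the paper takes --- the detour through $(X, p, q_1)$ is unnecessary: for the forward direction, $d_{p,q}$-convergence yields $p(x_n, x) \to w(x)$ and $w(x_n) \to w(x)$ directly from Proposition~\ref{prop:pi} applied to the original space $(X, p, q)$ (continuity of $p$ and $w$); for the converse, $p(x, x_n) \to w(x)$ and $w(x_n) \to w(x)$ give $d_{p,1}(x, x_n) \to 0$ by pure arithmetic, and then $d_{p,q}(x, x_n) \leqslant 2 d_{p,1}(x, x_n) \to 0$. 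No appeal to $(X, p, q_1)$ being pseudo-interlaced, nor to $d_{p,1}$ being a pseudo-metric, is needed for that argument.
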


\begin{proof}
Using Lemma~\ref{lem:ineq}, the inequality $d_{p, 1} \leqslant d_{p, q}$ is clear. 
Now let $x, y \in X$. 
By hypothesis, $w(x) + w(y) \leqslant p(x, y) + q(x, y)$, hence $d_{p, q}(x, y) = p(x, y) - q(x, y) \leqslant 2 p(x, y) - w(x) - w(y) = 2 d_{p, 1}(x, y)$. 
This proves that $d_{p, q}$ is equivalent to $d_{p, 1}$. 

Let $x \in X$ and $(x_n)_n$ be a net. 
If $x_n \to x$ with respect to $d_{p, q}$, then $p(x, x_n) \to w(x)$ by Proposition~\ref{prop:pi}, and $w(x_n) \to w(x)$ by continuity of $w$. 
Conversely, if $p(x, x_n) \to w(x)$ and $w(x_n) \to w(x)$, then $d_{p, 1}(x, x_n) \to 0$, hence $d_{p, q}(x, x_n) \to 0$, as required. 
\end{proof}

A pseudo-interlaced space $(X, p, q)$ can be equipped with a quasiorder $\leq_{p, q}$ defined by $x \leq_{p, q} y$ if $x \leq_{p} y$ and $y \leq_{-q} x$. 
Using Lemma~\ref{lem:leq}, we have $x \leq_{p, q} y$ if and only if $w(x) \leqslant q(x, y) \leqslant p(x, y) \leqslant w(y)$, for all $x, y \in X$. 

\begin{proposition}\label{prop:closedorder}
Let $(X, p, q)$ be a pseudo-interlaced space equipped with its $d_{p, q}$-topology. 
Then the quasiorder $\leq_{p, q}$ is a closed subset of $X \times X$, and principal ideals and principal filters are closed subsets of $X$. 
\end{proposition}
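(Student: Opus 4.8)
The plan is to reduce the entire statement to the continuity of $w$, $p$, and $q$ established in Proposition~\ref{prop:pi}, combined with the order characterization recorded just before the statement. First I would recall that, by Lemma~\ref{lem:leq}, for all $x, y \in X$ one has $x \leq_{p, q} y$ if and only if $w(x) \leqslant q(x, y) \leqslant p(x, y) \leqslant w(y)$. Viewed as a subset of $X \times X$, the relation $\leq_{p, q}$ therefore equals
\[
\{ (x, y) : w(x) \leqslant q(x, y) \} \cap \{ (x, y) : p(x, y) \leqslant w(y) \},
\]
where I drop the middle inequality $q(x, y) \leqslant p(x, y)$ since it holds automatically by Lemma~\ref{lem:ineq}, and the outer inequality $w(x) \leqslant w(y)$ since it is implied by the full chain.

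Second, I would check that each of these two sets is closed in the product $X \times X$. The projections $\pi_1, \pi_2 : X \times X \to X$ are continuous, and $w, p, q$ are continuous by Proposition~\ref{prop:pi}; hence $(x, y) \mapsto q(x, y) - w \circ \pi_1(x, y)$ and $(x, y) \mapsto w \circ \pi_2(x, y) - p(x, y)$ are continuous real-valued maps on $X \times X$, and the two sets above are precisely their preimages of the closed half-line $[0, +\infty)$. An intersection of closed sets being closed, it follows that $\leq_{p, q}$ is a closed subset of $X \times X$.

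Finally, for a fixed $y \in X$, the principal ideal $\{ y \}^{\downarrow} = \{ x \in X : x \leq_{p, q} y \}$ is the preimage of the closed set $\leq_{p, q}$ under the continuous slice map $x \mapsto (x, y)$, and is therefore closed; dually, the principal filter $\{ y \}^{\uparrow}$ is the preimage of $\leq_{p, q}$ under $x \mapsto (y, x)$, and is closed for the same reason. (Alternatively, one can bypass the quasiorder and write each of these directly as an intersection of superlevel and sublevel sets of the continuous maps $x \mapsto q(x, y) - w(x)$ and $x \mapsto w(y) - p(x, y)$.)

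As for difficulties, I do not expect a genuine obstacle here: everything flows from Proposition~\ref{prop:pi}. The only points demanding a little care are to keep in mind that the continuity of $p$ and $q$ is meant with respect to the product topology on $X \times X$ (so that composing with the continuous projections and slice maps is legitimate, even though the $d_{p, q}$-topology need not be Hausdorff), and to notice that two of the four inequalities in the order characterization are redundant, which streamlines the argument.
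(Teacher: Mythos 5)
Your proof is correct and takes essentially the same approach as the paper: both rest on the characterization $x \leq_{p,q} y \Leftrightarrow w(x) \leqslant q(x,y) \leqslant p(x,y) \leqslant w(y)$ together with the continuity of $w$, $p$, and $q$ from Proposition~\ref{prop:pi}. The paper verifies closedness by passing to the limit along a net $((x_n,y_n))_n$ with $x_n \leq_{p,q} y_n$, which is just the net-language version of your ``preimage of a closed half-line under a continuous map'' formulation; your observation that two of the four inequalities are redundant is a harmless streamlining.
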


\begin{proof}
Let $((x_n, y_n))_n$ be a net in $X \times X$ with $x_n \to x$, $y_n \to y$, and $x_n \leq_{p, q} y_n$ for all $n$. 
From $x_n \leq_{p, q} y_n$ we get $w(x_n) \leqslant q(x_n, y_n) \leqslant p(x_n, y_n) \leqslant w(y_n)$. 
Now, the continuity of $p$, $q$, and $w$ yields $w(x) \leqslant q(x, y) \leqslant p(x, y) \leqslant w(y)$, which shows that $x \leq_{p, q} y$.
This proves that $\leq_{p, q}$ is a closed subset of $X \times X$.  
The fact that principal ideals and principal filters with respect to $\leq_{p, q}$ are closed is a straightforward consequence. 
\end{proof}

\subsection{Interlaced spaces}

An \textit{interlaced space} is a pseudo-interlaced space $(X, p, q)$ such that ${ p(x, y) = q(x, y) } \Rightarrow { x = y }$, for all $x, y \in X$.  

\begin{proposition}\label{prop:tych}
Let $(X, p, q)$ be a pseudo-interlaced space. 
Then the following conditions are equivalent: 
\begin{enumerate}
  \item\label{prop:tych5} $(X, p, q)$ is interlaced;
  \item\label{prop:tych1} $(X, d_{p, q})$ is Tychonoff;
  \item\label{prop:tych2} $(X, d_{p, q})$ is Hausdorff; 
  \item\label{prop:tych3} $(X, d_{p, q})$ is $T_0$;
  \item\label{prop:tych4} $d_{p, q}$ is a metric;
  \item\label{prop:tych6} $\leq_{p, q}$ is a partial order.
\end{enumerate}
\end{proposition}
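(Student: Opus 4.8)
The plan is to split the six conditions into three blocks and glue them together, exploiting that most of the work has already been done in Proposition~\ref{prop:pi} and Remark~\ref{rk:eqpq}. The cheapest link is between \eqref{prop:tych5} and \eqref{prop:tych4}: by definition $d_{p, q}(x, y) = p(x, y) - q(x, y)$, so $d_{p, q}(x, y) = 0$ holds precisely when $p(x, y) = q(x, y)$. Hence the assertion ``$d_{p, q}(x, y) = 0 \Rightarrow x = y$'', which says that the pseudo-metric $d_{p, q}$ is a genuine metric, is word for word the condition defining an interlaced space. This yields \eqref{prop:tych5}$\Leftrightarrow$\eqref{prop:tych4} with no further computation.

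Next I would dispatch the purely topological equivalences \eqref{prop:tych1}$\Leftrightarrow$\eqref{prop:tych2}$\Leftrightarrow$\eqref{prop:tych3}$\Leftrightarrow$\eqref{prop:tych4} using only standard facts about pseudo-metric topologies, quoting from Proposition~\ref{prop:pi} that $(X, d_{p, q})$ is completely regular. Since being Tychonoff means completely regular and Hausdorff, and complete regularity is free here, the implications \eqref{prop:tych1}$\Rightarrow$\eqref{prop:tych2}$\Rightarrow$\eqref{prop:tych3} are immediate from the chain of separation axioms. The only step with content is \eqref{prop:tych3}$\Rightarrow$\eqref{prop:tych4}: in a pseudo-metric space two points at distance $0$ lie in exactly the same open balls, hence cannot be topologically distinguished, so the $T_0$ axiom forces $d_{p, q}$ to be a metric; conversely a metric space is Hausdorff and, being a metric space, Tychonoff, which closes this block.

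Finally I would prove \eqref{prop:tych5}$\Leftrightarrow$\eqref{prop:tych6}. Recall from the discussion preceding the proposition that $x \leq_{p, q} y$ means $w(x) \leqslant q(x, y) \leqslant p(x, y) \leqslant w(y)$, and that $\leq_{p, q}$ is already a quasiorder, so being a partial order amounts to antisymmetry. Assuming both $x \leq_{p, q} y$ and $y \leq_{p, q} x$, the symmetry of $p$ and $q$ lets me chain the two inequality strings and collapse everything to $w(x) = q(x, y) = p(x, y) = w(y)$, in particular $p(x, y) = q(x, y)$. Conversely, if $p(x, y) = q(x, y)$ then Remark~\ref{rk:eqpq} gives $w(x) = p(x, y) = q(x, y) = w(y)$, which makes both $x \leq_{p, q} y$ and $y \leq_{p, q} x$ hold. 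Thus the pairs with $p(x, y) = q(x, y)$ are exactly the mutually $\leq_{p, q}$-comparable pairs, so antisymmetry of $\leq_{p, q}$ is equivalent to ``$p(x, y) = q(x, y) \Rightarrow x = y$'', i.e.\ to interlacedness, giving \eqref{prop:tych6}$\Leftrightarrow$\eqref{prop:tych5}.

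No single step is deep, but the one demanding the most care is \eqref{prop:tych5}$\Leftrightarrow$\eqref{prop:tych6}, namely showing that mutual $\leq_{p, q}$-comparability forces $p(x, y) = q(x, y)$ and conversely. The forward direction is a short inequality chase, but the converse rests essentially on Remark~\ref{rk:eqpq} (equivalently on Lemma~\ref{lem:ineq} together with \eqref{eq:lip}) to recover the equality $w(x) = w(y)$ from $p(x, y) = q(x, y)$; without that input one cannot reinstate both comparabilities. The topological block, by contrast, is entirely routine once complete regularity is cited from Proposition~\ref{prop:pi}.
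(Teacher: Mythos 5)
Your proposal is correct and follows essentially the same route as the paper: the topological equivalences are dispatched as classical facts, \eqref{prop:tych5}$\Leftrightarrow$\eqref{prop:tych4} is read off from the definition of $d_{p,q}$, and \eqref{prop:tych5}$\Leftrightarrow$\eqref{prop:tych6} is obtained from the characterization $x \leq_{p,q} y \Leftrightarrow w(x) \leqslant q(x,y) \leqslant p(x,y) \leqslant w(y)$ together with Remark~\ref{rk:eqpq}. You merely spell out the inequality chase for the last equivalence in more detail than the paper does, which is harmless.
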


\begin{proof}
The equivalences \eqref{prop:tych1} $\Leftrightarrow$ \eqref{prop:tych2} $\Leftrightarrow$ \eqref{prop:tych3} $\Leftrightarrow$ \eqref{prop:tych4} are classical, and \eqref{prop:tych4} $\Leftrightarrow$ \eqref{prop:tych5} is obvious.  

\eqref{prop:tych5} $\Leftrightarrow$ \eqref{prop:tych6}. 
Recall that $x \leq_{p, q} y$ if and only if $w(x) \leqslant q(x, y) \leqslant p(x, y) \leqslant w(y)$. 
Then, using Remark~\ref{rk:eqpq}, $\leq_{p, q}$ is a partial order if and only if ${ p(x, y) = q(x, y) } \Rightarrow { x = y }$, for all $x, y \in X$, if and only if $(X, p, q)$ is interlaced. 
\end{proof}

\begin{proposition}
Let $(X, p, q)$ be a pseudo-interlaced space. 
Then the binary relation $\sim$ defined on $X$ by $x \sim y$ if $d_{p, q}(x, y) = 0$ is an equivalence relation that makes the quotient set $X /\!\! \sim$ into an interlaced space whose intrinsic metric topology coincides with the quotient topology. 
\end{proposition}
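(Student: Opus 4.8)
The plan is to proceed in four stages: show $\sim$ is an equivalence relation, transport $p$ and $q$ to well-defined maps on the quotient, verify the resulting triple is an interlaced space, and finally match the two topologies. Since $d_{p,q}$ is a pseudo-metric by Proposition~\ref{prop:pi}, the relation $x \sim y \Leftrightarrow d_{p,q}(x,y) = 0$ is reflexive and symmetric, and transitive via the triangle inequality $d_{p,q}(x,z) \le d_{p,q}(x,y) + d_{p,q}(y,z)$; hence $\sim$ is an equivalence relation. I write $\pi : X \to X/\!\!\sim$ for the quotient map and $[x] := \pi(x)$.

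Second, and this is the crux, I would set $\bar p([x],[y]) := p(x,y)$ and $\bar q([x],[y]) := q(x,y)$ and prove independence of the chosen representatives. Suppose $x \sim x'$, i.e.\ $d_{p,q}(x,x') = 0$, equivalently $p(x,x') = q(x,x')$; by Remark~\ref{rk:eqpq} this forces $w(x) = p(x,x') = q(x,x') = w(x')$. Feeding $p(x,x') = w(x')$ into the submodularity inequality $p(x,y) + w(x') \le p(x,x') + p(x',y)$ yields $p(x,y) \le p(x',y)$, and exchanging the roles of $x$ and $x'$ gives the reverse inequality, so $p(x,y) = p(x',y)$; varying the second coordinate the same way establishes well-definedness of $\bar p$. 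The identical argument applied to the submodular map $-q$ settles $\bar q$. (Alternatively, one may invoke the continuity of $p$ and $q$ from Proposition~\ref{prop:pi}: points at $d_{p,q}$-distance zero are topologically indistinguishable, so continuous real-valued maps cannot separate them.)

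Third, I would check that $(X/\!\!\sim, \bar p, \bar q)$ is a pseudo-interlaced space. Finiteness, symmetry, and submodularity of $\bar p$ and of $-\bar q$, together with $\bar p([x],[x]) = \bar q([x],[x]) =: \bar w([x])$ and the last axiom with the same constant $k$, all transfer verbatim from $p$ and $q$, since each is an identity or inequality evaluated on representatives. To see that the quotient is in fact interlaced, suppose $\bar p([x],[y]) = \bar q([x],[y])$; then $p(x,y) = q(x,y)$, so $d_{p,q}(x,y) = 0$, i.e.\ $x \sim y$ and $[x] = [y]$, as required (so that $d_{\bar p, \bar q}$ is genuinely a metric by Proposition~\ref{prop:tych}).

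Finally, for the topology, the intrinsic metric on the quotient satisfies $d_{\bar p,\bar q}([x],[y]) = \bar p([x],[y]) - \bar q([x],[y]) = d_{p,q}(x,y)$, so $\pi$ is distance-preserving, hence continuous for the metric topology; this gives the inclusion of the metric topology in the quotient topology. For the reverse inclusion, let $U \subseteq X/\!\!\sim$ be quotient-open, so $\pi^{-1}(U)$ is $d_{p,q}$-open, and fix $[x] \in U$; choosing $\epsilon > 0$ with the $d_{p,q}$-ball $B(x,\epsilon) \subseteq \pi^{-1}(U)$, the equality $d_{\bar p,\bar q}([x],[y]) = d_{p,q}(x,y)$ shows the $d_{\bar p,\bar q}$-ball of radius $\epsilon$ around $[x]$ lands in $U$, so $U$ is metric-open. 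Hence the two topologies coincide. The only genuinely delicate point is the representative-independence of $\bar p$ and $\bar q$ in the second stage; the remaining stages are transfers of already-established identities or the standard pseudo-metric identification argument.
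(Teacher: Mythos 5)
Your proof is correct and follows essentially the same route as the paper: establish the equivalence relation, descend $p$ and $q$ to the quotient, verify the interlaced condition, and compare topologies via the identity $d_{\tilde p,\tilde q}(\tilde x,\tilde y)=d_{p,q}(x,y)$. The only differences are cosmetic — you argue the topology coincidence with open balls where the paper uses nets and closed sets, and you explicitly supply the submodularity argument for representative-independence that the paper dismisses as ``not difficult to show.''
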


\begin{proof}
It is obvious that $\sim$ is an equivalence relation on $X$. 
We write $\tilde{X}$ for the quotient set $X /\!\! \sim$, and $\pi : X \to \tilde{X}$ for the quotient map. 
If $x \in X$, we write $\tilde{x}$ as a shorthand for $\pi(x)$, i.e.\ $\tilde{x}$ is the equivalence class of $x$ in $\tilde{X}$. 
It is not difficult to show that $x \sim x'$ and $y \sim y'$ imply $p(x, y) = p(x', y')$ and $q(x, y) = q(x', y')$, for all $x, y, x', y' \in X$. 
Thus, it is valid to define the maps $\tilde{p}$ and $\tilde{q}$ on $\tilde{X} \times \tilde{X}$ by $\tilde{p}(\tilde{x}, \tilde{y}) = p(x, y)$ (resp.\ $\tilde{q}(\tilde{x}, \tilde{y}) = q(x, y)$), for all $x, y \in X$. 
Then $(\tilde{X}, \tilde{p}, \tilde{q})$ is an interlaced space.  

To show that the $d_{\tilde{p}, \tilde{q}}$-topology coincides with the quotient topology on $\tilde{X}$, we show that $\pi^{-1}(F')$ is closed in $X$ if and only if $F'$ is closed in the $d_{\tilde{p}, \tilde{q}}$-topology, for all subsets $F'$ of $\tilde{X}$. 
Suppose first that $\pi^{-1}(F')$ is closed in $X$, and let $(\tilde{x}_n)_n$ be a net in $F'$ that converges to some $\tilde{x}$ in the $d_{\tilde{p}, \tilde{q}}$-topology. 
Then $d_{p, q}(x_n, x) = p(x_n, x) - q(x_n, x) = \tilde{p}(\tilde{x}_n, \tilde{x}) - \tilde{q}(\tilde{x}_n, \tilde{x}) = d_{\tilde{p}, \tilde{q}}(\tilde{x}_n, \tilde{x}) \to 0$. 
Since $\pi^{-1}(F')$ is closed in $X$, this implies that $x \in \pi^{-1}(F')$, hence that $\tilde{x} = \pi(x) \in F'$. 
So $F'$ is closed in the $d_{\tilde{p}, \tilde{q}}$-topology. 
Now suppose that $F'$ is closed in the $d_{\tilde{p}, \tilde{q}}$-topology, and let $(x_n)_n$ be a net in $\pi^{-1}(F')$ that converges to some $x$ in $X$. 
Then $d_{\tilde{p}, \tilde{q}}(\tilde{x}_n, \tilde{x}) = d_{p, q}(x_n, x) \to 0$. 
Since $F'$ is closed in the $d_{\tilde{p}, \tilde{q}}$-topology, this implies that $\tilde{x} \in F'$, hence that $x \in \pi^{-1}(F')$. 
So $\pi^{-1}(F')$ is closed in $X$. 
\end{proof}

Hereunder, we call \textit{monotone} a net that is either non-decreasing or non-increasing. 
The \textit{supinf} of a monotone net $(x_n)_n$, if it exists, is the sup (resp.\ the inf) of $\{ x_n \}_n$ if the net is non-decreasing (resp.\ non-increasing). 
A poset is \textit{(conditionally) monotone-complete} if every (bounded) monotone net has a supinf. 
On a set, a topology and a partial order are \textit{compatible} if, whenever an element $x$ is the supinf of a monotone net, then the net converges to $x$. 

\begin{theorem}\label{thm:cvi}
Let $(X, p, q)$ be an interlaced space. 
Assume that $(X, d_{p, q})$ is Cauchy complete. 
Then $(X, \leq_{p, q})$ is conditionally monotone-complete, and the partial order $\leq_{p, q}$ is compatible with the topology. 
Moreover, if $(x_n)_n$ is a monotone net, then the following conditions are equivalent:
\begin{enumerate}
  \item\label{thm:cvi1} $(x_n)_n$ converges;
  \item\label{thm:cvi1bis} $(x_n)_n$ has a supinf;
  \item\label{thm:cvi2} $\{ x_n \}_n$ is bounded;
  \item\label{thm:cvi3} $\{ w(x_n) \}_n$ is bounded.
\end{enumerate}
\end{theorem}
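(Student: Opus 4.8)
The plan is to reduce everything to the behaviour of the scalar net $(w(x_n))_n$ in $\mathbb{R}$, exploiting that along a monotone net the metric increments $d_{p,q}(x_m, x_n)$ are dominated by the increments of $w$. Throughout I would treat a \emph{non-decreasing} net $(x_n)_n$, the non-increasing case being entirely dual (replace sup by inf, ``bounded above'' by ``bounded below'', and $w(x_n) - w(x_m)$ by $w(x_m) - w(x_n)$). Recall that, since $(X, p, q)$ is interlaced, $\leq_{p,q}$ is a partial order and $d_{p,q}$ is a metric by Proposition~\ref{prop:tych}, and that $x \leq_{p,q} y$ holds if and only if $w(x) \leqslant q(x, y) \leqslant p(x, y) \leqslant w(y)$. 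The key inequality is the following: if $m \leqslant n$, then $x_m \leq_{p,q} x_n$, whence
\[
d_{p,q}(x_m, x_n) = p(x_m, x_n) - q(x_m, x_n) \leqslant w(x_n) - w(x_m),
\]
and in particular $w(x_m) \leqslant w(x_n)$, so that $(w(x_n))_n$ is a non-decreasing net in $\mathbb{R}$.

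I would prove the equivalence of conditions (1)--(4) as displayed through the cycle (1) $\Rightarrow$ (2) $\Rightarrow$ (3) $\Rightarrow$ (4) $\Rightarrow$ (1). For (1) $\Rightarrow$ (2), suppose $x_n \to x$; fixing $m$, the tail $(x_n)_{n \geqslant m}$ lies in the principal filter $\{x_m\}^{\uparrow}$, which is closed by Proposition~\ref{prop:closedorder}, so $x_m \leq_{p,q} x$; thus $x$ is an upper bound of $\{x_n\}_n$, and if $y$ is any upper bound then the net lies in the closed principal ideal $\{y\}^{\downarrow}$, forcing $x \leq_{p,q} y$; hence $x$ is the supremum of $\{x_n\}_n$, i.e.\ the supinf of the net. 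For (2) $\Rightarrow$ (3), the supinf is in particular an upper bound, so $\{x_n\}_n^{\uparrow} \neq \emptyset$; for a non-decreasing net this is the relevant boundedness. For (3) $\Rightarrow$ (4), if $u$ is an upper bound then $w(x_n) \leqslant w(u)$ for all $n$, because $w$ is order-preserving for $\leq_{p,q}$, so $\{w(x_n)\}_n$ is bounded above.

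The substantive step is (4) $\Rightarrow$ (1), which is where Cauchy completeness enters. Since $(w(x_n))_n$ is non-decreasing and bounded above, it converges in $\mathbb{R}$ to $\ell := \sup_n w(x_n)$. I claim $(x_n)_n$ is $d_{p,q}$-Cauchy. Given $\varepsilon > 0$, choose $n_0$ with $\ell - w(x_j) < \varepsilon/2$ for all $j \geqslant n_0$. For $m, n \geqslant n_0$, pick $k$ with $k \geqslant m$ and $k \geqslant n$ by directedness; the triangle inequality and the key inequality give
\[
d_{p,q}(x_m, x_n) \leqslant d_{p,q}(x_m, x_k) + d_{p,q}(x_k, x_n) \leqslant \bigl(w(x_k) - w(x_m)\bigr) + \bigl(w(x_k) - w(x_n)\bigr) < \varepsilon,
\]
using $w(x_k) \leqslant \ell$. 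Hence $(x_n)_n$ is Cauchy and, by completeness, converges. This closes the cycle, so (1)--(4) are equivalent.

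Finally, conditional monotone-completeness is immediate: a bounded monotone net satisfies (3), hence (2), i.e.\ it has a supinf. For compatibility, suppose $x$ is the supinf of a monotone net $(x_n)_n$; then (2) holds, so by the cycle the net converges, say $x_n \to x'$, and by the proof of (1) $\Rightarrow$ (2) the limit $x'$ is itself the supinf; since $\leq_{p,q}$ is a partial order the supinf is unique, whence $x' = x$ and $x_n \to x$. I expect the main obstacle to be the (4) $\Rightarrow$ (1) step: one must convert the scalar monotone convergence of $(w(x_n))_n$ into the Cauchy property of $(x_n)_n$, which hinges on the key inequality $d_{p,q}(x_m, x_n) \leqslant w(x_n) - w(x_m)$ for $m \leqslant n$ together with the use of directedness to compare incomparable indices $m, n$ through a common upper bound $k$. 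The identification of the limit with the supinf via the closedness of principal filters and ideals (Proposition~\ref{prop:closedorder}) is the second point requiring care.
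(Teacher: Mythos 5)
Your proof follows the same route as the paper's: you identify the limit of a convergent monotone net as its supinf via the closedness of principal ideals and filters (Proposition~\ref{prop:closedorder}), and you obtain convergence of a bounded monotone net by deriving the Cauchy property from the key inequality $d_{p,q}(x_m, x_n) \leqslant |w(x_n) - w(x_m)|$ for comparable indices together with Cauchy completeness of $(X, d_{p,q})$. The only difference is one of explicitness: you spell out the passage from comparable pairs to arbitrary pairs of indices $m, n$ through a common upper bound $k$ in the directed index set, a step the paper leaves implicit when it asserts that $(x_n)_n$ inherits the Cauchy property from $(w(x_n))_n$.
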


\begin{proof}
Let $(x_n)_n$ be a monotone net. 

\eqref{thm:cvi1} $\Rightarrow$ \eqref{thm:cvi1bis}. 
Suppose that $x_n \to x$. 
Using the monotonicity of $(x_n)_n$ and the fact that principal ideals and principal filters are closed by Proposition~\ref{prop:closedorder}, we easily deduce that $x$ is the supinf of $(x_n)_n$. 

\eqref{thm:cvi1bis} $\Rightarrow$ \eqref{thm:cvi2} is straightforward. 

\eqref{thm:cvi2} $\Rightarrow$ \eqref{thm:cvi3}. 
If $\{ x_n \}_n$ is bounded, there are some $\ell, u \in X$ with $\ell \leq_{p, q} x_n \leq_{p, q} u$ for all $n$. 
Since $w$ is order-preserving, we have $w(\ell) \leqslant w(x_n) \leqslant w(u)$ for all $n$, so $\{ w(x_n) \}_n$ is bounded. 

\eqref{thm:cvi3} $\Rightarrow$ \eqref{thm:cvi1}. 
Suppose that $\{ w(x_n) \}_n$ is bounded in $\mathbb{R}$. 
We show that $(x_n)_n$ is a Cauchy net. 
If $m \leqslant n$, then $x_m \leq_{p, q} x_n$ or $x_n \leq_{p, q} x_m$, from which we deduce that $d_{p, q}(x_m, x_n) \leqslant | w(x_n) - w(x_m) |$. 
The real net $(w(x_n))_n$ is monotone and bounded, hence is a Cauchy net. 
So $(x_n)_n$ is a Cauchy net; since $(X, d_{p, q})$ is Cauchy complete by hypothesis, we deduce that $(x_n)_n$ converges to some $x$. 

Let us show that the $d_{p, q}$-topology and the partial order $\leq_{p, q}$ are compatible. 
If $x$ is the supinf of a monotone net $(x_n)_n$, then the previous point shows that $(x_n)_n$ converges to some $y$. 
By the first part of the proof, $y$ is necessarily the supinf of $(x_n)_n$, i.e.\ $y = x$. 

The fact that $(X, \leq_{p, q})$ is conditionally monotone-complete follows from the implication \eqref{thm:cvi2} $\Rightarrow$ \eqref{thm:cvi1bis}. 
\end{proof}

\section{The second intrinsic pseudo-metric}\label{sec:dist2}

In this section, given a partial pseudo-metric space $(X, p)$, we introduce a pseudo-metric $d_{p, 2}$ that generates the same topology as the pseudo-metrics $d_{p, 0}$ and $d_{p, 1}$ of Example~\ref{ex:q0q1}, without being equivalent to them. 
The proof of the triangle inequality is not straightforward, so we need the following technical lemma first. 

\begin{lemma}\label{lem:dist2}
Let $(X, p)$ be a partial pseudo-metric space. 
Then, for all $x, y, z \in X$, we have 
\begin{equation}\label{eq:lemdist2}
2 \Gamma + \Theta \leqslant 2 \sqrt{\Delta},  
\end{equation}
where 
\begin{align*}
\Gamma &:= (p(x, z) - w_p(z))(p(y, z) - w_p(z)), \\
\Theta &:= (w_p(z) - w_p(x))(w_p(y) - w_p(z)), \\
\Delta &:= (p(x, z)^2 - w_p(x) w_p(z))(p(y, z)^2 - w_p(y) w_p(z)). 
\end{align*}
\end{lemma}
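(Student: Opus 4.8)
The plan is to treat the five numbers $a := p(x,z)$, $b := p(y,z)$, $s := w_p(x)$, $t := w_p(y)$ and $c := w_p(z)$ as the only data, and to prove \eqref{eq:lemdist2} as a purely algebraic statement about them. By symmetry of $p$ and the defining axioms of a partial pseudo-metric, these satisfy $0 \le s \le a$, $0 \le t \le b$ and $0 \le c \le \min(a,b)$; in particular $a^2 - sc \ge a(a-c) \ge 0$ and likewise $b^2 - tc \ge 0$, so that $\Delta \ge 0$ and $\sqrt{\Delta}$ is well defined. Rewriting $\Gamma$, $\Theta$ and $\Delta$ in these terms, the inequality becomes $g(s,t) \ge 0$, where
\[
g(s,t) := 2\sqrt{(a^2 - sc)(b^2 - tc)} - 2(a-c)(b-c) - (c-s)(t-c).
\]
First I would freeze $a,b,c$ and regard $g$ as a function on the compact rectangle $[0,a] \times [0,b]$. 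Since the pair of interest $(w_p(x), w_p(y))$ lies in this rectangle, it suffices to show that $g \ge 0$ everywhere on it.

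The key observation is that $g$ is concave in $s$ for each fixed $t$, and concave in $t$ for each fixed $s$. Indeed, $2\sqrt{\Delta} = 2\sqrt{a^2 - sc}\,\sqrt{b^2 - tc}$ factors as a nonnegative constant (in $s$) times $\sqrt{a^2 - sc}$, and $s \mapsto \sqrt{a^2 - sc}$ is concave, being the square root of a nonnegative affine function; meanwhile $2(a-c)(b-c)$ is independent of $s$ and $(c-s)(t-c)$ is affine in $s$, so the subtracted part is affine in $s$. Hence $g(\cdot, t)$ is concave, and symmetrically so is $g(s, \cdot)$. I would emphasize that $g$ is \emph{not} jointly concave: the bilinear term $st$ hidden in $(c-s)(t-c)$ obstructs this. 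Separate concavity is nonetheless exactly what is needed, because a continuous function on a rectangle that is concave in each variable separately attains its minimum at one of the four corners (minimize one variable at a time, each time landing on an endpoint).

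It then remains to verify $g \ge 0$ at the four corners $(s,t) \in \{0,a\} \times \{0,b\}$, and each case collapses to an elementary inequality. For $(0,0)$ the claim reduces to $c^2 \le 2c(a+b)$; for $(a,b)$, writing $W := (a-c)(b-c) \ge 0$, it reduces to $W \le 4ab$; and for $(0,b)$ — with $(a,0)$ symmetric under $a \leftrightarrow b$, $s \leftrightarrow t$ — it reduces, after squaring, to $(b-c)(2a-c)^2 \le 4a^2 b$. All three follow at once from $0 \le c \le a$ and $0 \le c \le b$, for instance via $(a-c)(b-c) \le ab$, $(2a-c)^2 \le 4a^2$ and $b - c \le b$.

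I expect the main obstacle to be recognizing that the right move is this variational corner reduction rather than a frontal assault: squaring \eqref{eq:lemdist2} into the equivalent polynomial inequality $(2\Gamma + \Theta)^2 \le 4\Delta$ produces an unwieldy quartic in $s$ and $t$ whose positivity is far from transparent, whereas isolating the separate concavity of $g$ (and taking care that joint concavity genuinely fails) localizes the whole problem to the four corners, where it becomes immediate.
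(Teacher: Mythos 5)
Your proof is correct, and it takes a genuinely different route from the paper's. The paper proceeds by direct algebra: it splits on the sign of $\Theta$, handles $\Theta\leqslant 0$ via $\Gamma^2\leqslant\Delta$ (from $(p(x,z)-w_p(z))^2\leqslant p(x,z)^2-w_p(x)w_p(z)$), and in the case $\Theta>0$ assumes without loss of generality $w_p(x)\leqslant w_p(z)\leqslant w_p(y)$ and works through a long chain of polynomial estimates culminating in $(\Gamma+\Theta/2)^2\leqslant\Delta$. You instead freeze $a=p(x,z)$, $b=p(y,z)$, $c=w_p(z)$ and regard the claim as the nonnegativity of $g(s,t)$ on the rectangle $[0,a]\times[0,b]$ containing $(w_p(x),w_p(y))$; separate concavity in $s$ and in $t$ (nonnegative constant times $\sqrt{\text{affine}}$ plus an affine remainder in each variable) legitimately localizes the minimum to the four corners, since one can minimize one variable at a time and a concave function on an interval is minimized at an endpoint. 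Your corner checks are right: $(0,0)$ reduces to $c^2\leqslant 2c(a+b)$, $(a,b)$ to $(a-c)(b-c)\leqslant 4ab$, and $(0,b)$ (after squaring two nonnegative quantities) to $(b-c)(2a-c)^2\leqslant 4a^2b$, all immediate from $0\leqslant s\leqslant a$, $0\leqslant t\leqslant b$, $0\leqslant c\leqslant\min(a,b)$, which are indeed the only constraints needed from the partial pseudo-metric axioms. What your argument buys is brevity and transparency: no case split, no ordering assumption, and no delicate cancellations. What the paper's buys is that it stays entirely within elementary manipulation of the submodularity-type inequalities without invoking any variational or convexity principle. Both are complete proofs.
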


\begin{proof}
For ease of notation, we write $p_{x, y}$ for $p(x, y)$, $w_{x}$ for $w_p(x)$, etc. 
We distinguish between the cases $\Theta \leqslant 0$ and $\Theta > 0$. 

\textit{Case}~1: $\Theta \leqslant 0$. 
Using $\x + \z \leqslant 2 \xoz$, we obtain $(\xoz - \z)^2 \leqslant \xoz^2 - \x \z$. 
Similarly, $(\yoz - \z)^2 \leqslant \yoz^2 - \y \z$. 
This implies that $\Gamma^2 \leqslant \Delta$.
Since $\Theta \leqslant 0$ we get Equation~\eqref{eq:lemdist2}. 

\textit{Case}~2: $\Theta > 0$. 
We can suppose without loss of generality that $\x \leqslant \y$, so that $\x \leqslant \z \leqslant \y$. 
Elementary calculus gives 
\begin{align*}
\Gamma^2 = &\left( \xoz^2 - \x \z + \z ( \x + \z - 2 \xoz) \right) \\
  &\times \left( \yoz^2 - \y \z + \z ( \y + \z - 2 \yoz) \right) \\
= {}& \Delta - \z \left( \xoz^2 - \x \z \right) \left( 2 \yoz - \y - \z \right) \\
  &- \z \left( \yoz^2 - \y \z \right) \left( 2 \xoz - \x - \z \right) \\
  &+ \z^2 \left( 2 \xoz - \x - \z \right) \left( 2 \yoz - \y - \z \right). 
\end{align*}
We get 
\begin{align*}
\Gamma^2 - \Delta \leqslant &- \z^2 \left( \xoz - \x \right) \left( 2 \yoz - \y - \z \right) \\
  &- \y \z \left( \yoz - \z \right) \left( 2 \xoz - \x - \z \right) \\ 
  &+ \z^2 \left( 2 \xoz - \x - \z \right) \left( 2 \yoz - \y - \z \right) \\
= &- \y \z \left( \yoz - \z \right) \left( 2 \xoz - \x - \z \right) \\
  &+ \z^2 \left( \xoz - \z \right) \left( 2 \yoz - \y - \z \right) \\
= &- \y \z \Gamma - \y \z \left( \yoz - \z \right) \left( \xoz - \x \right) \\
  &+ \z^2 \Gamma + \z^2 \left( \xoz - \z \right) \left( \yoz - \y \right). \\
\end{align*}
Consider now the term $T = \z^2 \left( \xoz - \z \right) \left( \yoz - \y \right)$ on the right hand side of the previous inequality. 
Since $\x \leqslant \z \leqslant \y$ we have 
\begin{align*}
T &\leqslant \z^2 \left( \xoz - \x \right) \left( \yoz - \y \right) \\ 
&\leqslant \y \z \left( \xoz - \x \right) \left( \yoz - \y \right) \\ 
&= \y \z \left( \xoz - \x \right) \left( \yoz - \z \right) - \y \z \left( \xoz - \x \right) \left( \y - \z \right). 
\end{align*}
Hence, 
\[
{ \Gamma^2 - \Delta } \leqslant { - \y \z \Gamma + \z^2 \Gamma - \y \z \left( \xoz - \x \right) \left( \y - \z \right) }.
\]
For $\Theta$ we give the following (rough) upper bound: 
\begin{align*}
\Theta &= -\z^2 - \x \y + \x \z + \y \z \\
&\leqslant \x \z + \y \z \leqslant 2 \y \z \leqslant 4 \y \z,
\end{align*}
so that 
\[
{ \frac{1}{4} \Theta^2 } \leqslant { \y \z \Theta } \leqslant { \y \z (\xoz - x) (\y - \z) }. 
\]
This implies that 
\begin{align*}
\Gamma^2 - \Delta + \frac{1}{4} \Theta^2 &\leqslant - \y \z \Gamma + \z^2 \Gamma 
\end{align*}
so that 
\[
{ \Gamma^2 - \Delta + \frac{1}{4} \Theta^2 + \Theta \Gamma } \leqslant { - \x (\y - \z )\Gamma } \leqslant { 0 }. 
\]
This proves that $(\Gamma + \Theta/2)^2 = \Gamma^2 + \frac{1}{4} \Theta^2 + \Theta \Gamma \leqslant \Delta$, i.e.\ that $2\Gamma \leqslant -\Theta + 2 \sqrt{\Delta}$, which is the desired result. 
\end{proof}

\begin{theorem}\label{thm:dist2}
Let $(X, p)$ be a partial pseudo-metric space. 
Then the map $d_{p, 2} : X \times X \to \mathbb{R}_+$ defined by 
\[
d_{p, 2}(x, y) = \sqrt{p(x, y)^2 - w_p(x) w_p(y) }, 
\]
is a pseudo-metric on $X$. 
\end{theorem}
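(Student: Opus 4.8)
The plan is to verify the four defining properties of a pseudo-metric; the first three are immediate, while the triangle inequality is the substance and will rest entirely on Lemma~\ref{lem:dist2}.

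First I would check that $d_{p, 2}$ is real-valued and nonnegative. Since $(X, p)$ is a partial pseudo-metric space, we have $0 \leqslant w_p(x) \leqslant p(x, y)$ and $0 \leqslant w_p(y) \leqslant p(x, y)$, so that $w_p(x) w_p(y) \leqslant p(x, y)^2$; hence the radicand is nonnegative and $d_{p, 2}(x, y) \geqslant 0$. The equality $d_{p, 2}(x, x) = 0$ follows from $p(x, x) = w_p(x)$, and symmetry $d_{p, 2}(x, y) = d_{p, 2}(y, x)$ follows from the symmetry of $p$.

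For the triangle inequality $d_{p, 2}(x, y) \leqslant d_{p, 2}(x, z) + d_{p, 2}(z, y)$, since both sides are nonnegative it suffices to prove the squared inequality. The starting observation is that, in the notation of Lemma~\ref{lem:dist2}, $\Delta = d_{p, 2}(x, z)^2 \, d_{p, 2}(z, y)^2$, and since both factors are nonnegative this gives $2 \sqrt{\Delta} = 2\, d_{p, 2}(x, z)\, d_{p, 2}(z, y)$, precisely the cross term of $\bigl( d_{p, 2}(x, z) + d_{p, 2}(z, y) \bigr)^2$. To bring in the left-hand side, I would invoke the submodularity of $p$, namely $p(x, y) + w_p(z) \leqslant p(x, z) + p(z, y)$; combined with $w_p(z) \leqslant p(x, z)$ this yields $0 \leqslant p(x, y) \leqslant p(x, z) + p(z, y) - w_p(z)$. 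Squaring this bound (legitimate since both sides are nonnegative) and subtracting $w_p(x) w_p(y)$, a direct expansion shows that
\[
d_{p, 2}(x, y)^2 \leqslant d_{p, 2}(x, z)^2 + d_{p, 2}(z, y)^2 + \bigl( 2 \Gamma + \Theta \bigr),
\]
with $\Gamma$ and $\Theta$ exactly as in Lemma~\ref{lem:dist2}. Applying that lemma, $2 \Gamma + \Theta \leqslant 2 \sqrt{\Delta} = 2\, d_{p, 2}(x, z)\, d_{p, 2}(z, y)$, so the right-hand side collapses to $\bigl( d_{p, 2}(x, z) + d_{p, 2}(z, y) \bigr)^2$, and taking square roots gives the triangle inequality.

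The main obstacle is the polynomial identity behind the displayed inequality: one must verify that the excess of $\bigl( p(x, z) + p(z, y) - w_p(z) \bigr)^2 - w_p(x) w_p(y)$ over $d_{p, 2}(x, z)^2 + d_{p, 2}(z, y)^2$ is exactly the combination $2 \Gamma + \Theta$. This is a careful but routine expansion, and it is precisely the reason Lemma~\ref{lem:dist2} was isolated around that particular combination. A secondary point not to overlook is the nonnegativity of $p(x, z) + p(z, y) - w_p(z)$, which licenses the squaring step above.
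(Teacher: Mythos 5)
Your proposal is correct and follows essentially the same route as the paper's proof: square the submodularity inequality $p(x,y) \leqslant p(x,z) + p(z,y) - w_p(z)$, identify the excess over $d_{p,2}(x,z)^2 + d_{p,2}(z,y)^2$ as $2\Gamma + \Theta$, and invoke Lemma~\ref{lem:dist2} to bound it by $2\sqrt{\Delta} = 2\,d_{p,2}(x,z)\,d_{p,2}(z,y)$. The claimed polynomial identity checks out, and your explicit remarks on the nonnegativity of the radicand and of $p(x,z)+p(z,y)-w_p(z)$ are correct and only implicit in the paper.
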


We call $d_{p, 2}$ the \textit{second} \textit{intrinsic pseudo-metric} on $(X, p)$. 

\begin{proof}
For ease of notation, we write $p_{x, y}$ for $p(x, y)$, $w_{x}$ for $w_p(x)$, etc. 
Let $x, y, z \in X$, and let us show that $d_{p, 2}(x, y) \leqslant d_{p, 2}(x, z) + d_{p, 2}(z, y)$. 
With the submodularity of $p$ we have 
\[
\xoy \leqslant \xoz + \yoz - \z, 
\]
so that
\[
\xoy^2 \leqslant \xoz^2 + \yoz^2 + 2 \Gamma - \z^2, 
\]
where $\Gamma := (\xoz - \z)(\yoz - \z)$. 
Using Lemma~\ref{lem:dist2} this implies 
\[
\xoy^2 + \z^2 \leqslant \xoz^2 + \yoz^2 + (\x - \z)(\y - \z) + 2 \sqrt{\Delta},  
\]
where $\Delta := (\xoz^2 - \x \z)(\yoz^2 - \y \z)$. 
This rewrites as  
\[
\xoy^2 - \x \y \leqslant \xoz^2 - \x \z + \yoz^2 - \y \z + 2 \sqrt{\Delta}, 
\]
i.e.\ 
\[
d_{p, 2}(x, y)^2 \leqslant d_{p, 2}(x, z)^2 + d_{p, 2}(z, y)^2 + 2 d_{p, 2}(x, z) d_2(z, y), 
\]
which yields $d_{p, 2}(x, y) \leqslant d_{p, 2}(x, z) + d_{p, 2}(z, y)$. 
Moreover, $d_{p, 2}(x, y) = d_{p, 2}(y, x)$ and $d_{p, 2}(x, x) = 0$, for all $x, y \in X$. 
This shows that $d_{p, 2}$ is a pseudo-metric. 
\end{proof}

\begin{proposition}
Let $(X, p)$ be a partial pseudo-metric space. 
Then 
\[
d_{p, 0} \leqslant 2 d_{p, 1} \leqslant 2 (d_{p, 0} \wedge d_{p, 2}). 
\]
In particular, $d_{p, 0}$ are $d_{p, 1}$ are equivalent. 
\end{proposition}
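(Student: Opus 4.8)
The plan is to reduce the displayed chain of inequalities to three separate pointwise comparisons between the pseudo-metrics $d_{p,0}$, $d_{p,1}$, $d_{p,2}$. Fix $x, y \in X$ and set $a := p(x, y)$, $u := w_p(x)$, $v := w_p(y)$. By the definition of a partial pseudo-metric we have $0 \leqslant u \leqslant a$ and $0 \leqslant v \leqslant a$ (using the symmetry of $p$ for the second bound), hence also $\frac{u+v}{2} \leqslant a$. Without loss of generality I would assume $u \leqslant v$, so that $u \wedge v = u$. With this notation $d_{p,0}(x,y) = a - u$, $d_{p,1}(x,y) = a - \frac{u+v}{2}$, and $d_{p,2}(x,y) = \sqrt{a^2 - uv}$, and note that $d_{p,1}(x,y) \geqslant 0$ by the inequality $\frac{u+v}{2} \leqslant a$ just recorded.

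First I would dispatch the two elementary comparisons with $d_{p,0}$. A direct computation gives $d_{p,0}(x,y) - d_{p,1}(x,y) = \frac{v - u}{2} \geqslant 0$, proving $d_{p,1} \leqslant d_{p,0}$, and $2 d_{p,1}(x,y) - d_{p,0}(x,y) = a - v \geqslant 0$ (using $v = u \vee v \leqslant a$), proving $d_{p,0} \leqslant 2 d_{p,1}$. Together these yield the left inequality of the statement together with one half of the right inequality, namely $d_{p,0} \leqslant 2 d_{p,1} \leqslant 2 d_{p,0}$.

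The remaining step, and the only one demanding any care, is the comparison $d_{p,1} \leqslant d_{p,2}$. Since both sides are nonnegative I would square and show $\bigl(a - \frac{u+v}{2}\bigr)^2 \leqslant a^2 - uv$. After expanding, this is equivalent to $a(u+v) - uv - \frac{(u+v)^2}{4} \geqslant 0$; then, using $a \geqslant \frac{u+v}{2}$ to bound $a(u+v) \geqslant \frac{(u+v)^2}{2}$, the left-hand side is at least $\frac{(u+v)^2}{4} - uv = \frac{(u-v)^2}{4} \geqslant 0$. This gives $2 d_{p,1} \leqslant 2 d_{p,2}$, which combined with the previous paragraph delivers $2 d_{p,1} \leqslant 2(d_{p,0} \wedge d_{p,2})$ and completes the chain.

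Finally, the equivalence of $d_{p,0}$ and $d_{p,1}$ is immediate: from $d_{p,1} \leqslant d_{p,0} \leqslant 2 d_{p,1}$ each pseudo-metric is dominated by a fixed multiple of the other, so they induce the same topology. The main obstacle is the squared inequality for $d_{p,2}$, where one must correctly exploit the constraints $u, v \leqslant a$ to replace $a(u+v)$ by $\frac{(u+v)^2}{2}$; the rest is routine bookkeeping.
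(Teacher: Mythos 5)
Your proof is correct and follows essentially the same route as the paper: the elementary comparisons $d_{p,1}\leqslant d_{p,0}\leqslant 2d_{p,1}$ are checked directly, and $d_{p,1}\leqslant d_{p,2}$ is obtained by squaring and reducing to an algebraic inequality that is settled using $w_p(x),w_p(y)\leqslant p(x,y)$ (your bound $a(u+v)\geqslant\tfrac{(u+v)^2}{2}$ versus the paper's $p(u+v)\geqslant u^2+v^2$ are just two packagings of the same step, both ending at $(u-v)^2\geqslant 0$). No gaps.
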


\begin{proof}
To lighten the notations, we write $p_{x, y}$ for $p(x, y)$, $w_{x}$ for $w_p(x)$, etc. 
The inequalities $d_{p, 0} \leqslant 2 d_{p, 1} \leqslant 2 d_{p, 0}$ are easily shown. 
Now let us prove that $d_{p, 1} \leqslant d_{p, 2}$. 
Let $x, y \in X$. 
It can be seen that 
\[
p_{x, y} (w_{x} + w_{y}) \geqslant w_{x}^2 + w_{y}^2. 
\]
This implies 
\begin{align*}
4 p_{x, y} (w_{x} + w_{y}) &\geqslant 4 w_{x}^2 + 4 w_{y}^2 \\
&\geqslant w_{x}^2 + w_{y}^2  + 6 w_{x} w_{y} + 3(w_{y} - w_{x})^2 \\
&\geqslant w_{x}^2 + w_{y}^2  + 6 w_{x} w_{y}. 
\end{align*}
The latter inequality is equivalent to 
\[
4 (p_{x, y}^2 - w_{x} w_{y}) \geqslant (2 p_{x, y} - w_{x} - w_{y})^2, 
\]
so that 
$
d_{p, 2}(x, y) \geqslant d_{p, 1}(x, y)
$. 
\end{proof}

We delay to the next section the data of a counterexample showing that $d_{p, 2}$ is not equivalent to $d_{p, 0}$ and $d_{p, 1}$ in general (see Example~\ref{ex:counter}), even though all these pseudo-metrics give rise to the same topology, as asserted by the following result. 

\begin{proposition}\label{prop:lim2}
Let $(X, p)$ be a partial pseudo-metric space.  
Then the intrinsic pseudo-metrics $d_{p, 0}$, $d_{p, 1}$, and $d_{p, 2}$ generate the same topology. 
\end{proposition}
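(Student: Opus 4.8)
The plan is to compare the three pseudo-metrics pairwise, leaning on the inequalities established in the preceding proposition. From the chain $d_{p, 0} \leqslant 2 d_{p, 1} \leqslant 2 (d_{p, 0} \wedge d_{p, 2})$ I immediately extract $d_{p, 1} \leqslant d_{p, 0} \leqslant 2 d_{p, 1}$, so that $d_{p, 0}$ and $d_{p, 1}$ are equivalent and hence generate the same topology. It therefore remains only to show that $d_{p, 2}$ generates this same topology, and I would do this by proving that the identity map is continuous in both directions between $(X, d_{p, 1})$ and $(X, d_{p, 2})$.

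One direction is free: the same chain of inequalities yields $d_{p, 1} \leqslant d_{p, 2}$, so the identity $(X, d_{p, 2}) \to (X, d_{p, 1})$ is $1$-Lipschitz, and the $d_{p, 2}$-topology is at least as fine as the $d_{p, 1}$-topology. For the reverse direction I would argue via convergence of nets. First note that $d_{p, 1}$ is precisely the intrinsic pseudo-metric attached to the adjoint map $q_1 : (x, y) \mapsto \frac{w(x) + w(y)}{2}$ of Example~\ref{ex:q0q1}, and that the hypothesis of Proposition~\ref{prop:eq} holds for $q_1$, since $p(x, y) + q_1(x, y) \geqslant w(x) + w(y)$ by Lemma~\ref{lem:ineq}. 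Consequently, Proposition~\ref{prop:eq} characterizes $d_{p, 1}$-convergence: a net $(x_n)_n$ converges to $x$ with respect to $d_{p, 1}$ if and only if $p(x_n, x) \to w(x)$ and $w(x_n) \to w(x)$.

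Granting these two limits, a direct computation gives
\[
d_{p, 2}(x_n, x)^2 = p(x_n, x)^2 - w(x_n) w(x) \longrightarrow w(x)^2 - w(x) w(x) = 0,
\]
so $x_n \to x$ with respect to $d_{p, 2}$ as well. This shows that the $d_{p, 1}$-topology is at least as fine as the $d_{p, 2}$-topology. Combining the two directions, $d_{p, 1}$ and $d_{p, 2}$ generate the same topology; together with the equivalence of $d_{p, 0}$ and $d_{p, 1}$, all three intrinsic pseudo-metrics generate the same topology.

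I expect the only genuine step to be the limit computation for $d_{p, 2}$, which is where the convergence characterization of Proposition~\ref{prop:eq} does the real work; everything else is a matter of reading off the inequalities from the preceding proposition. The subtle point to get right is that $d_{p, 2}$ is \emph{not} equivalent to $d_{p, 1}$ in the bi-Lipschitz sense (a counterexample is promised in Example~\ref{ex:counter}), so one cannot hope to close the argument by a two-sided norm estimate as for $d_{p, 0}$ and $d_{p, 1}$; the topological identification must genuinely pass through the description of convergent nets.
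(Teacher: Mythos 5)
Your proof is correct and follows essentially the same route as the paper: both reduce the comparison with $d_{p, 2}$ to the net-convergence criterion of Proposition~\ref{prop:eq} together with the identity $d_{p, 2}(x, y)^2 = p(x, y)^2 - w_p(x) w_p(y)$. The only (harmless) organizational difference is that you get the easy direction from the inequality $d_{p, 1} \leqslant d_{p, 2}$, whereas the paper re-derives the Lipschitz estimate $| w_p(x) - w_p(y) | \leqslant d_{p, 2}(x, y)$ to run the forward implication directly.
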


\begin{proof}
We already know that the pseudo-metrics $d_{p, 0}$ and $d_{p, 1}$ generate the same topology. 
Also, the inequality 
\[
w_p(x)^2 + w_p(y)^2 \leqslant p(x, y)^2 + w_p(x) w_p(y)
\]
holds for all $x, y \in X$. 
(To prove it, it suffices to distinguish between the cases $w_p(x) \leqslant w_p(y)$ and $w_p(x) > w_p(y)$.) 
As a consequence, 
\[
w_p(x)^2 + w_p(y)^2 - 2 w_p(x) w_p(y) \leqslant d_{p, 2}(x, y)^2 = p(x, y)^2 - w_p(x) w_p(y), 
\]
so that
\begin{equation}\label{eq:contnorm2}
| w_p(x) - w_p(y) | \leqslant d_{p, 2}(x, y),  
\end{equation}
for all $x, y \in X$. 
This proves that $w$ is continuous with respect to the pseudo-metric $d_{p, 2}$. 

To prove that the topologies generated by $d_{p, 0}$ and $d_{p, 2}$ coincide, it suffices to show by Proposition~\ref{prop:eq} that a net $(x_n)$ in $X$ converges to $x$ with respect to $d_{p, 2}$ if and only if $w_p(x_n) \to w_p(x)$ and $p(x_n, x) \to w_p(x)$. 
First assume that $x_n$ tends to $x$ with respect to $d_{p, 2}$, i.e.\ that $d_{p, 2}(x_n, x) \to 0$. 
By \eqref{eq:contnorm2}, $w_p(x_n) \to w_p(x)$, so $p(x_n, x) = \sqrt{ d_{p, 2}(x_n, x)^2 + w_p(x_n) w_p(x) } \to w_p(x)$. 
Conversely, assume that $w_p(x_n) \to w_p(x)$ and $p(x_n, x) \to w_p(x)$. 
Then $d_{p, 2}(x_n, x) = \sqrt{ p(x_n, x)^2 - w_p(x_n) w_p(x) }$ tends to $0$, i.e.\ $x_n$ tends to $x$ with respect to $d_{p, 2}$. 
\end{proof}

\section{Pseudo-normed and normed inverse semigroups}\label{sec:is}

\subsection{Reminders on inverse semigroups}

A \textit{semigroup} $(S, +)$ (denoted additively) is a set $S$ equipped with an associative binary relation $+$ (the addition). 
Be aware that we do not suppose the addition to be commutative in general, despite the usage of additive notation. 
An element $e$ of $S$ is \textit{idempotent} if $e + e = e$. 
A semigroup $S$ is \textit{inverse} if the idempotent elements of $S$ commute and if, for all $x \in S$, there is some $y \in S$, called an \textit{inverse} of $x$, such that $x + y + x = x$ and $y + x + y = y$. 
A semigroup is inverse if and only if every element $x$ has a unique inverse, denoted by $x^*$. 
We write $E(S)$ for the commutative subsemigroup of an inverse semigroup $S$ made of its idempotent elements; this is a sup-semilattice.  
An \textit{inverse monoid} is an inverse semigroup with an \textit{identity element}, i.e.\ an element $0$ such that $0 + x = x + 0 = x$, for all $x$. 
On inverse semigroup theory, we refer the reader to the monograph by M.\ V.\ Lawson \cite{Lawson98b}.  
Let us recall some basic facts.

\begin{lemma}
Let $S$ be an inverse semigroup. 
Then $S$ can be equipped with a partial order $\leqslant_S$, compatible with the semigroup structure, and defined by $x \leqslant_S y$ if $y = x + e$ for some $e \in E(S)$. 
Moreover, if $x, y \in S$, then:
\begin{itemize}
  \item $x + x^*$ and $x^* + x$ are idempotent;
  \item $(x^*)^* = x$;
  \item $(x + y)^* = y^* + x^*$;
  \item $x^* = x$ if $x$ is idempotent;
  \item $(x \leqslant_S y) \Leftrightarrow (x^* \leqslant_S y^*) \Leftrightarrow (y = x + y^* + y) \Leftrightarrow (y = y + y^* + x)$. 
\end{itemize}
\end{lemma}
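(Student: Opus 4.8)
The plan is to dispatch the four algebraic identities first, since they are pure semigroup computations that the order statements will then lean on. That $x + x^*$ and $x^* + x$ are idempotent follows by expanding $(x + x^*) + (x + x^*) = x + (x^* + x + x^*) = x + x^*$ via the defining relation $x^* + x + x^* = x^*$, and symmetrically. For $(x^*)^* = x$ I would observe that the pair of relations making $x^*$ an inverse of $x$ is symmetric in $x$ and $x^*$, so $x$ is an inverse of $x^*$, and uniqueness of inverses forces $(x^*)^* = x$. For $(x + y)^* = y^* + x^*$ I would check the two inverse relations for the candidate $y^* + x^*$; for instance $(x + y) + (y^* + x^*) + (x + y) = x + (y + y^*) + (x^* + x) + y$, and commuting the idempotents $y + y^*$ and $x^* + x$ collapses this to $x + y$. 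Finally, if $x$ is idempotent then $x + x + x = x$, so $x$ is its own inverse and $x^* = x$ by uniqueness.

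Next I would verify that $\leqslant_S$ is a partial order compatible with $+$. Reflexivity is clear since $x = x + (x^* + x)$ with $x^* + x \in E(S)$. Transitivity reduces to the fact that the sum of two commuting idempotents is again idempotent, so $y = x + e$ and $z = y + f$ yield $z = x + (e + f)$ with $e + f \in E(S)$. Antisymmetry is the first delicate point: from $y = x + e$ and $x = y + f$ I substitute to get $x = x + (e + f)$, and using the commutativity of the idempotents $e$ and $f$ I deduce $x + e = x$, whence $y = x + e = x$. Compatibility on the left is immediate, since $y = x + e$ gives $a + y = (a + x) + e$. On the right I would exhibit the conjugated idempotent $f := a^* + e + a$ and check, using that $a + a^*$ and $e$ commute together with $a + a^* + a = a$, that $(x + a) + f = y + a$, so that $x + a \leqslant_S y + a$.

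For the chain of equivalences in the last bullet, I would first establish $x \leqslant_S y \Leftrightarrow y = x + y^* + y \Leftrightarrow y = y + y^* + x$: the two backward implications are immediate because $y^* + y$ and $y + y^*$ are idempotent, while the forward implications substitute $y = x + e$, use $y^* = e + x^*$ (from the inverse-of-a-sum identity together with $e^* = e$), and collapse the resulting expression by commuting $e$ with $x^* + x$. The remaining equivalence $x \leqslant_S y \Leftrightarrow x^* \leqslant_S y^*$ then follows by applying $*$ to the identity $y = y + y^* + x$ and invoking $(x + y)^* = y^* + x^*$ and $(x^*)^* = x$, which turns it into exactly the characterization $y^* = x^* + (y^*)^* + y^*$ of $x^* \leqslant_S y^*$. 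The main obstacle throughout is the repeated, careful exploitation of the commutativity of idempotents, most conspicuously in the antisymmetry argument and in right-compatibility, where the conjugated idempotent $a^* + e + a$ is the key device that lets the $a + a^*$ factor be absorbed.
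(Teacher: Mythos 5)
The paper does not actually prove this lemma: its ``proof'' is a one-line citation to Chapter~1 of Lawson's monograph. Your self-contained argument is therefore necessarily a different route in form, though in substance it is the standard textbook development, and essentially all of it is correct: the four algebraic identities, the verification that $\leqslant_S$ is reflexive, transitive and antisymmetric, the two-sided compatibility with addition (the conjugated idempotent $a^* + e + a$ is indeed the right device for right-compatibility), and the overall organization of the chain of equivalences. What your version buys over the paper's is that the reader sees exactly where the commutativity of idempotents is used; what it costs is length, which is presumably why the author outsourced it.

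One imprecision is worth fixing. You claim the backward implication $y = y + y^* + x \Rightarrow x \leqslant_S y$ is ``immediate because $y + y^*$ is idempotent.'' It is not immediate from the definition as stated: the paper defines $x \leqslant_S y$ by $y = x + e$ with the idempotent on the \emph{right}, whereas $y = (y + y^*) + x$ places it on the left. You need either the same conjugation trick you used for right-compatibility --- set $e := x^* + (y + y^*) + x$, check that $e$ is idempotent and that $x + e = (y+y^*) + x + x^* + x = y$ --- or to route through the starred characterization: applying $*$ to $y = y + y^* + x$ gives exactly $y^* = x^* + (y^*)^* + y^*$, i.e.\ $x^* \leqslant_S y^*$ by your first (already proved) characterization, and then $x \leqslant_S y$ follows by applying the implication $a \leqslant_S b \Rightarrow a^* \leqslant_S b^*$ to the pair $x^*, y^*$ together with $(x^*)^* = x$. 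Either repair is one line, and you already have all the needed tools on the table; but as written, ``immediate'' is not justified.
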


\begin{proof}
See \cite[Chapter~1]{Lawson98b}. 
\end{proof}

We also recall the following classical example of inverse semigroup, see \cite[Section~3.4]{Lawson98b}. 

\begin{example}\label{ex:bm}
Let $(G, +, 0)$ be a lattice-ordered group and $G_+$ be its non-negative part, i.e.\ $G_+ := \{ x \in G : x \geqslant 0 \}$.  
On $G_+ \times G_+$, one can define the binary relation $+$ by 
\[
(a, b) + (c, d) = (a - b + b \vee c, d - c + b \vee c).
\]
This makes $G_+ \times G_+$ into an inverse monoid with identity $(0, 0)$, called the
\textit{bicyclic monoid} on $G$. 
It satisfies $(a, b)^* = (b, a)$, and an element $(a, b)$ is idempotent if and only if $a = b$. 
Moreover, the intrinsic order on $G_+ \times G_+$ satisfies $(a, b) \leqslant (c, d)$ if and only if $0 \leqslant c - a = d - b$. 
In particular, for idempotent elements, $(a, a) \leqslant (b, b)$ if and only if $a \leqslant b$. 
Note also that the injection $i_G : G \to G_+ \times G_+$, $x \mapsto (x \vee 0, (-x) \vee 0)$ satisfies
\begin{itemize}
  \item $i_G(0) = (0, 0)$, and
  \item $i_G(-x) = i_G(x)^*$, and
  \item $i_G(x + y) \leqslant i_G(x) + i_G(y)$ (subadditivity),
\end{itemize}
for all $x, y \in G$. 
\end{example}

On an inverse semigroup $S$, we shall denote by $\delta$ the map $S \to S$, $x \mapsto { x + x^* }$, and often write $\delta x$ instead of $\delta(x)$, so that $x = \delta x + x$ and $x^* + \delta x = x^*$, for all $x \in S$. 

\begin{lemma}\label{lem:clifford}
Let $S$ be an inverse semigroup. 
Then the following assertions are equivalent:
\begin{enumerate}
  \item\label{lem:clifford1} $x + x^* = x^* + x$, for all $x \in S$;
  \item\label{lem:clifford2} $e + x = x + e$, for all $e \in E(S)$, $x \in S$;
  \item\label{lem:clifford3} $x + \delta x = x$, for all $x \in S$;
  \item\label{lem:clifford4} $\delta(x + y) = \delta(x) + \delta(y)$, for all $x, y \in S$. 
\end{enumerate}
\end{lemma}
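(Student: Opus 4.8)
The plan is to prove the four assertions equivalent via the implications $(1)\Rightarrow(2)\Rightarrow(3)\Rightarrow(1)$ together with $(2)\Rightarrow(4)\Rightarrow(1)$. Throughout I would freely use the facts recalled just above: idempotents commute, $e^*=e$ for $e\in E(S)$, $(x+y)^*=y^*+x^*$, the identity $x+x^*+x=x$, and the fact that $x+x^*$ and $x^*+x$ lie in $E(S)$. I would dispatch the two easy forward implications first. For $(2)\Rightarrow(3)$, the idempotent $\delta x=x+x^*$ is central by (2), so $x+\delta x=\delta x+x=x+x^*+x=x$. For $(2)\Rightarrow(4)$, I would write $\delta(x+y)=x+y+y^*+x^*=x+\delta y+x^*$ and slide the central idempotent $\delta y$ past $x^*$ to obtain $x+\delta y+x^*=x+x^*+\delta y=\delta x+\delta y$.

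Next I would handle the two ``return'' implications. For $(4)\Rightarrow(1)$, I would specialize (4) at $y=x^*$: since $x+x^*$ is idempotent we have $\delta(x+x^*)=x+x^*$, so (4) reads $x+x^*=(x+x^*)+(x^*+x)$. Setting $e:=x+x^*$ and $f:=x^*+x$, this says $e=e+f$, and the same computation with $x$ replaced by $x^*$ gives $f=f+e$; as $e,f$ are commuting idempotents, $e=e+f=f+e=f$, which is (1). For $(3)\Rightarrow(1)$, from $x+x+x^*=x$ I would take inverses to get $x+x^*+x^*=x^*$ (call it (A)), and apply (3) to $x^*$ to get $x^*+x^*+x=x^*$ (call it (B)); then I would evaluate the single idempotent $(x+x^*)+(x^*+x)$ in two ways, using (A) to simplify it to $x^*+x$ and using (B) to simplify it to $x+x^*$, which forces $x+x^*=x^*+x$.

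The real obstacle is $(1)\Rightarrow(2)$, namely that the Clifford condition forces idempotents to be central; the plan here has two ingredients. First, a general identity valid in \emph{any} inverse semigroup: for $e\in E(S)$,
\[
x + e = (x + e + x^*) + x,
\]
which I would prove directly by expanding the right-hand side to $x+e+x^*+x$ and using that $e$ and the idempotent $x^*+x$ commute, together with $x+x^*+x=x$. Second, I would apply the Clifford identity (1) to the element $a=x+e$: since $a^*=e+x^*$, the equation $a+a^*=a^*+a$ becomes, after simplifying $e+(x^*+x)+e$ to $e+d$ via commuting idempotents,
\[
x + e + x^* = e + d,
\]
where $d:=x^*+x=x+x^*$ (the two coincide by (1)).

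Substituting this relation into the general identity then gives $x+e=(e+d)+x=e+(d+x)$, and since $d+x=x+x^*+x=x$ I would conclude $x+e=e+x$, i.e.\ (2). I expect this last step --- recognizing that applying (1) to the composite element $x+e$ yields exactly the relation $x+e+x^*=e+d$ that collapses the general identity --- to be the crux of the whole lemma; once it is in hand, the remaining implications are short, and the cycle $(1)\Rightarrow(2)\Rightarrow(3)\Rightarrow(1)$ together with $(2)\Rightarrow(4)\Rightarrow(1)$ closes the equivalence of all four conditions.
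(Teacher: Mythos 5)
Your proof is correct, and its crux --- the implication \eqref{lem:clifford1} $\Rightarrow$ \eqref{lem:clifford2} --- rests on exactly the same trick as the paper's: apply the Clifford identity to the composite element $x+e$ (the paper uses $e+x$) and use that idempotents commute to collapse the resulting expression. The surrounding implication graph differs slightly: the paper runs a single cycle \eqref{lem:clifford1} $\Rightarrow$ \eqref{lem:clifford2} $\Rightarrow$ \eqref{lem:clifford4} $\Rightarrow$ \eqref{lem:clifford3} $\Rightarrow$ \eqref{lem:clifford1}, whereas you close \eqref{lem:clifford1} $\Rightarrow$ \eqref{lem:clifford2} $\Rightarrow$ \eqref{lem:clifford3} $\Rightarrow$ \eqref{lem:clifford1} and attach \eqref{lem:clifford2} $\Rightarrow$ \eqref{lem:clifford4} $\Rightarrow$ \eqref{lem:clifford1}. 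Two of your peripheral arguments are genuinely different in flavor and arguably cleaner: for \eqref{lem:clifford3} $\Rightarrow$ \eqref{lem:clifford1} you evaluate the single element $(x+x^*)+(x^*+x)$ in two ways to get the equality outright, where the paper derives $x+x^*\leqslant_S x^*+x$ and its converse and invokes antisymmetry of the natural partial order; and for \eqref{lem:clifford4} $\Rightarrow$ \eqref{lem:clifford1} you specialize at $y=x^*$ and use commutativity of the idempotents $\delta x$ and $\delta(x^*)$ directly, where the paper routes through \eqref{lem:clifford3} first. Both organizations are equally valid; yours trades one extra implication for more self-contained computations.
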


\begin{proof}
\eqref{lem:clifford1} $\Rightarrow$ \eqref{lem:clifford2}. 
Let $e \in E(S)$, $x \in S$. 
Then $e + x = (e + x + x^*) + x = (x + x^* + e) + x = x + (e + x)^* + (e + x) = x + (e + x) + (e + x)^* = x + (e + x + x^* + e) = x + (x + x^* + e) = (x + x^* + x) + e = x + e$. 

\eqref{lem:clifford2} $\Rightarrow$ \eqref{lem:clifford4}. 
Let $x, y \in S$. 
Then $\delta(x + y) = x + \delta y + x^* = \delta y + x + x^* = \delta y + \delta x = \delta x + \delta y$, as required. 

\eqref{lem:clifford4} $\Rightarrow$ \eqref{lem:clifford3}. 
Let $x \in S$. 
Then $x + \delta(x) = x + \delta(x^*) + \delta(x) = x + \delta(x^* + x) = x + x^* + x = x$. 

\eqref{lem:clifford3} $\Rightarrow$ \eqref{lem:clifford1}. 
Let $x \in S$. 
Then $x = x + \delta x$, hence $x^* + x = (x^* + x) + (x + x^*)$. 
Thus, $x + x^* \leqslant_S x^* + x$. 
Analogously, $x^* + x \leqslant_S x + x^*$, so that $x + x^* = x^* + x$. 
\end{proof}

An inverse semigroup satisfying the conditions of the previous lemma is called a \textit{Clifford semigroup}. 
We deduce that an inverse semigroup $S$ is Clifford if and only if the map $\delta$ is an endomorphism of $S$. 
Note that groups and sup-semilattices are always Clifford semigroups. 

\subsection{Pseudo-norm on an inverse semigroup}

Let us start applying to inverse semigroups the concepts and results developed in the previous sections. 

\begin{definition}\label{def:pseudonorm}
A \textit{pseudo-normed inverse semigroup} is an inverse semigroup $(S, +)$ equipped with a \textit{pseudo-norm}, that is a map $S \to \mathbb{R}_+, x \mapsto \| x \|$ such that $(x, y) \mapsto \| x + y^* \|$ is a partial pseudo-metric, in the sense that the following properties are satisfied:
\begin{itemize}
  \item $\| x + y^* \| = \| y + x^* \|$, for all $x, y \in S$;
  \item $\| x + x^* \| \leqslant \| x + y^* \|$, for all $x, y \in S$; 
  \item $\| x + y^* \| + \| z + z^* \| \leqslant \| x + z^* \| + \| z + y^* \|$, for all $x, y, z \in S$. 
\end{itemize}
A \textit{pseudo-normed inverse monoid} is an inverse monoid $(S, +, 0)$ endowed with a map $S \to \mathbb{R}_+, x \mapsto \| x \|$ such that $(S, +, \| \cdot \|)$ is a pseudo-normed inverse semigroup and $\| 0 \| = 0$. 
\end{definition}

\begin{remark}\label{rk:pstar}
On a pseudo-normed inverse semigroup, the map $p^*$ defined as $(x, y) \mapsto \| x^* + y \|$ is also a partial pseudo-metric. 
We have $p^*(x, y) = p(x^*, y^*)$, for all $x, y$. 
However, $p$ and $p^*$ do not coincide in general. 
\end{remark}

\begin{proposition}\label{prop:normproperties}
Let $S$ be a pseudo-normed inverse semigroup, and let $p$ be the partial pseudo-metric $(x, y) \mapsto \| x + y^* \|$. 
If $x, y \in S$, then  
\begin{enumerate}
  \item\label{prop:normproperties0} $x \leqslant_S y \Rightarrow x \leq_p y$;
  \item\label{prop:normproperties2} $x \leq_p y \Rightarrow \| x \| \leqslant \| y \|$; 
  \item\label{prop:normproperties1} $\| x^* \| = \| x \|$; 
  \item\label{prop:normproperties3} $\| \delta x \| \leqslant \| x \|$; 
  \item\label{prop:normproperties4} $\| \delta x \| \vee \| \delta y \| \leqslant \| x + y^* \|$; 
  \item\label{prop:normproperties5} $\| \delta x \| + \| \delta y \| \leqslant 2 \| x + y^* \|$;
  \item\label{prop:normproperties6} $t(\delta x, \delta y) \leqslant t(x, y)$, 
\end{enumerate}
where $t : S \times S \to \mathbb{R}$ is defined by $t(x, y) = \| x^* \| + \| y \| - \| x^* + y \|$. 
\end{proposition}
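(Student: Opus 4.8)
The plan is to record at the outset the two identities that drive every item: first $w_p(x) = p(x,x) = \| x + x^* \| = \| \delta x \|$, and second the representation $\| x \| = \| x + x^* + x \| = \| x + (x^* + x)^* \| = p(x, x^*+x)$, valid because $x^* + x$ is idempotent, hence self-inverse. With these in hand, items \eqref{prop:normproperties1}, \eqref{prop:normproperties3}, \eqref{prop:normproperties4}, \eqref{prop:normproperties5} are short. For \eqref{prop:normproperties1}, I would combine the representation with the symmetry axiom of $p$: $\| x \| = p(x, x^*+x) = p(x^*+x, x) = \|(x^*+x) + x^*\| = \|x^*\|$. For \eqref{prop:normproperties3}, the self-distance axiom $w_p(x) \leqslant p(x,z)$ applied at $z = x^* + x$ gives $\|\delta x\| = w_p(x) \leqslant p(x, x^*+x) = \|x\|$. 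For \eqref{prop:normproperties4}, the same axiom together with symmetry gives $\|\delta x\| = p(x,x) \leqslant p(x,y) = \|x+y^*\|$ and $\|\delta y\| = p(y,y) \leqslant p(y,x) = \|x+y^*\|$. Finally \eqref{prop:normproperties5} is exactly Lemma~\ref{lem:firstprops2} applied to the symmetric submodular map $p$.

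The engine behind the order-theoretic items is the identity
\[
x \leqslant_S y \Rightarrow p(x,y) = w_p(y).
\]
Indeed, writing $y = x + e$ with $e \in E(S)$, one computes $p(x,y) = \|x + e + x^*\|$ and $w_p(y) = \|y + y^*\| = \|x + e + x^*\|$ (using $e + e = e$), so the two agree. Item \eqref{prop:normproperties0} then follows: the self-distance axiom gives $w_p(x) \leqslant p(x,y) = w_p(y)$, and feeding $p(x,y) = p(y,y)$ into the submodularity inequality $p(x,z) + p(y,y) \leqslant p(x,y) + p(y,z)$ yields $p(x,z) \leqslant p(y,z)$ for every $z \in S$, i.e.\ $x \leq_p y$.

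Item \eqref{prop:normproperties2} is the main obstacle, precisely because $\|x\|$ is not itself a value of $w_p$ and so is not directly controlled by $\leq_p$. I would route it through the monotonicity sub-claim $\|x\| \leqslant \|x+e\|$ for every idempotent $e$. To prove the sub-claim, apply submodularity at $a = x$, $b = x^*+x$, $c = x+e$ together with the identity above (which gives $p(x, x+e) = w_p(x+e)$) to cancel the two $w_p(x+e)$ terms, obtaining $\|x\| \leqslant \|(x+e) + (x^*+x)\|$; the absorption fact $(x+e) + (x^*+x) = x+e$ — valid since $x^* + x \leqslant (x+e)^* + (x+e)$ in the semilattice $E(S)$ — then collapses the right-hand side to $\|x+e\|$. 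Granting the sub-claim, \eqref{prop:normproperties2} follows by taking $z = y^*+y$ in the definition of $x \leq_p y$: this gives $\|x + y^* + y\| = p(x, y^*+y) \leqslant p(y, y^*+y) = \|y\|$, while the sub-claim applied with $e = y^* + y$ gives $\|x\| \leqslant \|x + y^* + y\|$, whence $\|x\| \leqslant \|y\|$. The delicate point is the absorption identity, which must be checked carefully from the inverse-semigroup axioms.

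For item \eqref{prop:normproperties6}, I would pass to the auxiliary partial pseudo-metric $p^*(a,b) = \|a^* + b\|$ of Remark~\ref{rk:pstar}. Since $\delta x$ and $\delta y$ are idempotent and $\|x^*\| = \|x\|$ by \eqref{prop:normproperties1}, the desired inequality is equivalent to
\[
\|x^*+y\| + \|\delta x\| + \|\delta y\| \leqslant \|x\| + \|y\| + \|\delta x + \delta y\|.
\]
I would derive this by adding two instances of the submodularity of $p^*$: the first with middle point $\delta y$, using $\delta y + y = y$, yields $\|x^*+y\| + \|\delta y\| \leqslant \|x^* + y + y^*\| + \|y\|$; the second with middle point $\delta x$, using $x^* + \delta x = x^*$ and again \eqref{prop:normproperties1}, yields $\|x^* + y + y^*\| + \|\delta x\| \leqslant \|x\| + \|\delta x + \delta y\|$. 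Summing these and cancelling the common term $\|x^* + y + y^*\|$ gives exactly the displayed inequality. The only subtlety is to choose the two middle points so that the awkward cross term $\|x^* + y + y^*\|$ cancels.
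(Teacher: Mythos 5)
Your proposal is correct and takes essentially the same route as the paper: every item is reduced to the three partial-pseudo-metric axioms of the norm together with the observation that $x \leqslant_S y$ forces $p(x,y) = w_p(y)$, plus elementary inverse-semigroup algebra (idempotents commute, $x+x^*+x=x$). The only differences are cosmetic — a different choice of the middle point in the submodularity inequality for the monotonicity step, and the use of $p^*$ in place of $p$ for the inequality $t(\delta x,\delta y)\leqslant t(x,y)$, which amounts to the paper's computation carried out at starred arguments.
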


\begin{proof}
\eqref{prop:normproperties0}.  
If $x \leqslant_S y$, then $y + y^* = x + y^*$, so that $p(y, y) = \| y + y^* \| = \| x + y^* \| = p(x, y)$. 
This shows that $x \leq_p y$. 

\eqref{prop:normproperties2}. 
At first, we show that $x \leqslant_S y \Rightarrow \| x \| \leqslant \| y \|$. 
So assume that $x \leqslant_S y$. 
Then $y = x + (y^* + y)$, and 
\begin{align*}
\| x \| + \| y^* + y \| &= \| x + (x^* + x) \| + \| (y^* + y) + (y^* + y)^* \| \\
&\leqslant \| x + (y^* + y)^* \| + \| (y^* + y) + (x^* + x) \| \\
&=  \| y \| + \| y^* + y \|, 
\end{align*}
hence $\| x \| \leqslant \| y \|$. 
Now assume that $x \leq_p y$. 
Then $p(x, y) = p(y, y)$, i.e.\ $\| x + y^* \| = \| y + y^* \|$. 
Note that $x \leqslant_S x + y^* + y$, so that $\| x \| \leqslant \| x + y^* + y \|$. 
Thus, $\| x \| + \| y + y^* \| \leqslant \| x + y^* + y \|  + \| y + y^* \| \leqslant \| x + y^* \| + \| y + y^* + y \| = \| y + y^* \| + \| y \|$, so that $\| x \| \leqslant \| y \|$. 

\eqref{prop:normproperties1}. 
Let $x \in S$, and take $y := x^* + x$. 
Then $x = x + y$ and $y = y^*$, which implies $\| x \| = \| x + y \| = \| x + y^* \| = \| y + x^* \| = \| x^* + x + x^* \| = \| x^* \|$. 

\eqref{prop:normproperties3}. 
Let $x \in S$, and take $y := x^* + x$. 
Then $x = x + y$ and $y = y^*$, which implies $\| \delta x \| = \| x + x^* \| \leqslant \| x + y^* \| = \| x + y \| = \| x \|$. 

\eqref{prop:normproperties4} is straightforward from the definitions. 

\eqref{prop:normproperties5} directly follows from \eqref{prop:normproperties4}. 

\eqref{prop:normproperties6}. 
Let $x, y \in S$. 
Then 
\begin{align*}
\| x^*  + y \| + \| \delta x \| + \| \delta y \| &\leqslant \| x^* + \delta x \| + \| \delta x + y \| + \| \delta y \| \\
&= \| x^* \| + \| \delta x + y \| + \| \delta y \| \\
&\leqslant \| x^* \| + \| \delta x + \delta y \| + \| \delta y + y \| \\
&= \| x^* \| + \| \delta x + \delta y \| + \| y \|, 
\end{align*}
so that $t(\delta x, \delta y) \leqslant t(x, y)$. 
\end{proof}

\begin{example}
If $S$ reduces to a group with identity element $0$ (with $x^* = {-x}$), then a pseudo-norm $\| \cdot \|$ on $S$ seen as a monoid corresponds to the concept of \textit{length}, in the sense that the following three conditions hold: 
\begin{itemize}
  \item $\| 0 \| = 0$; 
  \item $\| x \| = \| {-x} \|$, for all $x \in S$;
  \item $\| x + y \| \leqslant \| x \| + \| y \| $, for all $x, y \in S$. 
\end{itemize}
In this case, we say that $S$ is a \textit{pseudo-normed group}. 
\end{example}

\begin{example}
If $S$ reduces to a sup-semilattice (with $x^* = x$ and $x + y = x \vee y$), then a pseudo-norm on $S$ is exactly an \textit{upper valuation} (or a \textit{join semivaluation}, in Nakamura's terms \cite{Nakamura70}), 
i.e.\ a map $\| \cdot \| : S \to \mathbb{R}_+$ satisfying the submodularity condition: 
\[
\| x \vee y \| + \| z \| \leqslant \| x \vee z \| + \| z \vee y \|,
\]
for all $x, y, z \in S$. 
In this case, we call $S$ a \textit{pseudo-normed sup-semilattice}. 
If moreover $S$ is a lattice, i.e.\ if the inf $x \wedge y$ of $\{ x, y \}$ exists for all $x, y \in S$, the submodularity condition holds if and only if $\| \cdot \|$ is order-preserving and satisfies 
\[
\| x \vee y \| + \| x \wedge y \| \leqslant \| x \| + \| y \|,
\]
for all $x, y \in S$. 
See also Ramana Murty and Engelbert \cite{Ramana85}, Schellekens \cite{Schellekens04}, Simovici \cite{Simovici14}. 
\end{example}

\begin{example}[Example~\ref{ex:bm} continued]\label{ex:bm2}
Let $(G, +, 0)$ be a lattice-ordered group equipped with a group pseudo-norm $\| \cdot \|_G$. 
Then we have a pseudo-norm $\| \cdot \|$ on the bicyclic monoid $G_+ \times G_+$ defined by
\[
\| (a, b) \| := \| a - b \|_G,
\]
for all $a, b \in G_+$. 
Indeed, we have $\| (a, b)^* \| = \| (a, b) \|$ and $\| (a, a) \| = 0$, for all $a, b \in G_+$. 
Moreover, 
\begin{align*}
\| (a, b) + (c, d) \| &= \| (a - b + b \vee c, d - c + b \vee c) \| \\
&= \| (a - b) + (c - d) \|_G \\
&\leqslant \| a - b \|_G + \| c - d \|_G = \| (a, b) \| + \| (c, d) \|,
\end{align*}
for all $a, b, c, d \in G_+$, so $\| \cdot \|$ is subadditive. 
From there it is easy to conclude that $\| \cdot \|$ is a pseudo-norm. 
Note also that the injection $i_G$ is norm-preserving, i.e.\ $\| i_G(x) \| = \| x \|_G$, for all $x \in G$. 
\end{example}

\begin{remark}
If the pseudo-norm is \textit{homogeneous}, i.e.\ if $n \| x \| = \| n x \|$ for all $x \in S$ and all positive integers $n$, then $\| e \| = 0$ for every idempotent element $e \in S$.  
In this case, the submodularity condition implies subadditivity, i.e.\ $\| x + y \| \leqslant \| x \| + \| y \|$, for all $x, y \in S$. 
Indeed, with the notations of Proposition~\ref{prop:normproperties}, if $x, y \in S$, then we have $t(\delta(x^*), \delta(y)) = 0$, hence $0 \leqslant t(x^*, y)$, i.e.\ $\| x + y \| \leqslant \| x \| + \| y \|$. 
\end{remark}

\begin{remark}
Subadditivity also holds if $E(S)$ is filtered, in particular if $S$ is an inverse \textit{monoid}. 
Recall that a poset is \textit{filtered} if every two elements have a common lower bound. 
If $E(S)$ is filtered and $x, y \in S$, then $\delta(x^*)$ and $\delta(y)$ have a common lower bound $e \in E(S)$. 
This means  that $\delta(x^*) + e = \delta(x^*)$ and $e + \delta(y) = \delta(y)$. 
By submodularity, 
\[
\| \delta(x^*) + \delta(y) \| \leqslant \| \delta(x^*) + \delta(y) \| + \| e \| \leqslant \| \delta(x^*) \| + \| \delta(y) \|,
\] 
hence $0 \leqslant t(\delta(x^*), \delta(y)) \leqslant t(x^*, y)$. 
This shows that $\| x + y \| \leqslant \| x \| + \| y \|$. 
\end{remark}

\begin{remark}
If $S$ is a commutative inverse semigroup and $v$ is a pseudo-norm (an upper valuation) on the sup-semilattice $E(S)$, then $x \mapsto v(\delta x)$ is a pseudo-norm on $S$. 
\end{remark}


\begin{theorem}\label{thm:dist1}
Let $S$ be a pseudo-normed inverse semigroup. 
Then the maps $d_0, d_1, d_2$ defined on $S \times S$ by 
\begin{align*}
d_0(x, y) &= \| x + y^* \| - \| \delta x \| \wedge \| \delta y \|, \\
d_1(x, y) &= \| x + y^* \| - \frac{\| \delta x \| + \| \delta y \|}{2}, \\
d_2(x, y) &= \sqrt{\| x + y^* \|^2 - \| \delta x \| \| \delta y \|},
\end{align*}
for all $x, y \in S$, are pseudo-metrics on $S$ that generate the same completely regular topology. 
Moreover, if $(x_n)_{n}$ is a net in $S$ and $x \in S$, then $x_n \to x$ if and only if $\| x_n + x^* \| \to \| \delta x \|$ and $\| \delta x_n \| \to \| \delta x \|$. 
\end{theorem}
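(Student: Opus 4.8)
The plan is to recognize that this theorem is a direct specialization of the machinery built in Sections~\ref{sec:interlaced} and~\ref{sec:dist2}. The key first step is to set $p(x, y) := \| x + y^* \|$ and observe that, by the very definition of a pseudo-normed inverse semigroup (Definition~\ref{def:pseudonorm}), $p$ is a partial pseudo-metric on $S$. Since $w_p(x) = p(x, x) = \| x + x^* \| = \| \delta x \|$, the three maps $d_0$, $d_1$, and $d_2$ of the statement coincide respectively with the intrinsic pseudo-metrics $d_{p, 0}$, $d_{p, 1}$, and $d_{p, 2}$ introduced in Example~\ref{ex:q0q1} and Theorem~\ref{thm:dist2}.

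Once this identification is made, the bulk of the statement follows by citation. First, Example~\ref{ex:q0q1} shows that $q_0 : (x, y) \mapsto w_p(x) \wedge w_p(y)$ and $q_1 : (x, y) \mapsto \tfrac{w_p(x) + w_p(y)}{2}$ are adjoint to $(S, p)$, so that $(S, p, q_0)$ and $(S, p, q_1)$ are pseudo-interlaced spaces; Proposition~\ref{prop:pi} then guarantees that $d_0 = d_{p, 0}$ and $d_1 = d_{p, 1}$ are pseudo-metrics inducing a completely regular topology. For $d_2 = d_{p, 2}$ I would invoke Theorem~\ref{thm:dist2}, whose proof already carries the only genuinely delicate computation, namely the triangle inequality via Lemma~\ref{lem:dist2}. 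Finally, Proposition~\ref{prop:lim2} asserts that $d_{p, 0}$, $d_{p, 1}$, and $d_{p, 2}$ generate one and the same topology, which settles both the topological equivalence of the three pseudo-metrics and the complete regularity of this common topology.

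It remains to verify the convergence criterion, which I would obtain by applying Proposition~\ref{prop:eq} to the pseudo-interlaced space $(S, p, q_1)$. The hypothesis $w_p(x) + w_p(y) \leqslant p(x, y) + q_1(x, y)$ there reduces to $\tfrac{w_p(x) + w_p(y)}{2} \leqslant p(x, y)$, which is exactly Lemma~\ref{lem:ineq} and hence always holds; this is the only point requiring a moment's care. Proposition~\ref{prop:eq} then yields that $x_n \to x$ (with respect to $d_{p, 1}$, hence with respect to $d_0, d_1, d_2$ by the equivalence just established) if and only if $p(x, x_n) \to w_p(x)$ and $w_p(x_n) \to w_p(x)$; rewriting these with $p(x, x_n) = \| x_n + x^* \|$, $w_p(x) = \| \delta x \|$, and $w_p(x_n) = \| \delta x_n \|$ gives precisely the stated characterization. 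No substantial obstacle remains at this stage, since the one nontrivial estimate was dispatched back in Theorem~\ref{thm:dist2}.
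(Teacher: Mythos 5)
Your proposal is correct and follows exactly the route of the paper, whose proof of this theorem is literally ``Combine Proposition~\ref{prop:lim2} and Proposition~\ref{prop:eq}''; you have simply made explicit the identification $p(x,y)=\|x+y^*\|$, $w_p(x)=\|\delta x\|$ and the supporting citations (Example~\ref{ex:q0q1}, Proposition~\ref{prop:pi}, Theorem~\ref{thm:dist2}) that the paper leaves implicit. The only detail worth noting is that passing from $p(x,x_n)=\|x+x_n^*\|$ to $\|x_n+x^*\|$ uses the symmetry axiom of Definition~\ref{def:pseudonorm}, which you invoke correctly.
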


\begin{proof}
Combine Proposition~\ref{prop:lim2} and Proposition~\ref{prop:eq}. 
\end{proof}

We call the topology induced by any $d \in \{ d_0, d_1, d_2 \}$ the \textit{norm topology}. 

\begin{definition}\label{def:rad}
Let $S$ be an inverse semigroup and $d$ be a pseudo-metric on $S$. 
We say that $d$ is 
\begin{itemize}
  \item \textit{right-subinvariant} if 
$
d(x + y^*, z + y^*) \leqslant d(x, z)
$, 
for all $x, y, z \in S$; 
  \item \textit{radially-convex} if 
$
d(x, z) = d(x, y) + d(y, z) 
$, 
for all $x, y, z \in S$ with $x \leqslant_S y \leqslant_S z$. 
\end{itemize}
\end{definition}

The property of right-subinvariance will be at stake in Theorem~\ref{thm:exist} to characterize metrics giving rise to a norm on Clifford semigroups. 

If $e$ is an idempotent element in $S$, we write $S_e$ for the inverse subsemigroup of $S$ defined by
\[
S_e = \{ x \in S : x + e = e + x = x \}. 
\]
On $S_e$ we define the map $\| \cdot \|_e : S_e \to \mathbb{R}_+$ by
\[
\| x \|_e = \| x \| - \| e \|. 
\]
Then $(S_e, + , e, \| \cdot \|_e)$ is a pseudo-normed inverse monoid. 
Note that $\| \cdot \|_e$ indeed takes nonnegative values, since $\| x \| \geqslant \| \delta x \| \geqslant \| e \|$, for all $x \in S_e$. 

\begin{proposition}\label{prop:propd}
Let $S$ be a pseudo-normed inverse semigroup. 
Then the following assertions hold: 
\begin{enumerate}
  \item\label{prop:propd1} $d_0(x, e) = \| x \|_{e}$, for all $e \in E(S)$ and $x \in S_e$; 
  \item\label{prop:propd2} $d_0$ and $d_1$ are right-subinvariant; 
  \item\label{prop:propd3} $d_0$ and $d_1$ are radially-convex;
  \item\label{prop:propd7} $d(x, y) = d(\delta x, \delta y)$, for all $x, y \in S$ with a common lower bound and $d \in \{ d_0, d_1, d_2 \}$.
\end{enumerate}
\end{proposition}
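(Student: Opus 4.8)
The plan is to handle the four assertions in turn, writing throughout $p(x,y) = \|x+y^*\|$, so that $w_p(x) = \|\delta x\|$ and $d_0, d_1, d_2$ are the intrinsic pseudo-metrics $d_{p,0}, d_{p,1}, d_{p,2}$ of the previous sections. Assertion \eqref{prop:propd1} is a direct computation: for $e \in E(S)$ and $x \in S_e$, idempotency gives $e^* = e$ and $\delta e = e$, while $x + e = x$ gives $\|x + e^*\| = \|x\|$ and $\|\delta e\| = \|e\|$; since $\|\delta x\| \geqslant \|e\|$ for $x \in S_e$, the minimum $\|\delta x\| \wedge \|\delta e\|$ equals $\|e\|$ and $d_0(x,e) = \|x\| - \|e\| = \|x\|_e$. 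Assertion \eqref{prop:propd3} follows from Example~\ref{ex:q0q1}, where $d_0 = d_{p,0}$ and $d_1 = d_{p,1}$ are shown radially-convex with respect to $\leq_p$: as $x \leqslant_S y$ implies $x \leq_p y$ by Proposition~\ref{prop:normproperties}\eqref{prop:normproperties0}, every $\leqslant_S$-chain is a $\leq_p$-chain, and radial-convexity with respect to $\leqslant_S$ is immediate.

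The substantial assertion is the right-subinvariance \eqref{prop:propd2}. Fixing $y \in S$ and $f := y^* + y \in E(S)$, I would first encode translation by $y^*$ into the map $\tilde p(u,v) := p(u+y^*, v+y^*) = \|u+f+v^*\|$, which is symmetric, whose submodularity is precisely that of $p$ at the translated triple, and which satisfies $w_{\tilde p}(u) = \|\delta(u+y^*)\| = w_p(u+y^*)$; consequently $d_i(x+y^*, z+y^*) = d_{\tilde p, i}(x,z)$ for $i \in \{0,1\}$, and \eqref{prop:propd2} reduces to $d_{\tilde p, i} \leqslant d_{p, i}$. Setting $h := \tilde p - p$, the crux is the pointwise bound $h(x,z) \leqslant h(x,x)$, which unwinds to $\|x+f+z^*\| + \|x+x^*\| \leqslant \|x+f+x^*\| + \|x+z^*\|$, exactly the submodularity of $p$ at $(x+f,\, z,\, x)$; symmetry of $h$ then also yields $h(x,z) \leqslant h(z,z)$. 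For $d_1$ this gives $h(x,z) \leqslant \tfrac12(h(x,x)+h(z,z))$, i.e.\ $d_{\tilde p,1} \leqslant d_{p,1}$, while for $d_0$ I would compare $w_{\tilde p}(x)\wedge w_{\tilde p}(z)$ with $w_p(x)\wedge w_p(z)$, assuming without loss of generality $w_p(x) \leqslant w_p(z)$ and splitting on whether $w_{\tilde p}(x) \leqslant w_{\tilde p}(z)$, each case closing via one of the two bounds on $h$. I expect this reduction to $\tilde p$ together with the single decisive application of submodularity and the min-comparison for $d_0$ to be the main obstacle.

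For \eqref{prop:propd7}, suppose $x$ and $y$ have a common lower bound $\ell$. The identity $\ell \leqslant_S x \Leftrightarrow x = x + x^* + \ell$ gives $x = \delta x + \ell$ and, dually, $y^* = \ell^* + \delta y$, so $x + y^* = \delta x + (\ell+\ell^*) + \delta y = \delta x + \delta\ell + \delta y$. Using compatibility of $\leqslant_S$ with $+$ and $\ell^* \leqslant_S x^*$, I get $\delta\ell = \ell + \ell^* \leqslant_S x + \ell^* \leqslant_S x + x^* = \delta x \leqslant_S \delta x + \delta y$ in the semilattice $E(S)$, so $\delta\ell$ is absorbed and $x + y^* = \delta x + \delta y$ as elements of $S$. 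Hence $p(x,y) = p(\delta x, \delta y)$; and since $w_p(\delta x) = \|\delta(\delta x)\| = \|\delta x\| = w_p(x)$, and similarly for $y$, every $w_p$-term entering $d_0, d_1, d_2$ is unchanged on passing from $(x,y)$ to $(\delta x, \delta y)$, giving $d(x,y) = d(\delta x, \delta y)$ for each $d \in \{d_0, d_1, d_2\}$.
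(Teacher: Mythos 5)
Your proposal is correct and follows essentially the same route as the paper: item \eqref{prop:propd1} by direct computation, item \eqref{prop:propd3} via Example~\ref{ex:q0q1} together with Proposition~\ref{prop:normproperties}\eqref{prop:normproperties0}, and item \eqref{prop:propd7} by showing $x + y^* = \delta x + \delta y$ through absorption of $\delta\ell$. For item \eqref{prop:propd2} your packaging via the translated map $\tilde p$ and the difference $h$ is tidier than the paper's presentation, but it reduces to the same two applications of submodularity (your bounds $h(x,z)\leqslant h(x,x)$ and $h(x,z)\leqslant h(z,z)$ are, up to the substitution $y\mapsto y^*$ and symmetry of $p$, exactly the paper's two displayed inequalities, followed by the same averaging for $d_1$ and case split for $d_0$).
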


\begin{proof}
\eqref{prop:propd1} is straightforward from the definitions. 

\eqref{prop:propd2}. 
Let $x, y, z \in S$. 
We apply the submodularity property twice. 
First, $\| x + (y + y^* + z^*) \| + \| z + z^* \| \leqslant \| x + z^* \| + \| \delta(z + y) \|$. 
And similarly, $\| (x + y + y^*) + z^*) \| + \| x + x^* \| \leqslant \| \delta(x + y) \| + \| x + z^* \|$. 
Taking the average of the two inequalities, we obtain 
\[
\| (x + y) + (z + y)^* \| + \frac{\| \delta x \| + \| \delta z \|}{2} \leqslant \| x + z^* \| + \frac{\| \delta(x + y) \| + \| \delta(z + y) \|}{2},
\] 
which is tantamount to saying that $d_1$ is right-subinvariant. 
If we take the infimum of the two inequalities instead, we obtain the same conclusion for $d_0$. 

\eqref{prop:propd3}. 
See Example~\ref{ex:q0q1}. 


\eqref{prop:propd7}. 
Let $x, y \in S$, and suppose that $z \leqslant_S x$ and $z \leqslant_S y$ for some $z \in S$. 
Then $x = x + x^* + z$ and $y^* = z^* + y + y^*$, hence $x + y^* = x + x^* + z + z^* + y + y^* = \delta x + \delta z + \delta y$. 
Moreover, $z \leqslant_S x$ implies $\delta z \leqslant_S \delta x$, so $x + y^* = \delta x + \delta y = \delta x + (\delta y)^*$. 
Now, it is easily seen that $d(x, y) = d(\delta x, \delta y)$, for $d \in \{ d_0, d_1, d_2 \}$. 
\end{proof}

We say that a pseudo-norm $\| \cdot \|$ on an inverse semigroup $S$ is \textit{cyclically permutable} if $\| x + y \| = \| y + x \|$, for all $x, y \in S$. 
This amounts to say that $p$ and $p^*$ agree, see Remark~\ref{rk:pstar}. 

\begin{corollary}\label{coro:topo}
Let $S$ be a pseudo-normed inverse semigroup endowed with the norm topology. 
Then the maps $x \mapsto x + y$ ($y \in S$) and $x \mapsto \| x \|$ are continuous. 
Moreover, if $\| \cdot \|$ is cyclically permutable, then the maps $x \mapsto x^*$, $x \mapsto \delta(x)$, and $(x, y) \mapsto x + y$ are continuous, and $S$ becomes a topological inverse semigroup. 
\end{corollary}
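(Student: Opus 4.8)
The plan is to carry out every argument with respect to the pseudo-metric $d_1$ (or $d_0$), which by Theorem~\ref{thm:dist1} generates the norm topology, so that topological conclusions are insensitive to the choice among $d_0, d_1, d_2$. Two of the maps I would handle by genuine Lipschitz bounds, and the remaining ones by the convergence criterion of Theorem~\ref{thm:dist1}, namely that $x_n \to x$ if and only if $\| x_n + x^* \| \to \| \delta x \|$ and $\| \delta x_n \| \to \| \delta x \|$.

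For the unconditional claims, continuity of the right translation $x \mapsto x + c$ is immediate: given $c \in S$, apply right-subinvariance of $d_1$ (Proposition~\ref{prop:propd}\eqref{prop:propd2}) with $y := c^*$ to get $d_1(x + c, z + c) \leqslant d_1(x, z)$, so this map is $1$-Lipschitz. For continuity of $\| \cdot \|$ I would establish the Lipschitz bound $|\, \| x \| - \| y \| \,| \leqslant d_0(x, y)$. The only non-routine ingredient is the inequality
\[
\| x \| \leqslant \| x + y^* \| + \| y \| - \| \delta y \|,
\]
for all $x, y \in S$. To obtain it, note $x \leqslant_S x + (y^* + y)$ because $y^* + y$ is idempotent, whence $\| x \| \leqslant \| x + y^* + y \|$ by Proposition~\ref{prop:normproperties}\eqref{prop:normproperties0} and \eqref{prop:normproperties2}; then feed $(x, y^* + y, y)$ into the submodularity inequality for $p$, using $(y^* + y)^* = y^* + y$ to read off $p(x, y^* + y) = \| x + y^* + y \|$, $p(y, y^* + y) = \| y \|$ and $p(y, y) = \| \delta y \|$, which yields the displayed bound. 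Since $\| \delta y \| = w(y) \geqslant w(x) \wedge w(y)$, it follows that $\| x \| - \| y \| \leqslant \| x + y^* \| - w(y) \leqslant d_0(x, y)$, and the symmetry of $d_0$ gives the reverse bound. As $d_0$ generates the norm topology, $\| \cdot \|$ is continuous.

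For the conditional part, assume cyclic permutability, i.e.\ $p = p^*$ (Remark~\ref{rk:pstar}). I would first show that $x \mapsto x^*$ is a $d_1$-isometry: indeed $p(x^*, y^*) = \| x^* + y \| = p^*(x, y) = p(x, y)$ and, taking $y = x$, $w(x^*) = w(x)$, so that $d_1(x^*, y^*) = d_1(x, y)$ and $*$ is continuous. Next, the left translation $b \mapsto x + b$ is continuous by conjugating through $*$: since $x + b = (b^* + x^*)^*$, it is the composite of $*$, the continuous right translation by $x^*$, and $*$ again. Joint continuity of $+$ then follows from the triangle inequality together with right-subinvariance: for $(x_n, y_n) \to (x, y)$ one bounds $d_1(x_n + y_n, x + y) \leqslant d_1(x_n + y_n, x + y_n) + d_1(x + y_n, x + y)$, where the first term is $\leqslant d_1(x_n, x) \to 0$ by right-subinvariance (uniformly in $y_n$) and the second tends to $0$ by continuity of the left translation by $x$. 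Continuity of $\delta(x) = x + x^*$ is then immediate as the composite of $x \mapsto (x, x^*)$ with $+$, and the joint continuity of $+$ together with the continuity of $*$ is precisely the assertion that $S$ is a topological inverse semigroup.

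The main obstacle is the continuity of the norm itself: unlike $w = \| \delta \cdot \|$, which is Lipschitz by the very definition of a pseudo-interlaced space, the map $\| \cdot \|$ is not a coordinate of the partial metric, and its control requires the submodularity estimate above. The secondary subtlety is the reduction of left-multiplication continuity to the already-established right-subinvariance, which is exactly where cyclic permutability—through the isometry property of $*$—is used.
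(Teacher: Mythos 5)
Your proposal is correct and follows essentially the same route as the paper: right-subinvariance of $d_1$ for right translations, the submodularity inequality applied to the triple $(x, y^*+y, y)$ together with $x \leqslant_S x + y^* + y$ for continuity of the norm, and, under cyclic permutability, the fact that $x \mapsto x^*$ is a $d_1$-isometry combined with right-subinvariance to get joint continuity of $+$ and hence of $\delta$. The only difference is cosmetic but pleasant: you package the norm-continuity step as the explicit Lipschitz bound $|\,\|x\|-\|y\|\,| \leqslant d_0(x,y)$, whereas the paper runs the same submodularity estimate through a net argument using the continuity of $p$ and $w$.
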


\begin{proof}
Let $y \in S$. 
The continuity of the map $x \mapsto x + y$ follows from the right-subinvariance of $d_1$ given by Proposition~\ref{prop:propd}\eqref{prop:propd2}. 

Let us show that the pseudo-norm $x \mapsto \| x \|$ is continuous. 
We denote by $p$ the partial pseudo-metric $(x, y) \mapsto \| x + y^* \|$, and by $w$ the map $x \mapsto p(x, x) = \| x  + x^* \|$. 
Let $(x_n)_n$ be a net with $x_n \to x$. 
Using the submodularity of $p$ and the fact that $x \leqslant_S x + x_n^* + x_n^{}$ for all $n$, we have
\[
\| x_n^{} \| \geqslant p(x, x_n^* + x_n^{}) + w(x_n^{}) - p(x, x_n^{}) \geqslant \| x \| + w(x_n^{}) - p(x, x_n^{}). 
\]
Moreover, from the fact that $x_n \leqslant_S x_n + x^* + x$ for all $n$, we also have
\[
\| x_n \| \leqslant \| x_n + x^* + x \| = p(x_n, x^* + x). 
\]
We know that $p$ and $w$ are continuous by Proposition~\ref{prop:pi}, hence 
\[
\| x \| + w(x_n) - p(x, x_n) \to \| x \| + w(x) - p(x, x) = \| x \|,
\]
and 
\[
p(x_n, x^* + x) \to p(x, x^* + x) = \| x \|.
\] 
This shows that $\| x_n \| \to \| x \|$, as required. 

Now assume that $\| \cdot \|$ is cyclically permutable. 
Then we have $d_1(x, y) = d_1(x^*, y^*)$, for all $x, y \in S$. 
Using the right-subinvariance property of Proposition~\ref{prop:propd}, we obtain 
\begin{align*}
d_1(x_1 + x_2, y_1 + y_2) &\leqslant d_1(x_1 + x_2, y_1 + x_2) + d_1(y_1 + x_2, y_1 + y_2) \\
&\leqslant d_1(x_1, y_1) + d_1(x_2^* + y_1^*, y_2^* + y_1^*) \\
&\leqslant d_1(x_1, y_1) + d_1(x_2^*, y_2^*) \\
&= d_1(x_1, y_1) + d_1(x_2, y_2),
\end{align*}
for all $x_1, x_2, y_1, y_2 \in S$. 
This implies the continuity of $(x, y) \mapsto x + y$. 
The continuity of the map $x \mapsto x^*$ is easily deduced from the cyclic permutability of $\| \cdot \|$. 
The continuity of $x \mapsto \delta(x)$ follows. 
\end{proof}

\begin{remark}
See also J.\ D.\ Lawson \cite{Lawson74} for sufficient conditions that make the addition of a semitopological semigroup continuous. 
\end{remark}

As announced at the end of the previous section, the following example shows that the pseudo-metric $d_{p, 2}$ is not equivalent to $d_{p, 0}$ and $d_{p, 1}$ in general. 

\begin{example}\label{ex:counter}
Let $S$ be a pseudo-normed sup-semilattice. 
Assume that the pseudo-norm is not identically zero and that there is a map 
\[
(\mathbb{R}_+ \setminus \{ 0 \}) \times S \to S, (\lambda, x) \mapsto \lambda \cdot x
\]
such that $\lambda \mapsto \lambda \cdot x$ is order-preserving, $1 \cdot x = x$, and $\| \lambda \cdot x \| = \lambda \| x \|$, for all $\lambda > 0$ and $x \in S$. 
Suppose that $d_{2} \leqslant \sqrt{k} d_{0}$, for some $k > 1$. 
Let $\lambda \in \mathbb{R}$ with $1 < \lambda < k/(k-1)$, $x \in S$ with $\| x \| \neq 0$, and $y = \lambda \cdot x$. 
Then $d_{2}(x, y)^2 = \lambda (\lambda - 1) \| x \|^2$ and $d_{0}(x, y)^2 = (\lambda -1)^2 \| x \|^2$. 
Thus, the inequality $d_{2}(x, y)^2 \leqslant k d_{0}(x, y)^2$ implies $\lambda \leqslant k (\lambda-1)$, a contradiction. 
So $d_{2}$ is not equivalent to $d_{0}$. 
\end{example}

\subsection{Normed inverse semigroups and completeness}

We now come to the notion of normed inverse semigroup announced in the Introduction and give our main result. 

\begin{definition}\label{def:norm}
A \textit{normed inverse semigroup} is a pseudo-normed inverse semigroup $(S, +, \| \cdot \|)$ such that the pseudo-norm $\| \cdot \|$ satisfies the \textit{separation condition} ${ \| x \|_e = 0 } \Rightarrow { x = e }$, for all $e \in E(S)$ and $x \in S_e$. 
In this case, $\| \cdot \|$ is called a \textit{norm}. 
\end{definition}

This definition specializes to groups and sup-semilattices, hence we may talk about a \textit{normed group} or a \textit{normed sup-semilattice}. 
Note that a pseudo-normed sup-semilattice is normed if and only if the map $\| \cdot \|$ is increasing. 

\begin{example}
Let $n$ be a positive integer and $\alpha$ be a real number $\geqslant 1$. 
The usual $L^{\alpha}$-norm $\| \cdot \|_{\alpha}$ on $\mathbb{R}^n$ is defined by 
\[
\| x \|_{\alpha} = (\sum_{i = 1}^n | x_i |^{\alpha})^{1/\alpha}. 
\]
Restricted to the sup-semilattice $\mathbb{R}_+^n$, it is a pseudo-norm in the sense of Definition~\ref{def:pseudonorm}. 
To see why this holds, apply Proposition~\ref{prop:fgsub} to the concave order-preserving function $f : \mathbb{R}_+ \to \mathbb{R}_+, r \mapsto r^{1/\alpha} $ and the finite submodular map $p : \mathbb{R}_+^n \times \mathbb{R}_+^n \to \mathbb{R}_+, (x, y) \mapsto \sum_{i=1}^n (x_i \vee y_i)^{\alpha}$ of Example~\ref{ex:topkins}. 
Moreover, $\| \cdot \|_{\alpha}$ satisfies the axiom of separation, so it is a norm in the sense of Definition~\ref{def:norm}. 

The sup-norm $\| \cdot \|_{\infty}$ defined by 
\[
\| x \|_{\infty} = \max_{1 \leqslant i \leqslant n} | x_i |
\]
is a pseudo-norm on $\mathbb{R}_+^n$ but not a norm in the sense of Definition~\ref{def:norm}, since the axiom of separation fails. 
\end{example}

\begin{example}
Let $E$ be a Riesz space, i.e.\ a lattice-ordered real vector space. 
If $x \in E$, we write $| x |$ for the sup of $\{ x, -x \}$, that is $| x | := x \vee (-x)$. 
Suppose that $E$ admits an order-unit $u$, i.e.\ an element $u \in E$ such that, for every $x \in E$, there exists some $\lambda > 0$ such that $| x | \leqslant \lambda u$. 
Then the map $x \mapsto \| x \|_u := \inf \{ \lambda > 0 : | x | \leqslant \lambda u \}$ is a norm (in the usual sense) on $E$, and all norms of this form are equivalent. 
Moreover, one can easily show that $| x | \leqslant | y | \Rightarrow \| x \|_u \leqslant \| y \|_u$ for all $x, y \in E$, and $\| x \vee y \|_u + \| x \wedge y \|_u \leqslant \| x \|_u + \| y \|_u$ for all $x, y \in E_+$, where $E_+$ denotes the sublattice of $E$ made of nonnegative elements of $E$. 
Thus, $\| \cdot \|_u$ is a pseudo-norm on the lattice $E_+$ in the sense of Definition~\ref{def:pseudonorm}. 
Note however that, in general, this is not a norm on $E_+$ in the sense of Definition~\ref{def:norm} (take for instance $E := \mathbb{R}^2$, partially ordered coordinate-wise, and $u := (1, 1)$; then $\| u \|_u = \| (0, 1) \|_u = 1$, while $(0, 1) < u$). 
\end{example}

A pseudo-norm $\| \cdot \|$ on an inverse semigroup $S$ is called \textit{weakly permutable} if $\| e + x \| = \| x + e \|$, for all $e \in E(S)$, $x \in S$. 
Note that this holds if the pseudo-norm is cyclically permutable, or if $S$ is Clifford (see Lemma~\ref{lem:clifford}). 

\begin{theorem}\label{thm:pseudonorm}
Let $S$ be a pseudo-normed inverse semigroup with a weakly permutable pseudo-norm. 
Let $p$ denote the partial pseudo-metric $(x, y) \mapsto \| x + y^* \|$. 
Then the following conditions are equivalent: 
\begin{enumerate}
  \item\label{thm:pseudonorm1} $\| \cdot \|$ is a norm;
  \item\label{thm:pseudonorm2} $\| \cdot \|_e$ is a norm on $S_e$, for all $e \in E(S)$;  
  \item\label{thm:pseudonorm3} $(S, d_1)$ is Tychonoff;
  \item\label{thm:pseudonorm4} $(S, d_1)$ is Hausdorff; 
  \item\label{thm:pseudonorm5} $(S, d_1)$ is $T_0$;
  \item\label{thm:pseudonorm6} $d_1$ is a metric;
  \item\label{thm:pseudonorm7} $p$ is a partial metric;
  \item\label{thm:pseudonorm8} $\leq_p$ is a partial order; 
  \item\label{thm:pseudonorm9} $\leq_p$ and $\leqslant_S$ agree. 
\end{enumerate}
In this case, $(E(S), \| \cdot \|)$ is a normed sup-semilattice and, if $\| \cdot \|$ is cyclically permutable, then $S$ is a Hausdorff topological inverse semigroup. 
\end{theorem}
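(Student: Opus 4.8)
The plan is to split the nine conditions into a topological core, governed entirely by the abstract interlaced-space machinery, and two genuinely semigroup-theoretic additions, then to glue them together through one easy direction and isolate the hard one.

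First I would set $q_1:(x,y)\mapsto\tfrac12(\|\delta x\|+\|\delta y\|)$ and note that $(S,p,q_1)$ is precisely the pseudo-interlaced space whose intrinsic pseudo-metric is $d_1=d_{p,1}$ (Example~\ref{ex:q0q1} and Theorem~\ref{thm:dist1}). A one-line computation with Lemma~\ref{lem:leq} identifies the associated quasiorder with $\leq_p$: $x\leq_{p,q_1}y$ reads $w(x)\le q_1(x,y)\le p(x,y)\le w(y)$, which collapses to $w(x)\vee p(x,y)\le w(y)$, i.e.\ $x\leq_p y$. Feeding $(S,p,q_1)$ into Proposition~\ref{prop:tych} then delivers simultaneously the equivalence of conditions \eqref{thm:pseudonorm3}, \eqref{thm:pseudonorm4}, \eqref{thm:pseudonorm5}, \eqref{thm:pseudonorm6}, \eqref{thm:pseudonorm8} and of the interlaced property $p(x,y)=q_1(x,y)\Rightarrow x=y$. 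Condition~\eqref{thm:pseudonorm7} joins this block for free: by Remark~\ref{rk:eqpq}, $p(x,y)=q_1(x,y)$ happens exactly when $w(x)=p(x,y)=w(y)$, so the interlaced property is literally the defining separation axiom of a partial metric.

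Next I would dispatch \eqref{thm:pseudonorm1}$\Leftrightarrow$\eqref{thm:pseudonorm2} routinely, using $f=e$ to recover \eqref{thm:pseudonorm1} and the identity $\|x\|_{e,f}=\|x\|_f$ on $(S_e)_f$ for the converse. To glue the topological core onto the norm conditions I would run the easy implication \eqref{thm:pseudonorm8}$\Rightarrow$\eqref{thm:pseudonorm1}: if $x\in S_e$ and $\|x\|_e=0$, then $e+x=x$ forces $x^*+e=x^*$, so by weak permutability $p(e,x)=\|e+x^*\|=\|x^*+e\|=\|x^*\|=\|x\|=\|e\|=w(e)$; since $e\leq_S\delta x$ gives $w(e)\le w(x)$ while $w(x)=\|\delta x\|\le\|x\|=\|e\|$ by Proposition~\ref{prop:normproperties}\eqref{prop:normproperties3}, we get $w(e)=w(x)$, hence $w(e)=p(e,x)=p(x,e)=w(x)$, and Lemma~\ref{lem:leq} (its partial-order criterion) yields $x=e$.

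The hard part will be the reverse glue, \eqref{thm:pseudonorm1}$\Rightarrow$\eqref{thm:pseudonorm7}: promoting separation \emph{assumed only against idempotents} to separation against arbitrary elements. The difficulty is structural rather than computational, and is exactly where weak permutability (in place of the full Clifford identity $x+x^*=x^*+x$) is indispensable: a general $x$ need not admit any two-sided local unit, so it lies in no $S_e$ and is untouched by \eqref{thm:pseudonorm1} directly. My strategy would be to push a hypothetical equality $w(x)=p(x,y)=w(y)$ down to the sublattice $E(S)$ via the contraction $\delta$ — using Proposition~\ref{prop:propd}\eqref{prop:propd7} (which gives $d_1(x,y)=d_1(\delta x,\delta y)$ once a common lower bound is in hand) together with the fact that \eqref{thm:pseudonorm1} makes $\|\cdot\|$ increasing on $E(S)$ — to obtain $\delta x=\delta y$, and then to lift this back to $x=y$ by commuting idempotents past elements inside the norm. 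Securing the common lower bound and executing this lift is the delicate heart of the argument; the same mechanism underlies \eqref{thm:pseudonorm8}$\Leftrightarrow$\eqref{thm:pseudonorm9}, where $\leqslant_S\subseteq\leq_p$ is automatic from Proposition~\ref{prop:normproperties}\eqref{prop:normproperties0} and the reverse inclusion, witnessed through the element $x+y^*+y$ (which satisfies $x\leqslant_S x+y^*+y$ and $p(x+y^*+y,y)=w(y)$), forces $w(x+y^*+y)=w(y)$ precisely under the established separation.

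Finally, for the closing assertions I would restrict $\|\cdot\|$ to $E(S)$: if $e\le f$ in $E(S)$ with $\|e\|=\|f\|$, then $f\in S_e$ and $\|f\|_e=0$, so \eqref{thm:pseudonorm1} gives $e=f$, i.e.\ $\|\cdot\|$ is increasing and $(E(S),\|\cdot\|)$ is a normed sup-semilattice. When $\|\cdot\|$ is moreover cyclically permutable, Corollary~\ref{coro:topo} makes $S$ a topological inverse semigroup, and since $(S,d_1)$ is Hausdorff by \eqref{thm:pseudonorm4}, $S$ is a Hausdorff topological inverse semigroup, as claimed.
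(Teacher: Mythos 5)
Your treatment of the topological block via the pseudo-interlaced space $(S,p,q_1)$, the identification $\leq_{p,q_1}\,=\,\leq_p$, the equivalence \eqref{thm:pseudonorm1}$\Leftrightarrow$\eqref{thm:pseudonorm2}, the implication \eqref{thm:pseudonorm8}$\Rightarrow$\eqref{thm:pseudonorm1}, and the two closing assertions all match the paper's proof and are sound. The gap is exactly where you locate "the delicate heart": you never actually close the cycle by deriving \eqref{thm:pseudonorm7}/\eqref{thm:pseudonorm8}/\eqref{thm:pseudonorm9} from \eqref{thm:pseudonorm1}, and neither of the two strategies you sketch works. The route through $\delta$ fails at the lift: even if you obtain $\delta x=\delta y=:f$, the elements $x$ and $y$ need not lie in $S_f$ (only $f+x=x$ holds, not $x+f=x$, outside the Clifford case), so the separation axiom still cannot be applied to them, and Proposition~\ref{prop:propd}\eqref{prop:propd7} needs a common lower bound you have no way to produce. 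The route through $z:=x+y^*+y$ fails for the same structural reason you yourself flagged: $e'+z\neq z$ for $e':=y^*+y$ in general, so $z$ lies in no $S_{e'}$ and the norm axiom is again inapplicable; moreover your claim that separation "forces $w(z)=w(y)$" is backwards --- the equality $w(z)=w(y)$ is a \emph{hypothesis} of the separation axiom \eqref{eq:partial}, not a consequence, and you only establish $w(z)\leqslant w(y)$.

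The paper's trick, which is the one genuinely new idea in this proof, is to prove \eqref{thm:pseudonorm1}$\Rightarrow$\eqref{thm:pseudonorm9} directly by applying the norm axiom to a \emph{manufactured} element that does admit a two-sided local unit. Given $x\leq_p y$, set $e:=y+y^*$ and consider $u:=y+x^*+e$. Then $u\in S_e$ (one checks $e+u=u$ using $e+y=y$, and $u+e=u$ trivially), and weak permutability gives
\[
\|u\|=\|(y+x^*)+e\|=\|e+(y+x^*)\|=\|y+x^*\|=\|x+y^*\|=p(x,y)=p(y,y)=\|e\|,
\]
so $\|u\|_e=0$ and the norm axiom yields $u=e$. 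Adding $y^*$ on the left gives $y^*=y^*+y+x^*+e$, whence $x^*\leqslant_S y^*$ and $x\leqslant_S y$. This shows $\leq_p\,\subseteq\,\leqslant_S$, and since the reverse inclusion is Proposition~\ref{prop:normproperties}\eqref{prop:normproperties0}, condition \eqref{thm:pseudonorm9} follows, antisymmetry of $\leq_p$ comes for free from that of $\leqslant_S$, and the cycle \eqref{thm:pseudonorm1}$\Rightarrow$\eqref{thm:pseudonorm9}$\Rightarrow$\eqref{thm:pseudonorm8}$\Rightarrow$\eqref{thm:pseudonorm1} closes. You should replace your unexecuted reduction to $E(S)$ by this argument.
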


\begin{proof}
The equivalence \eqref{thm:pseudonorm1} $\Leftrightarrow$ \eqref{thm:pseudonorm2} is clear from the definitions. 

The equivalences \eqref{thm:pseudonorm3} $\Leftrightarrow$ \eqref{thm:pseudonorm4} $\Leftrightarrow$ \eqref{thm:pseudonorm5} $\Leftrightarrow$ \eqref{thm:pseudonorm6} $\Leftrightarrow$ \eqref{thm:pseudonorm7} $\Leftrightarrow$ \eqref{thm:pseudonorm8} follow from Proposition~\ref{prop:tych}.

\eqref{thm:pseudonorm8} $\Rightarrow$ \eqref{thm:pseudonorm1}. 
Let $e \in E(S)$ and $x \in S_e$, and suppose that $\| x \| = \| e \|$. 
Then $p(x, e) = \| x + e \|  = \| e + x \| = \| x \| = \| e \| = p(e, e)$. 
Also, from $e + x = x$ we get $e \leqslant_S \delta x$, hence $\| e \| \leqslant \| \delta x \| \leqslant \| x \| = \| e \|$, so that $p(x, x) = p(e, e)$. 
This shows that $x \leq_p e$ and $e \leq_p x$, which yields $x = e$. 
So $\| \cdot \|$ is a norm. 

\eqref{thm:pseudonorm1} $\Rightarrow$ \eqref{thm:pseudonorm9}. 
We already know that $x \leqslant_S y \Rightarrow x \leq_p y$. 
For the reverse implication, suppose that $x \leq_p y$, i.e.\ $p(x, y) = p(y, y)$, and let us show that $x \leqslant_S y$. 
Take $e := y + y^*$. 
From $p(x, y) = p(y, y)$ and the weak permutability of $\| \cdot \|$, we get $\| y + x^* + e \| = \| e \|$. 
Since $y + x^* + e \in S_e$ and $\| \cdot \|$ is a norm, we have $y + x^* + e = e$. 
Thus, $y^* + y + x^* + e = y^* + e = y^*$, so that $x^* \leqslant_S y^*$, i.e.\ $x \leqslant_S y$. 

\eqref{thm:pseudonorm9} $\Rightarrow$ \eqref{thm:pseudonorm8} is obvious.  
\end{proof}

\begin{remark}\label{rk:pstar2}
Under the conditions of the previous theorem, we deduce that, if $\leq_p$ is a partial order, then $\leq_{p^*}$ is a partial order that agrees with $\leq_p$. 
Indeed, if $\leq_p$ is a partial order, then $\leq_p$ and $\leqslant_S$ coincide, so that $x \leq_p y \Leftrightarrow x \leqslant_S y \Leftrightarrow x^* \leqslant_S y^* \Leftrightarrow x^* \leq_p y^* \Leftrightarrow x \leq_{p^*} y$. 
\end{remark}

\begin{remark}
The equivalence \eqref{thm:pseudonorm8} $\Leftrightarrow$ \eqref{thm:pseudonorm9} also holds if one replaces the weak permutability of $\| \cdot \|$ by the property 
\[
\| x + x^* \| = \| x^* + x \|,
\]
for all $x \in S$. 
(Note that, just like the property of weak permutability, this property holds if the pseudo-norm is cyclically permutable, or if $S$ is Clifford.) 
Indeed, suppose that $x \leq_p y$, and let us show that $x \leqslant_S y$, i.e.\ that $y = z$ where we take $z := x + y^* + y$. 
We have $p(y, y) = \| y + y^* \| = \| y^* + y \| \leqslant \| z^* + z \| = \| z + z^* \| = \| x + y^* + y + x^* \| = \| \delta(x + y^*) \| \leqslant \| x + y^* \| = p(x, y) = p(y, y)$. 
Thus, $p(y, y) = \| z + z^* \| = p(z, z)$. 
Moreover, $z + y^* = x + y^*$, so that $p(z, y) = p(x, y) = p(y, y) = p(z, z)$. 
This shows that $y \leq_p z$ and $z \leq_p y$, so $y = z$ since $\leq_p$ is a partial order. 
\end{remark}

\begin{remark}
If $S$ is a pseudo-normed inverse semigroup with a weakly permutable pseudo-norm, then the map $\delta$ is continuous: the condition of cyclic permutability used in Corollary~\ref{coro:topo} is actually not necessary. 
Indeed, let $x \in S$ and $(x_n)_n$ be a net with $x_n \to x$. 
We have $\| x_n + x^* \| \to \| \delta x \|$ and $\| \delta x_n \| \to \| \delta x \|$. 
We need to show that $\delta x_n \to \delta x$, so it remains to prove that $\| \delta x_n + \delta x \| \to \| \delta x \|$. 
Now, using submodularity, 
\[
 \| \delta x_n + \delta x \| + \| \delta x \| \leqslant \| x_n + x^* \| + \| x + x_n^* + \delta x \|. 
\]
By weak permutability, we have $\| x + x_n^* + \delta x \| = \| x + x_n^* \| = \| x_n + x^* \|$. 
Thus, 
\[
\| \delta x \| \leqslant \| \delta x_n + \delta x \| \leqslant 2 \| x_n + x^* \| - \| \delta x \|,
\]
so that $\| \delta x_n + \delta x \| \to \| \delta x \|$, as required. 
\end{remark}

\begin{definition}
A \textit{Banach inverse semigroup} (resp.\ a \textit{Banach group}, a \textit{Banach sup-semilattice}) is a normed inverse semigroup (resp.\ a normed group, a normed sup-semilattice) with a cyclically permutable norm, Cauchy complete with respect to its intrinsic metric $d_1$. 
\end{definition}




In a Banach inverse semigroup, the notion of monotone net is unambiguous, since the partial orders $\leq_p$, $\leq_{p^*}$, and $\leqslant_S$ coincide. 

\begin{theorem}\label{thm:cv}
Let $S$ be a Banach inverse semigroup. 
Then $S$ equipped with the norm topology is a Hausdorff topological inverse semigroup. 
In addition, $S$ is conditionally monotone-complete, and the partial order is compatible with the norm topology. 
Moreover, if $(x_n)_n$ is a monotone net in $S$, then the following conditions are equivalent:
\begin{enumerate}
  \item\label{thm:cv1} $(x_n)_n$ converges;
  \item\label{thm:cv1bis0} $(x_n)_n$ has a supinf;
  \item\label{thm:cv1bis} $\{ x_n \}_n$ is bounded;
  \item\label{thm:cv2} $\{ \| x_n \| \}_n$ is bounded;
  \item\label{thm:cv3} $\{ \| \delta x_n \| \}_n$ is bounded. 
\end{enumerate}
\end{theorem}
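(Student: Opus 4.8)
The plan is to realize $S$, equipped with its partial metric and the canonical adjoint $q_1$ of Example~\ref{ex:q0q1}, as a Cauchy complete interlaced space, and then to transport the conclusions of Theorem~\ref{thm:cvi}, splicing the extra condition \eqref{thm:cv2} into the chain by hand.

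First I would set $p(x, y) = \| x + y^* \|$ and $q_1(x, y) = \frac{\| \delta x \| + \| \delta y \|}{2}$, so that $w(x) = \| \delta x \|$ for all $x \in S$ and $d_1 = d_{p, q_1}$. By Example~\ref{ex:q0q1}, $q_1$ is adjoint to the partial pseudo-metric $p$, so $(S, p, q_1)$ is a pseudo-interlaced space; since $\| \cdot \|$ is a cyclically permutable, hence weakly permutable, norm, $d_1$ is a metric by Theorem~\ref{thm:pseudonorm}, and therefore $(S, p, q_1)$ is interlaced by Proposition~\ref{prop:tych}. The one piece of bookkeeping that must be carried out carefully is the identification of the quasiorder $\leq_{p, q_1}$: from the characterization $x \leq_{p, q_1} y \Leftrightarrow w(x) \leqslant q_1(x, y) \leqslant p(x, y) \leqslant w(y)$ together with Lemma~\ref{lem:leq} one reads off $\leq_{p, q_1} = \leq_p$, and Theorem~\ref{thm:pseudonorm}\eqref{thm:pseudonorm9} then yields $\leq_{p, q_1} = \leqslant_S$. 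In particular the monotone nets considered here are exactly those of Theorem~\ref{thm:cvi}.

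Next, $S$ is Cauchy complete for $d_1 = d_{p, q_1}$ by the very definition of a Banach inverse semigroup, so Theorem~\ref{thm:cvi} applies verbatim to $(S, p, q_1)$. It delivers conditional monotone-completeness of $(S, \leqslant_S)$, compatibility of $\leqslant_S$ with the norm topology, and the equivalences \eqref{thm:cv1} $\Leftrightarrow$ \eqref{thm:cv1bis0} $\Leftrightarrow$ \eqref{thm:cv1bis} $\Leftrightarrow$ \eqref{thm:cv3}, the condition \eqref{thm:cv3} being boundedness of $\{ w(x_n) \}_n = \{ \| \delta x_n \| \}_n$. The first assertion, that $S$ is a Hausdorff topological inverse semigroup, is independent of this and follows at once from the final clause of Theorem~\ref{thm:pseudonorm} (or from Corollary~\ref{coro:topo}), since the norm is cyclically permutable.

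It then remains to weave condition \eqref{thm:cv2} into the cycle. For \eqref{thm:cv1bis} $\Rightarrow$ \eqref{thm:cv2}: boundedness of a monotone net provides an upper bound $u$ with $x_n \leqslant_S u$ for all $n$, whence $x_n \leq_p u$ and $\| x_n \| \leqslant \| u \|$ by Proposition~\ref{prop:normproperties}\eqref{prop:normproperties0} and \eqref{prop:normproperties2}, so that $\{ \| x_n \| \}_n$ is bounded. For \eqref{thm:cv2} $\Rightarrow$ \eqref{thm:cv3}: the inequality $\| \delta x_n \| \leqslant \| x_n \|$ of Proposition~\ref{prop:normproperties}\eqref{prop:normproperties3} shows that boundedness of $\{ \| x_n \| \}_n$ forces boundedness of $\{ \| \delta x_n \| \}_n$. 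Combined with the chain from Theorem~\ref{thm:cvi}, this closes the loop and establishes all five equivalences. I do not anticipate a genuine obstacle: the substantive content is carried entirely by Theorem~\ref{thm:cvi}, and the only delicate point is the order identification $\leq_{p, q_1} = \leqslant_S$, which must be secured before any monotone-net statement can be transported.
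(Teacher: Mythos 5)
Your proposal is correct and follows essentially the same route as the paper: the paper likewise derives conditional monotone-completeness, compatibility, and the equivalences \eqref{thm:cv1} $\Leftrightarrow$ \eqref{thm:cv1bis0} $\Leftrightarrow$ \eqref{thm:cv1bis} $\Leftrightarrow$ \eqref{thm:cv3} from Theorem~\ref{thm:cvi}, obtains \eqref{thm:cv1bis} $\Rightarrow$ \eqref{thm:cv2} from Proposition~\ref{prop:normproperties}, and \eqref{thm:cv2} $\Rightarrow$ \eqref{thm:cv3} from $\| \delta x \| \leqslant \| x \|$. The only difference is that you make explicit the bookkeeping (realizing $d_1$ as $d_{p, q_1}$ and identifying $\leq_{p, q_1}$ with $\leqslant_S$) that the paper leaves implicit.
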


\begin{proof}
The fact that $S$ is conditionally monotone-complete and the equivalences \eqref{thm:cv1} $\Leftrightarrow$ \eqref{thm:cv1bis0} $\Leftrightarrow$ \eqref{thm:cv1bis} $\Leftrightarrow$ \eqref{thm:cv3} are consequences of Theorem~\ref{thm:cvi}. 
The implication \eqref{thm:cv1bis} $\Rightarrow$ \eqref{thm:cv2} follows from Proposition~\ref{prop:normproperties}. 
The implication \eqref{thm:cv2} $\Rightarrow$ \eqref{thm:cv3} is clear from the inequality $\| \delta x \| \leqslant \| x \|$. 
\end{proof}

\begin{remark}
By the previous theorem, every Banach inverse semigroup is conditionally monotone-complete, hence \textit{mirror} in the sense of Poncet \cite[Definition~3.1]{Poncet12a}. 
\end{remark}

\section{The special case of Clifford monoids}\label{sec:clifford}

In this last section, we examine the correspondence between pseudo-norms and right-subinvariant pseudo-metrics for Clifford monoids. 

\begin{definition}\label{def:skew}
Let $S$ be a Clifford semigroup and $d$ be a pseudo-metric on $S$. 
We say that $d$ is \textit{skew-convex} if the following conditions hold:
\begin{enumerate}
  \item $d(x, y) \geqslant d(\delta x, \delta y)$, for all $x, y \in S$, and
  \item $d(x, z) = d(\delta x, \delta y) + d(x + y^*, z + y^*)$, for all $x, y, z \in S$ with $\delta x \leqslant_S \delta y \leqslant_S \delta z$. 
\end{enumerate}
\end{definition}

\begin{remark}
If $S$ reduces to a group, then the partial order $\leqslant_S$ agrees with equality, so radial-convexity is trivial, and right-subinvariance and skew-convexity coincide. 
If $S$ reduces to a sup-semilattice, then radial-convexity and skew-convexity coincide. 
\end{remark}

\begin{lemma}
Let $S$ be a Clifford semigroup equipped with a pseudo-metric $d$. 
If $d$ is skew-convex, then $d$ is radially-convex. 
\end{lemma}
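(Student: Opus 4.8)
The plan is to take an arbitrary chain $x \leqslant_S y \leqslant_S z$ and to rewrite each of $d(x,y)$, $d(y,z)$, $d(x,z)$ in terms of the idempotent images $\delta x, \delta y, \delta z$, so that radial-convexity falls out of the skew-convexity identity applied to the triple $(x,y,z)$.

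First I would record two elementary inverse-semigroup identities. If $a \leqslant_S b$, write $b = a + e$ with $e \in E(S)$; then $b^* = e + a^*$, whence $a + b^* = a + e + a^* = b + b^* = \delta b$ and, symmetrically, $b + a^* = a + e + a^* = \delta b$. In particular, with $(a,b) = (x,y)$ this gives $x + y^* = \delta y$, and with $(a,b) = (y,z)$ it gives $z + y^* = \delta z$. I would also note that, since $S$ is Clifford, the map $\delta$ is an endomorphism by Lemma~\ref{lem:clifford}, hence order-preserving for $\leqslant_S$: from $y = x + e$ we get $\delta y = \delta x + \delta e = \delta x + e$, so $x \leqslant_S y$ implies $\delta x \leqslant_S \delta y$. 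Thus $x \leqslant_S y \leqslant_S z$ yields $\delta x \leqslant_S \delta y \leqslant_S \delta z$, which is precisely the hypothesis required to invoke skew-convexity.

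The key step is to apply the skew-convexity identity to the degenerate triple $(x, y, y)$, which is legitimate because $\delta x \leqslant_S \delta y \leqslant_S \delta y$. It reads $d(x,y) = d(\delta x, \delta y) + d(x + y^*, y + y^*)$. Since $x + y^* = \delta y = y + y^*$, the second term equals $d(\delta y, \delta y) = 0$, so $d(x,y) = d(\delta x, \delta y)$. Running the same argument on $(y, z, z)$ produces $d(y,z) = d(\delta y, \delta z)$.

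Finally I would apply skew-convexity to $(x,y,z)$ itself: using $z + y^* = \delta z$, the identity becomes $d(x,z) = d(\delta x, \delta y) + d(\delta y, \delta z)$. Substituting the two equalities of the previous paragraph turns the right-hand side into $d(x,y) + d(y,z)$, which is exactly radial-convexity. I do not expect a serious obstacle here; the only non-routine point is recognizing that feeding the degenerate triple $(x,y,y)$ into the skew-convexity identity collapses it to $d(x,y) = d(\delta x, \delta y)$, after which the argument is pure bookkeeping on the natural partial order.
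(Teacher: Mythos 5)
Your proof is correct and follows essentially the same route as the paper's: both arguments feed degenerate triples into the skew-convexity identity (so that one of the two terms vanishes because $x + y^* = y + y^*$, resp.\ $y + z^* = z + z^*$) and then compare with the identity applied to $(x,y,z)$ itself. The only cosmetic difference is that you pivot $d(y,z)$ around the middle element $z$ and make the intermediate equalities $d(x,y) = d(\delta x, \delta y)$, $d(y,z) = d(\delta y, \delta z)$ explicit, whereas the paper pivots both terms around $y$ and matches the sums directly.
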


\begin{proof}
Let $x, y, z \in S$ with $x \leqslant_S y \leqslant_S z$. 
Then $\delta x \leqslant_S \delta y \leqslant_S \delta z$. 
Using skew-convexity, we have $d(x, z) = d(\delta x, \delta y) + d(x + y^*, z + y^*)$.
Using again skew-convexity, we also have 
\[
d(x, y) + d(y, z) = d(\delta x, \delta y) + d(x + y^*, y + y^*) + d(y + y^*, z + y^*). 
\]
Now, $x \leqslant_S y$ gives $x + y^* = y + y^*$. 
This yields $d(x, z) = d(x, y) + d(y, z)$, as required. 
%
\end{proof}

\begin{proposition}\label{prop:dclifford}
Let $S$ be a pseudo-normed Clifford semigroup. 
Then the pseudo-metrics $d_0$ and $d_1$ are right-subinvariant and skew-convex. 
\end{proposition}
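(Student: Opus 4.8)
The plan is to observe first that right-subinvariance requires no new work: it is exactly Proposition~\ref{prop:propd}\eqref{prop:propd2}, so the entire effort goes into verifying the two conditions of skew-convexity (Definition~\ref{def:skew}) for $d_0$ and $d_1$. Throughout I would exploit the Clifford hypotheses of Lemma~\ref{lem:clifford}: the map $\delta$ is an endomorphism, idempotents are central, and $\delta(y^*) = \delta y$, $(\delta x)^* = \delta x$, $\delta(\delta x) = \delta x$ for all $x, y \in S$. The single most useful consequence is the identity $\delta(x + y^*) = \delta x + \delta y$.

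For the first condition, $d(x, y) \geqslant d(\delta x, \delta y)$, I would note that $\delta(\delta x) = \delta x$, so $\|\delta(\delta x)\| = \|\delta x\|$, whence $d_i(x, y) - d_i(\delta x, \delta y) = \|x + y^*\| - \|\delta x + \delta y\|$ for $i \in \{0, 1\}$ (the subtracted correction terms cancel in each case). It then suffices to apply Proposition~\ref{prop:normproperties}\eqref{prop:normproperties3}, namely $\|\delta z\| \leqslant \|z\|$, to $z = x + y^*$, using $\delta(x + y^*) = \delta x + \delta y$; this gives $\|\delta x + \delta y\| \leqslant \|x + y^*\|$ and hence the required inequality.

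The second condition is where the computation lives. Fix $x, y, z$ with $\delta x \leqslant_S \delta y \leqslant_S \delta z$; since these are idempotents this means $\delta x + \delta y = \delta y$ and $\delta y + \delta z = \delta z$ in $E(S)$. The key algebraic simplification I would record is that $w + e = w$ whenever $e \leqslant_S \delta w$ (proof: $w = w + \delta w$ by Lemma~\ref{lem:clifford}\eqref{lem:clifford3}, and $\delta w + e = \delta w$). From this and the endomorphism property I would extract $\delta(x + y^*) = \delta y$, $\delta(z + y^*) = \delta z$, $\delta(x + z^*) = \delta z$, and, crucially, $x + y^* + y + z^* = x + \delta y + z^* = x + z^*$, the last step using $\delta y \leqslant_S \delta z = \delta(x + z^*)$. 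Feeding these into the definitions, and using that $\|\cdot\|$ is order-preserving along $\leqslant_S$ (Proposition~\ref{prop:normproperties}\eqref{prop:normproperties0} and \eqref{prop:normproperties2}) so that the minimum in $d_0$ and the average in $d_1$ resolve explicitly, both sides telescope to $\|x + z^*\| - \frac{\|\delta x\| + \|\delta z\|}{2}$ for $d_1$ and to $\|x + z^*\| - \|\delta x\|$ for $d_0$, matching $d_i(x, z)$.

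I expect the main obstacle to be bookkeeping rather than conceptual: one must carry the correction terms ($\|\delta x\| \wedge \|\delta y\|$ for $d_0$, the average for $d_1$) correctly through the telescoping, and confirm that the ordering $\delta x \leqslant_S \delta y \leqslant_S \delta z$ forces $\|\delta x\| \leqslant \|\delta y\| \leqslant \|\delta z\|$ so that each $\wedge$ selects the smaller norm. The delicate algebraic point is the reduction $x + y^* + y + z^* = x + z^*$, which is the place where the Clifford structure (centrality of $\delta y$ and the absorption $w + e = w$) is genuinely needed.
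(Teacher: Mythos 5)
Your proposal is correct and follows essentially the same route as the paper's proof: right-subinvariance is delegated to Proposition~\ref{prop:propd}\eqref{prop:propd2}, the first axiom of skew-convexity reduces to $\| \delta(x + y^*) \| \leqslant \| x + y^* \|$ with $\delta(x+y^*) = \delta x + \delta y$, and the second axiom is the same telescoping computation built on the absorption identity $x + y^* + y + z^* = x + z^*$ (the paper phrases it as $\delta y^* + z^* = z^*$). The only difference is cosmetic: you carry out the $d_0$ case explicitly where the paper says ``the proof for $d_0$ is similar.''
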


\begin{proof}
We already know from Proposition~\ref{prop:propd}\eqref{prop:propd2} that $d_0$ and $d_1$ are right-subinvariant. 
Let us show that $d_1$ is skew-convex (the proof for $d_0$ is similar). 
So let $x, y \in S$. 
We have $\| x + y^* \| \geqslant \| \delta(x + y^*) \| = \| \delta x + \delta y \|$. 
This yields $d_1(x, y) \geqslant d_1(\delta x, \delta y)$, so the first axiom of skew-convexity is fulfilled. 

Now, let $x, y, z \in S$ with $\delta x \leqslant_S \delta y \leqslant_S \delta z$. 
Then $\delta y^* + z^* = z^*$, and 
\begin{align*}
d_1(\delta x, \delta y)  &= \frac{\| \delta y \| - \| \delta x \| }{2} \\
d_1(x + y^*, z + y^*) &= \| x + \delta y^* + z^* \| - \frac{ \| \delta(x + y^*) \|  + \| \delta(z + y^*) \| }{2} \\
&= \| x + z^* \| - \frac{ \| \delta y \|  + \| \delta z \| }{2},
\end{align*}
so that 
\begin{align*}
d_1(x, z) &= \| x + z^* \| - \frac{ \| \delta x \| + \| \delta z \| }{2} \\
&= d_1(\delta x, \delta y) + d_1(x + y^*, z + y^*),
\end{align*}
as required. 
\end{proof}

The following result gives a converse statement to Proposition~\ref{prop:dclifford}. 

\begin{theorem}\label{thm:exist}
Let $(S, +, 0)$ be a Clifford monoid equipped with a right-subinvariant, skew-convex pseudo-metric $d$. 
Then the map $v : S \to \mathbb{R}_+$ defined by
\[
v(x) = { d(x, 0) + d(\delta x, 0) },
\]
for all $x \in S$, is a pseudo-norm on $S$ such that the metric topology and the norm topology coincide. 
Moreover, $v$ is a norm if and only if $d$ is a metric. 
\end{theorem}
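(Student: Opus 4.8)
The plan is to show first that $p\colon (x,y)\mapsto v(x+y^{*})$ is a partial pseudo-metric on $S$ (which is precisely what it means for $v$ to be a pseudo-norm), then that the two topologies coincide, and finally to read off the ``norm if and only if metric'' clause from Theorem~\ref{thm:pseudonorm}. Throughout I would work with two auxiliary quantities: the restriction $\phi\colon E(S)\to\mathbb{R}_+$, $\phi(e)=d(e,0)$, and the \emph{displacement} $m(u)=d(u,\delta u)$. Since $S$ is a Clifford monoid, $\delta$ is an endomorphism, $\delta(x+y^{*})=\delta x+\delta y$, $\delta(\delta u)=\delta u$, and $0$ is the least idempotent, so $0\leqslant_{S}\delta u$ for all $u$. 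The structural backbone is the decomposition
\[
d(u,0)=\phi(\delta u)+m(u),
\]
obtained by applying the second skew-convexity axiom to the triple $(0,u,u)$ (whose $\delta$-images form the chain $0\leqslant_{S}\delta u\leqslant_{S}\delta u$); it gives $v(u)=2\phi(\delta u)+m(u)$ and $v(\delta u)=2\phi(\delta u)$. Two consequences of right-subinvariance are $m(u)=m(u^{*})$ (add $u^{*}$ on the right to both entries of $d(u,\delta u)$, using $u+u^{*}=\delta u$ and $\delta u+u^{*}=u^{*}$) and $m(u+e)\leqslant m(u)$ for $e\in E(S)$, while skew-convexity yields the useful equality $d(u,u+e)=d(\delta u,\delta u+e)$.

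With these tools, symmetry of $p$ reduces to $m(u)=m(u^{*})$, and the self-distance inequality $p(x,x)\leqslant p(x,y)$ to $2\phi(\delta x)\leqslant d(x+y^{*},0)+\phi(\delta x+\delta y)$, which follows from $d(x+y^{*},0)\geqslant\phi(\delta x+\delta y)$ (as $m\geqslant 0$) and $\phi(\delta x)\leqslant\phi(\delta x+\delta y)$ (the latter by order-preservation of $\phi$, a consequence of radial-convexity, which skew-convexity implies). The main obstacle is submodularity,
\[
v(x+y^{*})+v(\delta z)\leqslant v(x+z^{*})+v(z+y^{*}).
\]
Writing $e=\delta x$, $f=\delta y$, $g=\delta z$ and substituting the decomposition, this is equivalent to
\[
2\phi(e\vee f)+2\phi(g)+m(x+y^{*})\leqslant 2\phi(e\vee g)+2\phi(f\vee g)+m(x+z^{*})+m(z+y^{*}).
\]
I would assemble it from three inequalities. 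First, a ``subadditivity with a twist'' for $m$: from right-subinvariance (adding $z+y^{*}$, respectively $\delta x$, on the right) together with the identity $(x+z^{*})+(z+y^{*})=(x+y^{*})+\delta z$, one gets $m\bigl((x+y^{*})+\delta z\bigr)\leqslant m(x+z^{*})+m(z+y^{*})$. Second, the triangle inequality combined with $d(u,u+e)=d(\delta u,\delta u+e)$ gives $m(x+y^{*})\leqslant m\bigl((x+y^{*})+\delta z\bigr)+2\,d(e\vee f,e\vee f\vee g)$. Third, I would prove that $\phi$ is an upper valuation on the sup-semilattice $E(S)$ in the sharpened form
\[
\phi(e\vee f)+\phi(g)+d(e\vee f,e\vee f\vee g)\leqslant\phi(e\vee g)+\phi(f\vee g),
\]
which is where right-subinvariance (to obtain $d(e\vee g,e\vee f\vee g)\leqslant d(g,f\vee g)$) and radial-convexity combine, after expanding $\phi(e\vee g)$, $\phi(f\vee g)$ and $\phi(e\vee f)$ along chains through $g$. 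Doubling the third inequality and feeding in the first two closes the estimate. I expect the delicate point to be exactly this bookkeeping: the correction term $d(e\vee f,e\vee f\vee g)$ produced by the second step must be absorbed precisely by the slack in the submodularity of $\phi$.

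For the coincidence of topologies, I would use the convergence criterion of Theorem~\ref{thm:dist1}: $x_n\to x$ in the norm topology iff $v(x_n+x^{*})\to v(\delta x)$ and $v(\delta x_n)\to v(\delta x)$, that is, iff $d(x_n+x^{*},0)\to\phi(\delta x)$, $d(\delta x_n+\delta x,0)\to\phi(\delta x)$ and $d(\delta x_n,0)\to\phi(\delta x)$ (the first two being legitimate because each term is bounded below by $\phi(\delta x)$, so their sum tends to $2\phi(\delta x)$ iff each tends to $\phi(\delta x)$). If $d(x_n,x)\to 0$, then right-subinvariance gives $x_n+x^{*}\to\delta x$ and skew-convexity gives $\delta x_n\to\delta x$, so all three convergences follow from Lipschitz continuity of $d(\cdot,0)$. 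Conversely these three convergences force, via radial-convexity, $m(x_n+x^{*})\to 0$ and $\delta x_n\to\delta x$; then, setting $y_n=x_n+\delta x=x_n+x^{*}+x$, I would check through $d(u,u+e)=d(\delta u,\delta u+e)$ that $d(x_n,y_n)\to 0$ and, through the skew-convexity decomposition of $d(x,y_n)$, that $d(y_n,x)\to 0$, whence $d(x_n,x)\to 0$.

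Finally, once the metric and norm topologies agree, the last assertion is immediate: $d$ is a metric iff $(S,d)$ is $T_0$, iff $(S,d_1)$ is $T_0$ (same topology), iff $v$ is a norm, the last equivalence being Theorem~\ref{thm:pseudonorm}, which applies since a pseudo-norm on a Clifford semigroup is weakly permutable.
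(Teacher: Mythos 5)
Your proposal is correct, but it reaches the result by a genuinely different route than the paper. The paper first replaces $d$ by the equivalent pseudo-metric $d'(x,y)=d(x,y)+d(\delta x,\delta y)$, so that $v(x)=d'(x,0)$, proves $v(x)=v(x^*)$ and order-preservation of $v$ up front, and then verifies the three axioms by direct manipulation of $d'$; in particular, submodularity is obtained in one stroke by introducing the auxiliary element $t:=x+z^*+z$ and chaining the triangle inequality, right-subinvariance, and skew-convexity, and both the coincidence of topologies and the ``norm iff metric'' clause are proved by hand. You instead split $d(u,0)=\phi(\delta u)+m(u)$ with $\phi=d(\cdot,0)|_{E(S)}$ and $m(u)=d(u,\delta u)$, so $v(u)=2\phi(\delta u)+m(u)$, and reduce every axiom to properties of the two pieces: $m$ is $*$-invariant and subadditive along the factorization $(x+z^*)+(z+y^*)=(x+y^*)+\delta z$, while $\phi$ satisfies the strengthened upper-valuation inequality $\phi(e\vee f)+\phi(g)+d(e\vee f,e\vee f\vee g)\leqslant\phi(e\vee g)+\phi(f\vee g)$ (equivalently $\phi(e\vee f\vee g)+\phi(g)\leqslant\phi(e\vee g)+\phi(f\vee g)$, which follows from radial-convexity plus $d(e\vee g,e\vee f\vee g)\leqslant d(g,f\vee g)$). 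I checked the delicate bookkeeping: the correction term $2d(e\vee f,e\vee f\vee g)$ produced by your triangle-inequality step for $m$ is exactly absorbed by the slack in the doubled inequality for $\phi$, and the identity $d(u,u+e)=d(\delta u,\delta u+e)$ does follow from skew-convexity applied to the triple $(u,u,u+e)$. Your convergence argument and the derivation of the last clause from Theorem~\ref{thm:pseudonorm} (legitimate since a Clifford pseudo-norm is weakly permutable) are also sound. What each approach buys: the paper's is shorter and self-contained; yours is more structural, exhibiting the norm as an upper valuation on $E(S)$ glued to a group-like displacement, which makes the group and sup-semilattice special cases transparent and gets the final equivalence for free from earlier results. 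The only blemish is notational: in your subadditivity step for $m$ the element added on the right should be $\delta(x+z^*)=\delta x+\delta z$ rather than $\delta x$, which does not affect the argument.
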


\begin{proof}
We first prove that $v(x) = v(x^*)$, for all $x \in S$. 
Let $x \in S$. 
Using skew-convexity and the fact that $0 \leqslant_S \delta x$, we have 
$d(0, x) = d(0, \delta x) + d(0 + \delta x, x + \delta x)$, i.e.\ $v(x) = v(\delta x) + d(\delta x, x)$. 
Recalling that $S$ is Clifford, we also have $d(0, x^*) = d(0, \delta x^*) + d(0 + x, x^* + x)$, i.e.\ $v(x^*) = v(\delta x^*) + d(x, \delta x^*) = v(\delta x) + d(x, \delta x)$. 
This proves that $v(x) = v(x^*)$, as required. 

We now show that $v$ is order-preserving. 
Let $x, y \in S$ with $x \leqslant_S y$. 
Using the triangle inequality, we have $d(0, x) \leqslant d(0, \delta y^*) + d(\delta y^*, x)$, hence $v(x) \leqslant d(0, \delta x) + d(\delta y^*, x) + d(0, \delta y^*)$. 
Moreover, by skew-convexity, 
$d(0, y^*) = d(0, \delta x) + d(0 + x, y^* + x)$. 
Since $x \leqslant_S y$, we have $y^* + x = \delta y^*$. 
Thus, $v(y^*) = d(0, y^*) + d(0, \delta y^*) = d(0, \delta x) + d(x, \delta y^*) + d(0, \delta y^*)$. 
This yields $v(x) \leqslant v(y^*) = v(y)$, as required. 

Now let us consider the map $d'$ defined by $d'(x, y) = d(x, y) + d(\delta x, \delta y)$, for all $x, y \in S$. 
It is straightforward to show that $d'$ is a right-subinvariant, skew-convex pseudo-metric on $S$. 
Moreover, $d \leqslant d' \leqslant 2 d$, so $d$ and $d'$ are equivalent: they generate the same topology, and $d$ is a metric if and only if $d'$ is a metric. 
All in all, we can now reason on $d'$ only. 
Note also that $v(x) = d'(x, 0)$, for all $x \in S$. 

It is easily seen that $v$ is subadditive thanks to right-subinvariance: 
\begin{align*}
v(x + y) = d'(x + y, 0) &\leqslant d'(x + y, y) + d'(y, 0) \\
&\leqslant d'(x, 0) + d'(y, 0) = v(x) + v(y),
\end{align*}
for all $x, y \in S$. 
However, this is not sufficient to prove that $v$ is a pseudo-norm. 
We shall prove one by one the three axioms listed in Definition~\ref{def:pseudonorm}. 

(Axiom~$1$). 
We already know that $v(x) = v(x^*)$, for all $x \in S$. 
Thus, $v(x + y^*) = v(y + x^*)$, for all $x, y \in S$. 

(Axiom~$2$). 
Let $x, y \in S$, and take $z := x + y^*$. 
Then $v(z) \geqslant v(\delta z)$. 
Moreover, $\delta x \leqslant_S \delta z$. 
Since $v$ is order-preserving, this yields $v(z) \geqslant v(\delta x)$. 
Thus, $v(\delta x) \leqslant v(x + y^*)$. 

(Axiom~$3$). 
Let $x, y, z \in S$, and take $t := x + z^* + z$.  
We have 
\[
v(t + y^*) = d'(t + y^*, 0) \leqslant d'(t + y^*, z + y^*) + d'(z + y^*, 0). 
\]
With the right-subinvariance of $d'$, we get $v(t + y^*) \leqslant d'(t, z) + v(z + y^*)$. 
Thus, $v(t + y^*) + v(z + z^*) \leqslant d'(t, z) + d'(\delta z, 0) + v(z + y^*)$. 
Note that $0 \leqslant_S \delta z \leqslant_S \delta t$, since $S$ is Clifford; using skew-convexity, we deduce that $d'(t, z) + d'(\delta z, 0) = d'(t + z^*, 0) = v(t + z^*) = v(x + z^*)$. 
This gives 
\[
v(t + y^*) + v(z + z^*) \leqslant v(x + z^*) + v(z + y^*). 
\]
Now $v$ is order-preserving and $x + y^* \leqslant t + y^*$, so $v(x + y^*) \leqslant v(t + y^*)$. 
So $v$ is submodular. 

We have proved the three axioms of Definition~\ref{def:pseudonorm}; moreover, $v(0) = 0$, so $v$ is indeed a pseudo-norm on the inverse monoid $S$. 

Now we show that the metric topology generated by $d'$ and the norm topology generated by $v$ coincide. 
Note that $v$ is continuous for the metric topology since, by the triangle inequality, we have 
\[
| v(x) - v(y) | \leqslant d'(x, y),
\]
for all $x, y \in S$. 
Let $(x_n)_n$ be a net that tends to some $x \in S$ for the metric topology, i.e.\ $d'(x_n, x) \to 0$. 
Since $d'$ is right-subinvariant, $d'(x_n, x) \geqslant d'(x_n + x^*, \delta x)$, so $x_n + x^* \to \delta x$. 
By continuity of $v$, we deduce that $v(x_n + x^*) \to v(\delta x)$. 
Moreover, 
\[
v(x + x_n^*) - d'(x_n, x) \leqslant v(\delta x_n) \leqslant d'(x_n, x) + v(x + x_n^*), 
\]
so that $v(\delta x_n) \to v(\delta x)$. 
By Theorem~\ref{thm:dist1}, this proves that $(x_n)_n$ tends to $x$ with respect to the norm topology generated by $v$. 

Conversely, let $(x_n)_n$ be a net that tends to some $x \in S$ for the norm topology, i.e.\ $v(x_n + x^*) \to v(\delta x)$ and $v(\delta x_n) \to v(\delta x)$. 
By the triangle inequality, we have $d'(x_n, x) \leqslant A_n + B_n$, where $A_n := d'(x_n, x + \delta x_n)$ and $B_n := d'(x + \delta x_n, x)$. 
Using right-subinvariance, 
\begin{align*}
A_n &= d'(\delta x_n + x_n, x + x_n^* + x_n) \\
&\leqslant d'(\delta x_n, x + x_n^*) \\
&\leqslant d'(\delta x_n, \delta x) + d'(\delta x, x + x_n^*),
\end{align*}
and 
\begin{align*}
B_n &= d'(x + \delta x_n + x^* + x, \delta x + x) \\
&\leqslant d'(x + \delta x_n + x^*, \delta x) \\
&= d'(\delta x_n + \delta x, \delta x + \delta x) \\
&\leqslant d'(\delta x_n, \delta x). 
\end{align*}
It remains to show that $d'(\delta x_n, \delta x) \to 0$ and $d'(\delta x, x + x_n^*) \to 0$. 
By skew-convexity, $v(x_n + x^*) = v(\delta x) + d'(\delta x, x_n + x^*)$, hence $d'(\delta x, x + x_n^*) \to 0$. 
Also, by radial-convexity,
\begin{align*}
d'(\delta x_n, \delta x) &\leqslant d'(\delta x_n, \delta x_n + \delta x) + d'(\delta x_n + \delta x, \delta x) \\
&= 2 v(\delta x_n + \delta x) - v(\delta x_n) - v(\delta x) \\
&\leqslant 2 v(x_n + x^*) - v(\delta x_n) - v(\delta x),
\end{align*}
and this latter term tends to $0$. 

Suppose that $d'$ is a metric, and let us show that $v$ is a norm. 
So suppose that $v(x) = v(e)$, for some $x \in S_e$ and $e \in E(S)$. 
Then $d'(0, x) = d'(0, e)$. 
Using skew-convexity, this amounts to $d'(0, \delta x) + d'(\delta x, x) = d'(0, e)$. 
Since $0 \leqslant_S e \leqslant_S \delta x$ and $d'$ is radially-convex, we have $d'(0, \delta x) = d'(0, e) + d'(e, \delta x)$, so that $d'(e, \delta x) + d'(\delta x, x) = 0$. 
Since $d'$ is a metric, this gives $e = \delta x = x$, as required. 

Now suppose that $v$ is a norm, and let us show that $d'$ is a metric. 
So suppose that $d'(x, y) = 0$, for some $x, y \in S$. 
Take $z := x + y^*$. 
By skew-convexity, $d'(0, z) = d'(0, \delta y) + d'(0 + \delta y, z + \delta y)$, so $d'(\delta y, z) = v(z) - v(\delta y)$. 
By right-subinvariance, $d'(\delta y, z) \leqslant d'(y, x) = 0$, hence $v(z) = v(\delta y)$. 
Since $v$ is a norm and $z \in S_{\delta y}$, we obtain $z = \delta y$, i.e.\ $x + y^* = \delta y$. 
This proves that $x \leqslant_S y$. 
Similarly, we have $y \leqslant_S x$, so $x = y$. 
Thus, $d'$ is a metric. 
\end{proof}

\begin{corollary}
Let $(G, +, 0)$ be a group equipped with a right-invariant pseudo-metric $d$. 
Then the map $v : G \to \mathbb{R}_+$ defined by
$
v(x) = { d(x, 0) }
$, 
for all $x \in G$, is a pseudo-norm on $G$ such that the metric topology and the norm topology coincide. 
Moreover, $v$ is a norm if and only if $d$ is a metric. 
\end{corollary}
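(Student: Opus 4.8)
The plan is to recognize this corollary as the specialization of Theorem~\ref{thm:exist} to the case where the Clifford monoid is a group $G$. First I would record the basic simplifications that occur in a group. Every group $(G, +, 0)$ is a Clifford monoid whose inverse operation is $x^* = -x$, so that $\delta x = x + x^* = 0$ for all $x \in G$. Consequently the only idempotent element is $0$, and the natural partial order $\leqslant_S$ collapses to equality, since $x \leqslant_S y$ means $y = x + e$ for some idempotent $e$, forcing $e = 0$ and $y = x$.

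Next I would verify that a right-invariant pseudo-metric $d$ on $G$ meets the two hypotheses of Theorem~\ref{thm:exist}, namely right-subinvariance and skew-convexity. Right-invariance reads $d(x - y, z - y) = d(x, z)$, that is $d(x + y^*, z + y^*) = d(x, z)$; in particular the inequality defining right-subinvariance holds, with equality. For skew-convexity I would check the two conditions of Definition~\ref{def:skew}. The first, $d(x, y) \geqslant d(\delta x, \delta y)$, becomes $d(x, y) \geqslant d(0, 0) = 0$ and is automatic. In the second, the premise $\delta x \leqslant_S \delta y \leqslant_S \delta z$ is the trivially true statement $0 \leqslant_S 0 \leqslant_S 0$, and the asserted equality $d(x, z) = d(\delta x, \delta y) + d(x + y^*, z + y^*)$ reduces to $d(x, z) = 0 + d(x + y^*, z + y^*)$, which is again right-invariance. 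Thus both axioms follow directly from the collapse $\delta x = 0$.

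Having checked the hypotheses, I would apply Theorem~\ref{thm:exist} verbatim. The pseudo-norm it produces is $v(x) = d(x, 0) + d(\delta x, 0) = d(x, 0) + d(0, 0) = d(x, 0)$, since $d$ is a pseudo-metric and hence $d(0,0) = 0$; this matches the formula in the statement. The theorem then delivers at once that $v$ is a pseudo-norm, that the metric topology and the norm topology coincide, and that $v$ is a norm if and only if $d$ is a metric, which is exactly the conclusion of the corollary.

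I do not anticipate any real obstacle, as every step reduces to the identity $\delta x = 0$ that holds in any group. The only point meriting a word of care is confirming that right-invariance supplies the \emph{equality} versions of both the right-subinvariance and the skew-convexity axioms; this is immediate from $d(x + y^*, z + y^*) = d(x, z)$, so the corollary is a genuine direct consequence of Theorem~\ref{thm:exist} rather than requiring any fresh argument.
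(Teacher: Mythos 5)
Your proposal is correct and follows exactly the route the paper intends: the corollary is stated as an immediate specialization of Theorem~\ref{thm:exist}, using $\delta x = 0$, the collapse of $\leqslant_S$ to equality, and the observation (made in the paper's remark at the start of Section~\ref{sec:clifford}) that for groups right-subinvariance and skew-convexity both reduce to right-invariance. Your verification of the two skew-convexity axioms and the simplification $v(x) = d(x,0) + d(0,0) = d(x,0)$ is precisely the intended argument.
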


\begin{corollary}
Let $(S, \vee, 0)$ be a sup-semilattice with $0$ equipped with a right-subinvariant, radially-convex pseudo-metric $d$. 
Then the map $v : S \to \mathbb{R}_+$ defined by
$
v(x) = { d(x, 0) }
$, 
for all $x \in S$, is a pseudo-norm on $S$ such that the metric topology and the norm topology coincide. 
Moreover, $v$ is a norm if and only if $d$ is a metric. 
\end{corollary}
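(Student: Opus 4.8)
The plan is to deduce this from Theorem~\ref{thm:exist} after making explicit the inverse-monoid structure carried by a sup-semilattice. Viewing $(S, \vee, 0)$ as a commutative inverse monoid with $x^* = x$ and $x + y = x \vee y$, we have $\delta x = x + x^* = x \vee x = x$ for every $x$, so the natural order $\leqslant_S$ coincides with the sup-semilattice order $\leqslant$. As recorded in the remark following Definition~\ref{def:skew}, for a sup-semilattice skew-convexity and radial-convexity are the same property; hence the assumption that $d$ is right-subinvariant and radially-convex is exactly the hypothesis of Theorem~\ref{thm:exist}.

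Applying that theorem, the map $\tilde{v}(x) := d(x, 0) + d(\delta x, 0) = 2\, d(x, 0)$ is a pseudo-norm on $S$ whose norm topology coincides with the metric topology of $d$, and $\tilde{v}$ is a norm precisely when $d$ is a metric. Since the map $v$ of the statement is the positive scalar multiple $v = \tfrac{1}{2}\,\tilde{v}$, it remains only to observe that all the relevant properties are invariant under scaling by a positive constant. Indeed, the three axioms of Definition~\ref{def:pseudonorm} together with $v(0) = 0$ are preserved when $\tilde{v}$ is replaced by $\tfrac{1}{2}\,\tilde{v}$, so $v$ is again a pseudo-norm; the intrinsic metric $d_1$ built from $v$ equals $\tfrac{1}{2}$ times the one built from $\tilde{v}$, so the two norm topologies coincide and thus the norm topology of $v$ agrees with the metric topology of $d$; and the separation condition is likewise scale-invariant, so $v$ is a norm if and only if $\tilde{v}$ is, i.e.\ if and only if $d$ is a metric.

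For completeness, the least transparent axiom---submodularity---can also be verified directly, which both confirms the computation and isolates where the two hypotheses enter. Order-preservation of $v$ follows from radial-convexity: if $x \leqslant y$ then $0 \leqslant x \leqslant y$ gives $v(y) = v(x) + d(x, y) \geqslant v(x)$. For submodularity one combines, for arbitrary $x, y, z$, the order bound $v(x \vee y) \leqslant v(x \vee y \vee z)$ with the identity $d(x \vee y \vee z, y \vee z) = v(x \vee y \vee z) - v(y \vee z)$ (radial-convexity along $0 \leqslant y \vee z \leqslant x \vee y \vee z$) and with $d(x \vee y \vee z, y \vee z) = d((x \vee z) \vee y, z \vee y) \leqslant d(x \vee z, z) = v(x \vee z) - v(z)$ (right-subinvariance, then radial-convexity along $0 \leqslant z \leqslant x \vee z$); chaining these inequalities yields $v(x \vee y) + v(z) \leqslant v(x \vee z) + v(z \vee y)$.

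The only genuine subtlety is the factor of $2$ produced by $\delta x = x$: Theorem~\ref{thm:exist} delivers $2\,d(x,0)$ rather than $d(x,0)$, so the main point to get right is that rescaling by a positive constant preserves the pseudo-norm and norm properties as well as the induced topology. Everything else is a direct specialization of the already-proved theorem.
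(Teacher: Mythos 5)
Your proposal is correct and follows the route the paper intends: the corollary is stated without proof as a direct specialization of Theorem~\ref{thm:exist} to the Clifford monoid $(S,\vee,0)$ with $x^*=x$, where skew-convexity reduces to radial-convexity. You rightly flag the one point the paper glosses over --- since $\delta x = x$ here, the theorem actually produces $2\,d(x,0)$ rather than $d(x,0)$ --- and your observation that every relevant property (the three pseudo-norm axioms, the separation condition, and the induced topology via $d_1$) is invariant under positive rescaling cleanly closes that gap; the supplementary direct verification of submodularity from right-subinvariance and radial-convexity is also correct.
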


\section{Conclusion and perspectives}

By the Birkhoff--Kakutani theorem, every first-countable Hausdorff topological group has a right-invariant metric compatible with the topology, see Birkhoff \cite{Birkhoff36}, Kakutani \cite{Kakutani36}. 
As a next step, we shall investigate possible generalizations of this result to topological inverse semigroups. 

We also think of generalizing further the concepts of this paper and define norms on regular involutive semigroups. 
In this process, we would certainly need to replace idempotent elements by \textit{projections}, i.e.\ elements $e$ of the regular involutive semigroup such that $e + e = e$ and $e^* = e$. 

Another perspective is to apply normed inverse semigroups and normed sup-semilattices to idempotent analysis, as explained in the Introduction. 

\bibliographystyle{plain}

\def\cprime{$'$} \def\cprime{$'$} \def\cprime{$'$} \def\cprime{$'$}
  \def\ocirc#1{\ifmmode\setbox0=\hbox{$#1$}\dimen0=\ht0 \advance\dimen0
  by1pt\rlap{\hbox to\wd0{\hss\raise\dimen0
  \hbox{\hskip.2em$\scriptscriptstyle\circ$}\hss}}#1\else {\accent"17 #1}\fi}
  \def\ocirc#1{\ifmmode\setbox0=\hbox{$#1$}\dimen0=\ht0 \advance\dimen0
  by1pt\rlap{\hbox to\wd0{\hss\raise\dimen0
  \hbox{\hskip.2em$\scriptscriptstyle\circ$}\hss}}#1\else {\accent"17 #1}\fi}

\end{document}